\crefname{hypothesis}{Hypothesis}{Hypotheses}
\title{Geometric Quasilinearization Framework for Analysis and Design of 
	Bound-Preserving Schemes}
\author{Kailiang Wu\thanks{Department of Mathematics, Southern University of Science and Technology, Shenzhen, Guangdong 518055, China ({\tt wukl@sustech.edu.cn}). K.~Wu is supported in part by NSFC grant 12171227.}
\and Chi-Wang Shu\thanks{Division of Applied Mathematics, Brown University, Providence, RI 02912, USA
  ({\tt chi-wang\_shu@brown.edu}). C.-W.~Shu is supported in part by NSF grant DMS-2010107 and AFOSR grant FA9550-20-1-0055.}
}
\DeclareMathAlphabet\mathbfcal{OMS}{cmsy}{b}{n}
\def\jump#1{\llbracket #1 \rrbracket }
\begin{document}

\maketitle

\begin{abstract}
Solutions to many partial differential equations satisfy certain bounds or constraints. For example, the density and pressure are positive for equations of fluid dynamics, and in the relativistic case the fluid velocity is upper bounded by the speed of light, etc. As widely realized, it is crucial to develop bound-preserving numerical methods that preserve such intrinsic constraints. Exploring provably bound-preserving schemes has attracted much attention and is actively studied in recent years. This is however still a challenging task for many systems especially those involving nonlinear constraints. 

Based on some key insights from geometry, we systematically propose an innovative and general framework, referred to as geometric quasilinearization (GQL), which paves a new effective way for studying bound-preserving problems with nonlinear constraints. The essential idea of GQL is to {\em equivalently} transfer all nonlinear constraints into {\em linear} ones, through properly introducing some free auxiliary variables. We establish the fundamental principle and general theory of GQL via the geometric properties of convex regions, and propose three simple effective methods for constructing GQL. We apply the GQL approach to a variety of partial differential equations, and demonstrate its effectiveness and remarkable advantages for studying bound-preserving schemes, by diverse challenging examples and applications which cannot be easily handled by direct or traditional approaches.

\end{abstract}

\begin{keywords}
	Geometric quasilinearization, nonlinear constraints, 
bound-preserving numerical schemes,  time-dependent PDE systems, convex invariant regions, hyperbolic conservation laws
\end{keywords}

\begin{AMS}
  65M08, 65M60, 65M12, 65M06, 35L65
\end{AMS}

\section{Introduction}
Solutions to many partial differential equations (PDEs) satisfy certain algebraic constraints, which are usually derived from some (physical) bound principles, for example,  
the positivity of density and pressure. 
Consider such time-dependent PDE systems in a general form 
\begin{equation}\label{eq:gPDE}
		\partial_t {\bf u} +  \mathbfcal{L} ({\bf u}) = {\bf 0}, ~~~
		\qquad {\bf u}( {\bm x}, 0) = {\bf u}_0 ( {\bm x} ), 
\end{equation} 
where $\mathbfcal{L}$ denotes 
the differential operator associated with the spatial coordinates ${\bm x}$, 
and suppose the system \cref{eq:gPDE} is defined in a bounded domain with 
 suitable boundary conditions.  
An important class of such systems, which we are particularly interested in, are the hyperbolic conservation laws: 
\begin{equation}\label{eq:hPDE}
		\partial_t {\bf u} +  \nabla \cdot {\bf f} ( {\bf u} ) = {\bf 0}, 
		\qquad  {\bf u}( {\bm x}, 0) = {\bf u}_0 ( {\bm x} ), 
\end{equation}
and other related hyperbolic or convection dominated equations. 

Assume that the algebraic constraints (bound principles)  
can be expressed by either the positivity or the non-negativity of 
several (linear or nonlinear) functions of ${\bf u}$ as  
\begin{equation}\label{eq:GenPC}
g_i ({\bf u}) >0~~\forall i \in \mathbb I, \qquad g_i({\bf u})\ge 0~~\forall i \in \widehat {\mathbb I}, 	
\end{equation}
where $\mathbb I \cup \widehat {\mathbb I} =\{ 1,\dots,I \} $ with the positive integer $I$ denoting the total number of the constraints. 
In other words, the evolved variables ${\bf u}=(u_1,\dots,u_N)^\top$ belong to  
the admissible state set: 
\begin{equation}\label{eq:ASS-G}
	G = \left \{ {\bf u}  \in \mathbb R^N:~g_i ({\bf u}) >0~~\forall i \in \mathbb I, \quad g_i({\bf u})\ge 0~~\forall i \in \widehat {\mathbb I} \right \}. 
\end{equation}
Throughout this paper, we assume 
$G$ is convex, which is valid for many physical systems (several typical examples will be given in \cref{sec:Examples}). 
It is worth noting that the functions $\{g_i ({\bf u}), 1\le i \le I\}$ are {\em not} necessarily concave (and {\em not} required to be concave in this paper). 
Moreover, we assume that $G$ is an {\it invariant region}  
for the exact solution of the system \cref{eq:gPDE}, namely, 
\begin{itemize}
	\item  If ${\bf u} ( {\bm x}, 0 ) \in G $ for all $\bm x$, then 
	${\bf u} ( {\bm x}, t ) \in G $ for all ${\bm x}$ and $t>0$. 
\end{itemize}

A basic goal 
behind the design of numerical methods solving \cref{eq:gPDE}  
is that they can inherit 
as much as possible  
the intrinsic properties of the system \cref{eq:gPDE}.  
The constraints \cref{eq:GenPC} 
and the associated invariant region $G$  
 carry important properties of the exact solution. 
It is natural and meaningful to explore bound-preserving schemes   
that keep the numerical solutions within the region $G$:  
\begin{itemize}
	\item If ${\bf u}_h ( \cdot , t_0 ) \in G $, then 
	${\bf u}_h ( \cdot , t_n ) \in G $ for all $n \in \mathbb N$, 
\end{itemize}
where ${\bf u}_h ( \cdot, t_n ) $ denotes the numerical solutions at $n$th time level. 
In fact, preserving such constraints is not only necessary for physical significance, but also very crucial for theoretical analysis and  numerical stability. 
If any of the intrinsic physical constraints \cref{eq:GenPC} are violated numerically, the PDE system \cref{eq:gPDE} and its discrete  equations may become ill-posed 
outside the physical regimes. 
For example, when negative density and/or pressure 
are produced in numerically solving the compressible Euler equations, 
the key hyperbolicity of the system would be lost. 
As a result,  
failure to preserve such physically relevant constraints may cause serious numerical problems, for example, nonlinear instability, nonphysical solutions or phenomena, blowups of the code, etc. 
Therefore, it is significant and 
highly desirable to develop bound-preserving schemes.



In the past decades, the exploration of 
bound-preserving 
high-order numerical methods has attracted extensive attention and is actively studied, especially for hyperbolic and convection dominated equations  (e.g.~\cite{perthame1996positivity,zhang2010,zhang2010b,zhang2011,zhang2012maximum,zhang2013maximum,Xu2014,Hu2013,Wu2017a,WuShu2020NumMath}), 
and recently for some other types of time-dependent PDEs (e.g.~\cite{ShenXu2020,cheng2020global,DuJuLiQiao2021,JU2021110405,li2021stabilized}). 
For example, a general framework was established in \cite{zhang2010,zhang2010b} 
for constructing bound-preserving high-order finite volume and discontinuous Galerkin schemes for scalar conservation laws and compressible Euler equations. 
A key step in this framework is to look for high-order schemes that have  
a provable ``weak'' bound-preserving property keeping the 
cell averages of the numerical solutions in the region $G$. 
Once such a property is proven, a simple scaling 
limiter can be used to enforce the constraints  for the numerical solutions at any specified points \cite{zhang2010,zhang2010b,zhang2012maximum}. 
The idea of this methodology has been applied to many other hyperbolic or convection dominated systems; see, for example, \cite{xing2010positivity,zhang2011,cheng2012positivity,rossmanith2011positivity,cheng2013positivity,cheng2014positivity,QinShu2016,yuan2016high,ZHANG2017301,WuTang2017ApJS,JIANG2018,du2019high,wu2021uniformly}. 
Another bound-preserving framework \cite{Xu2014,Hu2013,liang2014parametrized} 
is built on  
flux-correction limiters, which modify any high-order numerical fluxes to enforce 
the constraints by combining a provably bound-preserving (lower-order) numerical flux as the building block. 
This approach has also been applied to various physical systems (cf.~\cite{christlieb2015high,christlieb2015positivity,xiong2015high,WuTang2015,xiong2016parametrized,Wu2017}). 
Recently, continuous finite element approximations 
with convex limiting were developed in \cite{guermond2016invariant,guermond2017invariant,guermond2018second} 
to preserve 
invariant regions for hyperbolic equations. 
Thorough reviews on bound-preserving efforts 
can be found in 
the survey articles 
\cite{xu2017bound,Shu2018}. 


Yet, due to the lack of a general theory, how to rigorously 
analyze or prove whether a numerical scheme is genuinely 
bound-preserving remains a challenging task. 
Despite the success of the limiter-based frameworks (cf.~\cite{zhang2010,zhang2010b,Xu2014,Hu2013}) in constructing high-order bound-preserving schemes, 
the validity of those limiters is actually based on 
some (weak or lower-order) bound-preserving properties of 
the cell-average schemes and/or of the numerical fluxes as the key building blocks. 
Proving such properties is therefore necessary, but often very 
difficult \cite{Shu2018,Wu2017a,WuShu2020NumMath}. 
To illustrate the challenges, we suppose that a numerical scheme for \cref{eq:gPDE} may be written as 
\begin{equation}\label{eq:Ascheme}
{\bf u}_{j}^{n+1} = {\mathbfcal {E}}_h 
( {\bf u}_{j-k}^{n}, {\bf u}_{j-k+1}^{n}, \dots, {\bf u}_{j}^{n}, \dots 
 {\bf u}_{j+s-1}^{n}, {\bf u}_{j+s}^{n} ),
\end{equation}
where ${\mathbfcal {E}}_h$ is the discretization operator, 
the superscripts on $\bf u$ denote the time levels, and the subscripts on $\bf u$ 
indicate the indexes of the spatial grid or nodal points.  
The bound-preserving problem for the scheme \cref{eq:Ascheme} can boil down to 
answer
\begin{equation*}
	\mbox{ whether } ~~ {\bf u}_{j}^{n} \in G~~ \forall j 
	~~\mbox{ implies  } ~~ {\bf u}_{j}^{n+1} \in G~~ \forall j ~ ?  
\end{equation*}
In essence, it is 
to explore whether or not the range of the high-dimensional function ${\mathbfcal {E}}_h$ is always contained in $G$:   
${\mathbfcal {E}}_h ( G^{s+k+1} ) \subseteq G.$ 
For some scalar PDEs with linear constraints, for instance, the scalar conservation laws with the constraints linearly defined by maximum principle, 
a general approach for 
bound-preserving analysis and design is to exploit certain 
monotonicity in schemes; see, e.g.,  \cite{zhang2010,du2017nonlocal,li2019monotonicity}. 
Yet, for PDE systems especially with nonlinear constraints, 
there is no unified tool like monotonicity, so that 
direct and complicated algebraic verification usually 
has to be performed for each constraint   
case-by-case for different schemes and different PDEs; see, e.g., \cite{zhang2010b,olbrant2012realizability,WuTang2015,QinShu2016,ZHANG2017301,meena2017positivity,WuShu2018}. 
Therefore, the design and analysis of 
 bound-preserving schemes involving nonlinear constraints are 
 highly nontrivial, 
 even for first-order schemes; cf.~\cite{tao1999gas,batten1997choice,tang2000positivity,perthame1992second,khobalatte1994maximum,ling2019physical,meena2020positivity,WuTangM3AS,Wu2017a}.

Nonlinear constraints widely exist in many physical PDE systems; see several representative examples in \cref{sec:Examples}. 
For instance, the physical constraints for solutions of the special relativistic magnetohydrodynamic (MHD) equations \eqref{eq:RMHD} 
 include:     
the positivity of density $D$ and thermal pressure $p$, and  
the upper bound of fluid velocity field $\bm v$ by the speed of light $c$, namely,  
\begin{equation}\label{eq:RMHD-constraints}
	D >0,\qquad p({\bf u})>0,\qquad c-\| {\bm v} ({\bf u}) \| > 0, 
\end{equation} 
where the evolved variables ${\bf u} = (D,{\bm m},{\bf B},E)^\top$ with 
the momentum vector ${\bm m} \in \mathbb R^3$,  the magnetic field ${\bf B} \in \mathbb R^3$, and the total energy $E$; see \cref{ex:RMHD} and \cite{WuShu2020NumMath} for more details. {\em The second and third constraints in \cref{eq:RMHD-constraints}
are highly nonlinear with respect to ${\bf u}$, because $p({\bf u})$ and ${\bm v}({\bf u})$ 
cannot be explicitly formulated in terms of 
${\bf u}$.} These  
implicit functions $p({\bf u})$ and ${\bm v}({\bf u})$ are often  
expressed via another implicit function $ \hat \phi ({\bf u})$ as   
\begin{equation}\label{getprimformU}
	p( {\bf u}) = \frac{{\Gamma  - 1}}{{\Gamma \varUpsilon_{ \bf u}^2( \hat \phi )}}\Big( {{ \hat \phi } - D \varUpsilon_{ \bf u} ( \hat \phi )} \Big), \quad 
	{\bm v}({\bf u}) = \left( { {\bm m} +  ( {\bm m} \cdot {\bf B} ) {\bf B}} /\hat \phi \right)/( \hat \phi+ {| {\bf B} |^2}),  
\end{equation}
where $ \hat \phi = \hat \phi ({\bf u})$ is implicitly defined by the positive root of the nonlinear function 
$
F(\phi; {\bf u}) :=  \phi  - E 
	+ {\left\| {\bf B} \right\|^2}   
	- \frac{1}{2}\left( { \frac{{{{( {\bm m} \cdot {\bf B} )}^2}}}{{{ \phi ^2}}} + \frac{{{{\left\|  {\bf B} \right\|}^2}}}{{{{\varUpsilon^2_{ \bf u}( \phi )}}}}   } \right)   
	 + \frac{{\Gamma  - 1}}{\Gamma }\left( {  \frac{D}{ 
			\varUpsilon_{ \bf u}( \phi )}  - \frac{ \phi }{{{{\varUpsilon^2_{ \bf u}( \phi )}}}}   } \right)
$,  the constant $\Gamma$ is the ratio of specific heats, and  
$
\varUpsilon_{ \bf u}( \phi ) := \left( \frac{ { \phi^2}{{( \phi + {\| {\bf B} \|^2})}^2} - \left[ { \phi^2}{ \|{\bm m} \|^2} + (2 \phi +{ \| {\bf B} \|^2})  {{( {\bm m} \cdot {\bf B} )}^2} \right] } {  { \phi^{2}} {{( \phi + {\| {\bf B} \|^2})}^{2}} }
\right)^{ - \frac12 }.
$
If we substitute a scheme \cref{eq:Ascheme} into the implicit functions $p({\bf u})$ and ${\bm v}({\bf u})$, then evaluating these implicit functions and analytically verifying the nonlinear constraints in \cref{eq:RMHD-constraints} for the scheme \cref{eq:Ascheme} are indeed very complicated and difficult (if not impossible).

In this paper we discover that, through properly introducing some extra auxiliary variables {\em independent} of the system variables ${\bf u}$, 
 nonlinear constraints can be {\em equivalently represented} 
by using only {\em linear} constraints, if 
the region $G$ is convex.    
For example, 
the simple nonlinear constraint  
\begin{equation}\label{eq:Ex1A}
	g({\bf u})=u_2 - {u_1^2} > 0
\end{equation}
is exactly equivalent to\footnote{The equivalence of \cref{eq:Ex1A} and \cref{eq:Ex1B} can be easily proven by $\min_{\theta_*\in \mathbb R} \varphi( {\bf u}; \theta_* ) = g({\bf u})$.} 
\begin{equation}\label{eq:Ex1B}
	\varphi( {\bf u}; \theta_* ) := u_2 - 2 u_1 \theta_* + \theta_*^2> 0\qquad \forall \theta_* \in \mathbb R,
\end{equation}
where the extra parameter $\theta_*$ is {\em independent} of $\bf u$ and called {\em free auxiliary variable} in this paper. Clearly, the new constraint \cref{eq:Ex1B} becomes  linear\footnote{This paper	broadly uses the word ``linear'', which means ``affine'' for functions or constraints with respect to $\bf u$.} 
with respect to $\bf u$.  
As we will show, such equivalent linear representation can be found  
for general nonlinear constraints, even if the constraints cannot be explicitly formulated. 
For instance, as it will be shown in \cref{thm:RMHD-GQL}, 
the constraints in \cref{eq:RMHD-constraints} 
can be 
equivalently represented as 
\begin{equation}\label{eq:RMHD22}
	D>0, \qquad {\bf u} \cdot
	{{ {\bf n}_*}} + {p^*_m} >0 \quad \forall {\bf B}_* \in \mathbb R^3~~ \forall {\bm v}_* \in \mathbb B_1({\bf 0}), 
\end{equation}
where 
$\{{\bf B}_*,{\bm v}_*\}$ are the free auxiliary variables; the vector ${{ {\bf n}_*}}$ 
and scalar ${p^*_m}$ are functions of $\{{\bf B}_*,{\bm v}_*\}$, defined by \cref{eq:RMHD:vecns2}--\cref{eq:RMHD:vecns}; $\mathbb B_1({\bf 0}) := \{ {\bm x} \in \mathbb R^3: \|{\bm x}\|<1  \}$. 
Note that the equivalent constraints in \cref{eq:RMHD22} are all linear with respect to $\bf u$. 
Benefited from such linearity,  
this novel equivalent form \cref{eq:RMHD22} has significant  
 advantages over the original form \cref{eq:RMHD-constraints} in designing and 
 analytically analyzing the bound-preserving schemes \cite{WuShu2020NumMath}. 
Several important questions naturally arise: 
Are there any intrinsic mechanisms behind such an equivalent linear representation? What is the condition for its existence? 
In general, how to find or construct it? 

The aim of this article is to 
establish a 
universal framework, 
termed as geometric quasilinearization (GQL), for constructing 
equivalent linear representations for general nonlinear constraints. 
It will be based on some key insights from geometry to understand a convex region $G$. 
The GQL framework would 
shed new light on challenging bound-preserving problems involving nonlinear constraints.   
The novelty and significance of the proposed GQL framework include: 
\begin{itemize}
	\item A distinctive innovation of GQL lies in   
		a novel geometric point of view on the nonlinear algebraic constraints and the convex invariant region $G$.
	\item 
		Through introducing some extra free auxiliary variables, 
		this framework provides a simple yet unified approach to 
		derive the equivalent linear representation (termed as GQL representation)
		for a general convex region $G$. 
	\item GQL offers  
	a highly effective approach for bound-preserving analysis and design for problems with nonlinear constraints.
	\item The GQL representations have simple formulations and are very easy to construct. We will propose three effective methods for constructing GQL.
\end{itemize}
The idea of GQL is motivated from a series of our recent works on 
seeking bound-preserving 
schemes for 
the (single-component) compressible MHD systems \cite{Wu2017a,WuTangM3AS,WuShu2018,WuShu2019,WuShu2020NumMath}. 
For the invariant region of the ideal MHD equations, its equivalent linear representation was first established by technical algebraic manipulations \cite{Wu2017a}. Such a  representation played crucial roles in obtaining the first rigorous positivity-preserving analysis of numerical schemes for the ideal MHD system \cite{Wu2017a}, and also in designing  
the provably positivity-preserving multidimensional MHD schemes  \cite{WuShu2018,WuShu2019,WuShu2020NumMath}.  
The success of the GQL idea in these special cases  
 strongly encourages us to explore its essential mechanisms and universal framework for general systems. 

Our efforts in this article include:  
\begin{itemize}
	\item We
	interpret, from a geometric viewpoint,   
	the fundamental principle behind the GQL representations for general nonlinear algebraic constraints. 
	\item We establish the universal GQL framework and its  mathematical theory. 
	\item We propose three simple effective methods for constructing GQL representations using extra free auxiliary variables in exchange for linearity. As examples, 
	the GQL representations are derived for the invariant regions of various physical systems. 
	\item We illustrate the GQL methodology and related techniques for nonlinear bound-preserving analysis and design, demonstrating its 
	effectiveness and remarkable advantages, by diverse challenging applications which cannot be easily handled by direct or traditional approaches.  
\end{itemize}
We emphasize that GQL 
has no restriction on the specific forms of the equations \cref{eq:gPDE}. This makes the framework 
applicable to general time-dependent PDE systems 
that possess convex invariant regions with nonlinear constraints. 


The paper is organized as follows. \Cref{sec:Examples} presents several examples of physical PDE systems 
with convex invariant regions and nonlinear constraints. 
\Cref{sec:GQL} explores the fundamental principle and general theory for the GQL framework. 
We propose in \cref{sec:Construction} three simple effective methods for constructing GQL representations, along with extensive examples. 
\Cref{sec:GQL-CP} illustrates 
the GQL approach for bound-preserving analysis. 
In \cref{sec:GQL-MMHD} we apply the GQL approach to design bound-preserving schemes for the multicomponent MHD system, 
and further demonstrate its powerful capabilities in addressing challenging bound-preserving 
problems that could not be coped with by direct or traditional approaches.  
Several experimental
results are given in \cref{sec:experiments} to verify the performance of the bound-preserving schemes developed via GQL.   
The 
conclusions follow in
\cref{sec:conclusions}. 
Throughout this paper, we will use ${\rm cl} (G)$, ${\rm int} (G) $, and 
$\partial G$ to denote the closure, the interior, and the boundary of 
a region $G$, respectively. 
We employ $\| {\bf a} \|$ to denote the 2-norm of vector $\bf a$. 
We use $ {\bf a} \cdot {\bf b} $ to denote 
the inner product of two vectors ${\bf a}$ and $\bf b$, and $ {\bf a} \otimes {\bf b} $ to denote
the outer product, i.e., in index notation, $({\bf a} \otimes {\bf b})_{ij}=a_i b_j$.

\section{Examples of PDE systems with nonlinear constraints}\label{sec:Examples}

In this section, we present several examples 
of physical PDE systems involving nonlinear algebraic constraints.   
For convenience, the ideal equation of state $p=(\Gamma -1) \rho e$ is used to close the systems in \cref{ex:Euler,ex:NS,ex:RHD,ex:IMHD,ex:RMHD}, with $p$ denoting the thermal pressure, $\rho$ the (rest-mass) density, $e$ the specific internal energy, and the constant $\Gamma>1$ denoting the ratio of specific heats. 
For the relativistic models in \cref{ex:M1,ex:RHD,ex:RMHD}, normalized units are employed such that the speed of light $c=1$.

\begin{example}[Euler System]\label{ex:Euler}
Consider the 1D compressible Euler equations \cite{zhang2010b}
\begin{equation}\label{eq:1DEuler}
	\partial_t {\bf u} + \partial_x  {\bf f} ( {\bf u} ) = {\bf 0}, \qquad 
{\bf u}= 	
\begin{pmatrix}
	\rho 
	\\
	m
	\\
	E	
\end{pmatrix}, 
\qquad 
{\bf f} ({\bf u})= 	
\begin{pmatrix}
	m 
	\\
	mv + p
	\\
	 (E+p)v	
\end{pmatrix},
\end{equation}
where $\rho$, $m$, $v=m/\rho$, and $p$ denote the fluid density, momentum, velocity, and pressure, respectively. 
The quantity $E=\rho e+\frac12 \rho v^2$ is the total energy, with $e$ being the specific internal energy. 
For this system, the density $\rho$ and the internal energy $\rho e$ are positive, namely, $\bf u$ should stay in the region 
\begin{equation}\label{eq:EulerNS-G1}
	G = \left\{ {\bf u} = (\rho,m,E)^\top \in \mathbb R^3:~ \rho>0,~g({\bf u}) :=
E-\frac{m^2}{2\rho} >0  \right\},
\end{equation}
which is a convex invariant region of the system \cref{eq:1DEuler}. 
If we further consider Tadmor's minimum entropy principle \cite{TADMOR1986211},   $S({\bf u}) \ge S_{min}:=\min_{\bm x} S ( {\bf u}_0({\bm x}) )$, for the specific entropy $S=p \rho^{-\Gamma}$, then 
we obtain another convex invariant region
\begin{equation}\label{eq:EulerNS-G2}
	\widetilde G = \left\{ {\bf u} = (\rho,m,E)^\top \in \mathbb R^3:~\rho>0,~\widetilde g({\bf u})\ge 0  \right\} 
\end{equation}
with $$\widetilde g({\bf u}):=S({\bf u})-S_{min}=\frac{\Gamma -1}{\rho^\Gamma} \left( E-\frac{m^2}{2\rho}
\right) - S_{min}.$$ 
The readers are referred to \cite{zhang2010b,zhang2012minimum} for proofs of the convexity of $G$ and $\widetilde G$. 
Convex invariant regions for the 2D and 3D Euler systems are analogous and omitted here. 

\end{example}

\begin{example}[Navier--Stokes System]\label{ex:NS} 
	Consider the 1D dimensionless compressible Navier--Stokes equations (see, for example, \cite{ZHANG2017301}): 
\begin{equation}\label{eq:1DNS}
	\partial_t {\bf u} + \partial_x  {\bf f} ( {\bf u} ) = \frac{\eta}{ {\tt Re} } \partial_{xx}
	{\bf r}({\bf u})
	, \qquad 
	{\bf r} ({\bf u})= 	
	\begin{pmatrix}
		0 
		\\
		v
		\\
		\frac{ v^2 }2 + \frac{\Gamma}{ {\tt Pr}~\eta } e	
	\end{pmatrix},
\end{equation}
where $\{\eta, {\tt Re}, {\tt Pr} \}$ are positive constants, and the definitions of ${\bf u}$ and ${\bf f}({\bf u})$ are the same as 
 \cref{ex:Euler}. Both sets in \cref{eq:EulerNS-G1} and \cref{eq:EulerNS-G2} are also invariant regions for system \cref{eq:1DNS}. 
\end{example}

\begin{example}[M1 Model of Radiative Transfer]\label{ex:M1} 
For the solutions of the gray M1 moment system of radiative transfer (see, for example, \cite{olbrant2012realizability,berthon2007hllc}), a convex invariant region 
is 
\begin{equation}\label{eq:M1-G}
	G = \left\{ 
	{\bf u} = ( E_r, {\mathbfcal F}_r  )^\top \in \mathbb R^4:~ g ({\bf u}):= E_r - \| {\mathbfcal F}_r \| \ge 0 
	\right\}, 
\end{equation} 
where $E_r$ is the radiation energy, and ${\mathbfcal F}_r$ is the radiation energy flux.  

\end{example}

\begin{example}[Relativistic Hydrodynamic System]\label{ex:RHD}
Consider the 1D governing equations of the special relativistic hydrodynamics (RHD) \cite{WuTang2015,QinShu2016}: 
\begin{equation}\label{eq:1DRHD}
	\partial_t {\bf u} + \partial_x  {\bf f} ( {\bf u} ) = {\bf 0}, \qquad 
	{\bf u}= 	
	\begin{pmatrix}
		D 
		\\
		m
		\\
		E	
	\end{pmatrix}, 
	\qquad 
	{\bf f} ({\bf u})= 	
	\begin{pmatrix}
		D v 
		\\
		m v + p
		\\
		m	
	\end{pmatrix}
\end{equation}
with the density $D=\rho \gamma$, 
the momentum
$m=\rho h \gamma^2 v$, 
the energy $E=\rho h \gamma^2 -p$. Here,  
$\rho$,  $v$, $p$, and $\gamma = (1-v^2)^{-\frac12}$ denote the rest-mass density, velocity, pressure, and Lorentz factor, respectively. The quantity $h=1+e + p/\rho$ represents the specific enthalpy, with $e$ being the specific internal energy.  
For this system, the density and the pressure are positive, 
and the magnitude of $v$ must be smaller than the speed of light ($c=1$). These physical constraints define the invariant region 
\begin{equation}
	G = \left \{  {\bf u} \in \mathbb R^3:  D>0,~p({\bf U})>0,~1 - |v({\bf U})|>0   \right \}.
\end{equation}
It was proven in \cite{WuTang2015} that the region $G$ is convex and can be equivalently represented as 
\begin{equation}\label{eq:RHD-G1}
	G = \left \{  {\bf u} \in \mathbb R^3:~D>0,~g({\bf u}) := E-\sqrt{D^2 + m^2} > 0 \right \}.
\end{equation}  
As shown in \cite{wu2021minimum}, the minimum entropy principle  $S({\bf u}) \ge S_{min}$ also holds for the RHD system \cref{eq:1DRHD}, yielding another invariant region 
\begin{equation}\label{eq:RHD-G2}
	\widetilde G = \left\{ {\bf u} \in \mathbb R^3:~D>0,~g({\bf u}) >0,~\widetilde g({\bf u})  \ge 0  \right\},
\end{equation}
where 
$\widetilde g({\bf u}) := { p({\bf u}) }{ ( \rho({\bf u}) )^{-\Gamma} } - S_{min}$ 
is a highly nonlinear implicit function. 
In the RHD case, 
 the functions $p({\bf u})$ and $\rho({\bf u})$ cannot be explicitly expressed in terms of $\bf u$. Specifically, $p({\bf u})$ is implicitly defined by the positive root  
 of  
 the nonlinear function 
$
F(p;{\bf u}) :=	\frac{ m^2 }{E+p} 
	+ D \big( 1 - \frac{ m^2 }{ (E+p)^2 }  \big)^{\frac12} + \frac{p}{\Gamma - 1} - E, 
$ 
and then $\rho( {\bf u} ) = D \sqrt{ 1 -  m^2/{(E + p ({\bf u}))}^2  }$.

\end{example}



\begin{example}[Ten-Moment Gaussian Closure System]
In 2D, this system \cite{meena2017positivity,meena2020positivity} reads 
 \begin{align}\label{eq:Ten-Moment}
&	\partial_t {\bf u} +  \partial_x {\bf f}_1 ( {\bf u} ) + \partial_y {\bf f}_2  ( {\bf u} ) = {\bf 0}, 
	\\		\nonumber
&
	{\bf u} = 	
\begin{pmatrix}
	\rho  
	\\
	m_1 
	\\
	m_2 
	\\
	E_{11}
	\\
	E_{12}
	\\
	E_{22}
\end{pmatrix},\qquad 
{\bf f}_j ( {\bf u} )= 	
\begin{pmatrix}
	m_j 
	\\
	m_1 v_j + p_{1j}
	\\
	m_2 v_j + p_{2j}
	\\
	E_{11} v_j +p_{1j}	v_1
	\\
	E_{12} v_j + \frac12 ( p_{1j} v_2 + p_{2j} v_1 )
	\\
	E_{22} v_j + p_{2j} v_2
\end{pmatrix},\quad j=1,2.
\end{align}
Here $\rho$, ${\bm m}=(m_1,m_2)$, ${\bm v} = {\bm m}/\rho$, ${\bf E}=(E_{ij})_{1\le i,j \le 2}$, and ${\bf p}=(p_{ij})_{1\le i,j \le 2}$ are respectively 
the density, momentum vector, velocity, symmetric energy tensor, and 
symmetric anisotropic pressure tensor. 
The system \cref{eq:Ten-Moment} is closed by ${\bf p}= 2 {\bf E} - \rho {\bm v} \otimes {\bm v} $. 
For this system, the density $\rho$ is positive, and the pressure tensor $\bf p$ is  positive-definite, namely, the evolved variables $\bf u$ 
should belong to the following invariant region 
\begin{align}\label{eq:10M-G}
G &= \left\{ {\bf u}  \in \mathbb R^6:~\rho>0,~ {\bf E} - \frac{ {\bm m} \otimes {\bm m} }{2\rho}~\mbox{is positive-definite} \right\}
\\ \label{eq:10M-G11}
& = \left\{ {\bf u}  \in \mathbb R^6:~\rho>0,~ {\bm z}^\top \left( {\bf E} - \frac{ {\bm m} \otimes {\bm m} }{2\rho} \right) {\bm z} >0~~ \forall {\bm z} \in \mathbb R^2 \setminus \{ {\bf 0} \}  \right\}.
\end{align}

\end{example}

\begin{example}[Ideal MHD System]\label{ex:IMHD}
	This system \cite{Wu2017a,WuShu2018} can be written as 
\begin{equation}\label{eq:idealMHD}
	\partial_t	
	\begin{pmatrix}
		\rho  
		\\
		{\bm m}
		\\
		{\bf B}
		\\
		E	
	\end{pmatrix}
	+ 	\nabla \cdot
	\begin{pmatrix}
		{\bm m}
		\\
		{\bm m} \otimes {\bm v} - {\bf B} \otimes {\bf B}
		+ p_{tot} {\bf I}  
		\\
		{\bm v} \otimes {\bf B} - {\bf B} \otimes {\bm v}
		\\
		\left( E + p_{tot} \right) {\bm v} 
		- ( {\bm v} \cdot {\bf B} ) {\bf B}	
	\end{pmatrix} = {\bf 0}
\end{equation}	
with $\rho$ being the density, $\bm m$ the momentum vector, 
${\bm v}={\bm m}/\rho$ the velocity, 
$E=\rho e + \frac12 ( \rho \|{\bm v} \|^2 + \| {\bf B} \|^2 )$ denoting the total energy, 
$p_{tot} = p+\frac12 \| {\bf B} \|^2$ being the total pressure, 
$p$ the thermal pressure, 
and $\bf B$  the magnetic field which satisfies the extra divergence-free condition $\nabla \cdot {\bf B} = 0.$ For this system, the density $\rho$ and the internal energy $\rho e$ 
are positive, namely, $\bf u$ should stay in the invariant region 
\begin{equation}\label{eq:G-iMHD}
	G = \left\{ 
	{\bf u}=(\rho, {\bm m}, {\bf B}, E)^\top \in \mathbb R^8:~ \rho>0,~ 
	g({\bf u}) := E - 
	\frac{ \|{\bm m}\|^2 }{2\rho} -\frac{ \|{\bf B}\|^2 }2 > 0 
	\right\}.
\end{equation}

\end{example}

\begin{example}[Relativistic MHD System]\label{ex:RMHD}
This system \cite{WuShu2020NumMath} takes the form of 
 \begin{equation}\label{eq:RMHD}
			\partial_t 
		\begin{pmatrix}
			D  
			\\
			{\bm m}
			\\
			{\bf B}
			\\
			E	
		\end{pmatrix} + \nabla \cdot	
		\begin{pmatrix}
			D {\bm v} 
			\\
			{\bm m} \otimes {\bm v} - {\bf B} \otimes 
			\left( \gamma^{-2} {\bf B} + ( {\bm v} \cdot {\bf B} ) {\bm v} \right) + p_{tot} {\bf I}
			\\
			{\bm v} \otimes {\bf B} - {\bf B} \otimes {\bm v}
			\\
			{\bm m}	
		\end{pmatrix} = {\bf 0}
\end{equation}
with the mass density $D = \rho \gamma$, the momentum vector ${\bm m} = (\rho h{\gamma^2} + \|{\bf B}\|^2) {\bm v} - ( {\bm v} \cdot {\bf B} ) {\bf B}$, the energy $E=\rho h \gamma^2 - p_{tot} +\| {\bf B} \|^2$, and the magnetic field ${\bf B}$ satisfies $\nabla \cdot {\bf B} = 0$ as the ideal MHD case. The total pressure
$p_{tot}$ consists of the magnetic pressure $p_m:=\frac12 \left(\gamma^{-2} \| {\bf B} \|^2 +( {\bm v} \cdot {\bf B} )^2 \right)$ and the thermal pressure $p$. 
Analogously to \cref{ex:RHD},  
the quantities 
$\rho$,  $\bm v$, $h$, and $\gamma = (1-\| {\bm v} \|^2)^{-\frac12}$ are respectively the rest-mass density, velocity, specific enthalpy, and Lorentz factor. 
The positivity of density and pressure as well as the subluminal constraint $\| {\bm v} \| < c=1$ constitute the invariant region 
\begin{equation}\label{eq:RMHD-G1}
	G = \left \{  {\bf u} =(D, {\bm m}, {\bf B},E)^\top \in \mathbb R^8:  D>0,~p({\bf u})>0,~1 - \|{\bm v}({\bf u})\|>0   \right \}, 
\end{equation}
where $p({\bf u})$ and ${\bm v}({\bf u})$ are highly nonlinear 
and cannot be explicitly formulated, as discussed in \cref{getprimformU}. 
\end{example}

\section{Framework and theory of geometric quasilinearization}\label{sec:GQL}
This section establishes the universal GQL framework, with the geometric insights into understanding the fundamental principle behind the GQL representations.

Let $G \subset \mathbb R^N$ be an invariant region or admissible state set of a physical system. Assume that $G$ can be formulated into the general form \cref{eq:ASS-G}. 
For notational convenience, we represent $G$ as 
\begin{equation}\label{eq:ASS-G1}
	G = \left \{ {\bf u}  \in \mathbb R^N:~g_i({\bf u}) \succ 0,~1\le i \le I \right \}, 
\end{equation}
where the symbol ``$\succ$'' denotes   
 ``$>$'' if $i \in \mathbb I$, or ``$\ge$'' if $i \in \widehat {\mathbb I}$. 
 Let $G_L=\{ {\bf u} \in \mathbb R^N: g_i({\bf u}) \succ 0~ \forall i \in {\mathbb I}_L \}$ be the region  formed by all the linear constraints in $G$, i.e., the function $g_i$ is linear for $i \in {\mathbb I}_L$. If ${\mathbb I}_L = \emptyset$, then we define $G_L = \mathbb R^N$.

We consider the nontrivial case that at least one of the functions $\{ g_i ({\bf u})\}$ is nonlinear, namely, $G \subset G_L$ and $G \neq G_L$. 
The goal of our GQL methodology is to use some extra free  auxiliary variables in exchange for linearity, and more precisely, is to  equivalently represent $G$ by using only {\it linear} constraints  
with the help of free auxiliary variables.

\begin{definition}\label{def:GQL}
We say a set $G_*$ is an equivalent linear representation (termed as GQL representation) of the region $G$, if $G_* = G$ and $G_*$ takes the form 
\begin{equation}\label{eq:Gs}
	G_* = \left\{ 
	{\bf u}\in \mathbb R^N: \varphi_i( {\bf u}; {\bm \theta}_{i*} ) \succ 0~~ 
	\forall {\bm \theta}_{i*} \in \Theta_i,~ 
	1\le i \le I
	\right\}, 
\end{equation}
where the functions $\{\varphi_i\}$ are all {\bf{\em linear}} (affine) with respect to $\bf u$; the  parameters ${\bm \theta}_{i*}$ are independent of ${\bf u}$ and stand for the (possible) extra free auxiliary variables with $\Theta_i$ denoting their ranges. 
\end{definition}

Based on \cref{def:GQL}, we immediately have:  

\begin{theorem}\label{thm:minphi}
Assume that a set $G_*$ is of the form \cref{eq:Gs} with 
 $\varphi_i$ being linear with respect to $\bf u$ and satisfying  
\begin{equation}\label{key656}
\min_{ {\bm \theta}_{i*}  \in \Theta_i }
\varphi_i( {\bf u}; {\bm \theta}_{i*} )  = \lambda_i ({\bf u}) g_i ({\bf u})  
\end{equation}
with $\lambda_i ({\bf u})>0$ for all ${\bf u} \in G_L$. 
Then $G_* = G$, and $G_*$ is the GQL representation of $G$.  
\end{theorem}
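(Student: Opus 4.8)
The plan is to prove the set identity $G_* = G$ by establishing the two inclusions $G \subseteq G_*$ and $G_* \subseteq G$; once this is done, the remaining assertion that $G_*$ is a GQL representation of $G$ is immediate from \cref{def:GQL}, since every $\varphi_i$ is affine in ${\bf u}$ by hypothesis. The pivotal observation I would record first is that, because the minimum in \cref{key656} is attained, the universally quantified constraint in the definition \cref{eq:Gs} of $G_*$ collapses to a single scalar inequality: for each index $i$ the statement ``$\varphi_i({\bf u};{\bm \theta}_{i*}) \succ 0$ for all ${\bm \theta}_{i*}\in\Theta_i$'' holds if and only if $\min_{{\bm \theta}_{i*}\in\Theta_i}\varphi_i({\bf u};{\bm \theta}_{i*}) \succ 0$, that is, if and only if $\lambda_i({\bf u})\,g_i({\bf u}) \succ 0$. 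The attainment of the minimum is exactly what keeps this equivalence sharp for the strict constraints (those $i\in\mathbb I$, where $\succ$ means $>$); with only an infimum the boundary case could be lost. Membership in $G_*$ is thereby reduced, index by index, to the sign conditions $\lambda_i({\bf u})\,g_i({\bf u}) \succ 0$.

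For the inclusion $G \subseteq G_*$ I would take any ${\bf u}\in G$. Since $G$ inherits all the constraints defining $G_L$, we have ${\bf u}\in G_L$, so the hypothesis supplies $\lambda_i({\bf u})>0$ for every $i$. As ${\bf u}\in G$ also gives $g_i({\bf u}) \succ 0$, the product satisfies $\lambda_i({\bf u})\,g_i({\bf u}) \succ 0$ with the matching type of inequality for each $i$, and the equivalence just established yields ${\bf u}\in G_*$.

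The reverse inclusion $G_* \subseteq G$ is where the genuine obstacle lies, and it is precisely the sign of $\lambda_i$. For ${\bf u}\in G_*$ the reduction gives $\lambda_i({\bf u})\,g_i({\bf u}) \succ 0$, but to cancel $\lambda_i$ and recover $g_i({\bf u}) \succ 0$ I must know $\lambda_i({\bf u})>0$, which the hypothesis guarantees only on $G_L$. The key step is therefore to prove $G_* \subseteq G_L$ \emph{before} cancelling. I would secure this by observing that the linear constraints of $G$, indexed by $i\in\mathbb I_L$, pass to $G_*$ unchanged---one may take $\varphi_i = g_i$ with $\Theta_i$ a singleton and $\lambda_i\equiv 1$---so that any ${\bf u}\in G_*$ already satisfies $g_i({\bf u}) \succ 0$ for all $i\in\mathbb I_L$, whence ${\bf u}\in G_L$; in the degenerate case $\mathbb I_L=\emptyset$ one has $G_L=\mathbb R^N$ and $\lambda_i>0$ everywhere, so the step is vacuous. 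With ${\bf u}\in G_L$ in hand, $\lambda_i({\bf u})>0$ applies to every index, so $\lambda_i({\bf u})\,g_i({\bf u}) \succ 0$ forces $g_i({\bf u}) \succ 0$, and hence ${\bf u}\in G$.

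I expect the real subtlety---as opposed to any lengthy computation---to be twofold: the careful bookkeeping that distinguishes the strict constraints ($i\in\mathbb I$) from the non-strict ones ($i\in\widehat{\mathbb I}$) when transporting the inequality type through the minimum, and the logical ordering by which ${\bf u}\in G_L$ must be secured first, so that the positivity of $\lambda_i$ is legitimately available in passing from $\lambda_i g_i \succ 0$ to $g_i \succ 0$. Beyond these points, the statement is a direct consequence of the attainment hypothesis \cref{key656} and the linearity of the $\varphi_i$ in ${\bf u}$.
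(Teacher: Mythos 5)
Your proof is correct and follows essentially the paper's own (implicit) reasoning: the paper states \cref{thm:minphi} without a written proof, presenting it as an immediate consequence of \cref{def:GQL}, and your reduction of the universally quantified constraints in \cref{eq:Gs} to the attained minima in \cref{key656}, combined with the positivity of $\lambda_i$ on $G_L$, supplies precisely the details the paper leaves implicit. Your treatment of the one delicate point---securing ${\bf u}\in G_L$ \emph{before} cancelling $\lambda_i$, by keeping the linear constraints unchanged (taking $\varphi_i = g_i$ for $i\in\mathbb I_L$)---is exactly the convention the paper itself adopts in the remark immediately following the theorem, so it is faithful to the intended hypotheses rather than a deviation.
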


\begin{remark} 
For $i \in {\mathbb I}_L$, the function $g_i({\bf u})$ is already linear, thus we can simply take  
$\varphi_i( {\bf u}; {\bm \theta}_{i*} ) = g_i({\bf u})$, without free auxiliary variable ${\bm \theta}_{i*}$ in this case. That is, all the linear constraints remain unchanged in the GQL representation. 
\end{remark}

\cref{thm:minphi} points out a way to seek the GQL representation, namely, by constructing linear functions $\{\varphi_i\}$ such that \cref{key656} holds. We have used this approach in \cite{Wu2017a} to establish the GQL representation of 
the invariant region \cref{eq:G-iMHD} for the ideal MHD equations. However, this constructive approach needs some empirical observations or trial-and-error procedures, as \cref{thm:minphi} does not provide any insight on how to find the qualified  $\{\varphi_i\}$. 
In the following, we explore a simpler yet universal 
approach from the  geometric point of view.

Given that $\{\varphi_i\}$ in \cref{eq:Gs} are all linear with respect to $\bf u$, the set $G_*$ is always convex. This means if the region $G$ has GQL representation \cref{eq:Gs}, then $G$ must also be convex. Hence we should make the following basic (minimal) assumption. 
\begin{assum}\label{assum:convex}
	The invariant region $G$ is convex, and ${\rm int}(G) \neq \emptyset$. 
\end{assum}
This basic assumption is valid for many physical systems including all those introduced in \cref{sec:Examples}. 
Again, we emphasize that the functions $\{g_i ({\bf u})\}$ are {\em not} necessarily concave. 

\subsection{A heuristic example}

Before deriving the general theory, let us look at an example to gain some insight,
which inspires 
 us to achieve the GQL framework.

\begin{figure}[tbh]
	~~\includegraphics[width=0.9\textwidth]{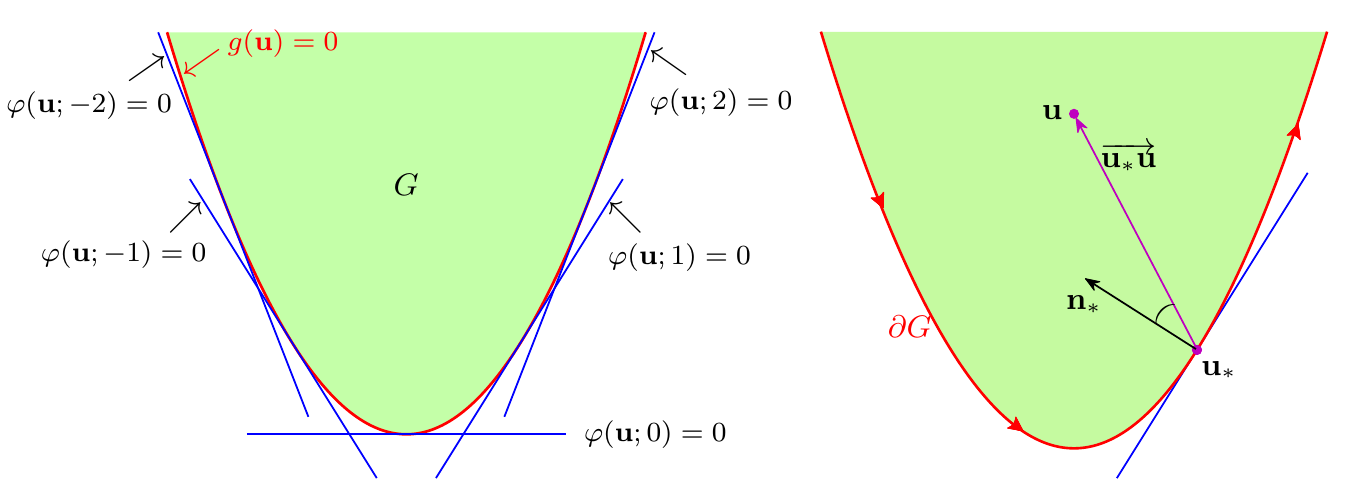} 
	\captionsetup{belowskip=-2pt}
	\caption{\small Illustrations for \cref{ex:heuristic}.}
	\label{fig:example0}
\end{figure}

\begin{example}\label{ex:heuristic}
{\em Consider the simple example mentioned in \cref{eq:Ex1A}--\cref{eq:Ex1B}, i.e., $G=\{ {\bf u}=(u_1,u_2)^\top \in \mathbb R^2:~g({\bf u})=u_2 - u_1^2 >0 \}$. According to 
\cref{thm:minphi}, the GQL representation of $G$ is    
\begin{equation}\label{eq:Hexample}
	G_* = \left \{ {\bf u}=(u_1,u_2)^\top \in \mathbb R^2:~\varphi ( {\bf u}; \theta_* ) = u_2 - 2   u_1 \theta_* + \theta_*^2> 0~~ \forall \theta_* \in \mathbb R \right \}.
\end{equation} 
As such, we gain the linearity by introducing the extra free auxiliary variable $\theta_*$. 
To understand the intrinsic mechanisms, we draw the graph of the region $G$ and its boundary curve 
$\partial G=\{ {\bf u}: g({\bf u}) =0 \}$ on the $u_1$--$u_2$ plane in \cref{fig:example0}. 
We also plot 
the graphs of $\{ {\bf u}: \varphi({\bf u};\theta_*) = 0 \}$ 
for several special values of $\theta_* \in \{ \pm 2, \pm 1, 0 \}$ in the left subfigure of \cref{fig:example0}. 
It is observed that all the lines  $\{ {\bf u}: \varphi({\bf u};\theta_*) = 0 \}$ are tangent to the  
parabolic curve $\partial G$, which exactly forms an envelope of the tangent lines.

Let 
 ${\bf u}_* = ( \theta_*, \theta_*^2 )^\top$ denote an arbitrary point on $\partial G$. One can verify that ${\bf n}_* =( -2 \theta_*, 1 )^\top$ is an inward-pointing normal vector of  $\partial G$ at ${\bf u}_*$, and 
\begin{equation*}
	\varphi ( {\bf u}; \theta_* ) =  {\bf u} \cdot {\bf n}_* - {\bf u}_* \cdot {\bf n_*} 
= \overrightarrow{{\bf u}_* {\bf u}} \cdot {\bf n}_* >0\quad \forall {\bf u} \in G. 
\end{equation*}
Imagine we are walking along the boundary $\partial G$ 
in the direction shown in the right subfigure of \cref{fig:example0}, 
then the region $G$ always lie entirely on the left side of 
the tangent lines, namely, the angle between the two vectors $\overrightarrow{{\bf u}_* {\bf u}}$ and ${\bf n}_*$ is always less than $90^\circ$ for all ${\bf u} \in G$ and all ${\bf u}_* \in \partial G$. This intuitively interprets the GQL representation \cref{eq:Hexample} from the geometric viewpoint.  

}

\end{example}


\subsection{Concepts from geometry and convex sets} 
Let us recall some concepts and results 
from theory of geometry and convex analysis \cite{leonard2015geometry,rockafellar2015convex,gruber2007convex}. 

A hyperplane in $\mathbb R^N$ is a plane of dimension $N-1$. Let ${\bf n}_* \neq {\bf 0}$ denote a normal vector of a hyperplane $H$, and let ${\bf u}_*$ be a point on $H$. 
Then $H$ can be expressed as 
$H = \{ {\bf u}\in \mathbb R^N: ( {\bf u}-{\bf u}_*) \cdot {\bf n}_*  = 0  \}$, 
and it divides $\mathbb R^N$ into two halfspaces:  
$H^+ = \{ {\bf u} \in \mathbb R^N: ( {\bf u}-{\bf u}_*) \cdot {\bf n}_*  \ge 0  \}$ 
and $H^- = \{ {\bf u} \in \mathbb R^N: ( {\bf u}-{\bf u}_*) \cdot {\bf n}_*  \le 0  \}$. 

\begin{definition}[Supporting Hyperplane and Halfspace] 
	The hyperplane $H = \{ {\bf u} \in \mathbb R^N: ( {\bf u}-{\bf u}_*) \cdot {\bf n}_* = 0  \}$ through ${\bf u}_* \in \partial G$ is  called a supporting hyperplane to $G$ 
	at ${\bf u}_*$, if $G$ lies in one of the two closed halfspaces  determined by $H$. 
	Furthermore, if the normal vector ${\bf n}_*$ points towards  
	$G$, then the closed halfspace containing $G$ is  
	$H^+ =\{ {\bf u}\in \mathbb R^N:  ( {\bf u}-{\bf u}_*) \cdot {\bf n}_*   \ge 0  \} $ and is  
	called a closed supporting halfspace to $G$. See \cref{ex:supporting}. 
\end{definition}

\begin{figure}[htbp]
	\centering
	\includegraphics[width=0.388\textwidth]{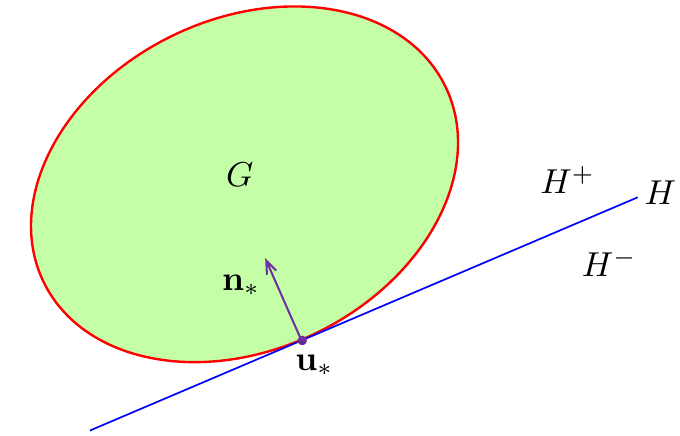}
	\captionsetup{belowskip=-2pt}
	\caption{\small Supporting hyperplane and halfspace.}
	\label{ex:supporting}
\end{figure}


\begin{theorem}[Supporting Hyperplane Theorem \cite{leonard2015geometry}]\label{lem:support}
If $G$ is a convex set and ${\rm int}(G) \neq \emptyset$, then for any ${\bf u}_* \in \partial G$, there exists a supporting hyperplane to $G$ at ${\bf u}_*$. 
\end{theorem}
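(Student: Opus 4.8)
The plan is to realize the supporting hyperplane at $\mathbf{u}_*$ as a limit of hyperplanes that \emph{strictly} separate nearby exterior points from $G$, so that the only ingredients needed are the nearest-point projection onto a closed convex set and the sequential compactness of the unit sphere. First I would pass to the closed convex set ${\rm cl}(G)$ and produce a supply of exterior points near $\mathbf{u}_*$. Since $G$ is convex with ${\rm int}(G)\neq\emptyset$, one has ${\rm int}({\rm cl}(G))={\rm int}(G)$; because $\mathbf{u}_*\in\partial G$ lies outside ${\rm int}(G)$, it cannot lie in the interior of ${\rm cl}(G)$. Hence every neighborhood of $\mathbf{u}_*$ meets the open complement $\mathbb R^N\setminus{\rm cl}(G)$, and I can choose a sequence $\{\mathbf{y}_k\}$ with $\mathbf{y}_k\notin{\rm cl}(G)$ and $\mathbf{y}_k\to\mathbf{u}_*$.

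Next, for each $k$ I would project $\mathbf{y}_k$ onto the closed convex set ${\rm cl}(G)$, letting $\mathbf{p}_k$ be the unique nearest point and setting $\mathbf{n}_k=(\mathbf{p}_k-\mathbf{y}_k)/\|\mathbf{p}_k-\mathbf{y}_k\|$, which is a well-defined unit vector because $\mathbf{y}_k\notin{\rm cl}(G)$. The variational inequality characterizing the projection, $(\mathbf{u}-\mathbf{p}_k)\cdot(\mathbf{y}_k-\mathbf{p}_k)\le 0$ for all $\mathbf{u}\in{\rm cl}(G)$, rewrites as $(\mathbf{u}-\mathbf{p}_k)\cdot\mathbf{n}_k\ge 0$, and therefore $(\mathbf{u}-\mathbf{y}_k)\cdot\mathbf{n}_k\ge\|\mathbf{p}_k-\mathbf{y}_k\|>0$ for every $\mathbf{u}\in G\subseteq{\rm cl}(G)$. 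Thus each $\mathbf{n}_k$ is a unit normal of a hyperplane strictly separating $\mathbf{y}_k$ from $G$.

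Finally I would pass to the limit. The vectors $\mathbf{n}_k$ lie on the compact unit sphere, so by sequential compactness a subsequence converges, $\mathbf{n}_{k_j}\to\mathbf{n}_*$ with $\|\mathbf{n}_*\|=1$, and in particular $\mathbf{n}_*\neq\mathbf{0}$. Fixing any $\mathbf{u}\in G$ and letting $j\to\infty$ in $(\mathbf{u}-\mathbf{y}_{k_j})\cdot\mathbf{n}_{k_j}\ge 0$, while using $\mathbf{y}_{k_j}\to\mathbf{u}_*$, gives $(\mathbf{u}-\mathbf{u}_*)\cdot\mathbf{n}_*\ge 0$ for all $\mathbf{u}\in G$. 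This is exactly the statement that $H=\{\mathbf{u}\in\mathbb R^N:(\mathbf{u}-\mathbf{u}_*)\cdot\mathbf{n}_*=0\}$ is a supporting hyperplane to $G$ at $\mathbf{u}_*$, with inward normal $\mathbf{n}_*$ so that $G$ lies in the closed halfspace $H^+$, matching the definition given above.

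I expect the main obstacle to be guaranteeing that the limiting normal does not degenerate: a careless limit of separation inequalities could collapse into the trivial inequality with $\mathbf{n}_*=\mathbf{0}$. Normalizing each $\mathbf{n}_k$ onto the unit sphere and then extracting a convergent subsequence is precisely what rules this out. The hypothesis ${\rm int}(G)\neq\emptyset$ is what feeds the argument at the start, furnishing through ${\rm int}({\rm cl}(G))={\rm int}(G)$ the exterior points $\mathbf{y}_k\to\mathbf{u}_*$ that drive the whole construction.
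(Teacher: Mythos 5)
Your proof is correct; the only thing to be aware of is that the paper does not actually prove this theorem --- it is quoted from the reference \cite{leonard2015geometry} --- so there is no internal proof to match, and what you have supplied is the classical projection argument. Every step checks out: the identity ${\rm int}({\rm cl}(G))={\rm int}(G)$, which you need so that $\mathbf{u}_*\in\partial G$ is also a boundary point of ${\rm cl}(G)$ and exterior points $\mathbf{y}_k\to\mathbf{u}_*$ exist, is a genuine consequence of convexity (via the line-segment principle: the open segment from an interior point to a closure point stays in the interior --- worth a one-line justification, since it is the only place convexity of $G$ itself, rather than of ${\rm cl}(G)$, enters before the projection step); the variational inequality $(\mathbf{u}-\mathbf{p}_k)\cdot(\mathbf{y}_k-\mathbf{p}_k)\le 0$ gives $(\mathbf{u}-\mathbf{y}_k)\cdot\mathbf{n}_k\ge\|\mathbf{p}_k-\mathbf{y}_k\|>0$ exactly as you write; and normalizing onto the unit sphere before extracting a subsequence is indeed what rules out the degenerate limit $\mathbf{n}_*=\mathbf{0}$. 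Note that strict separation is necessarily lost in the limit, but this is harmless, since the paper's definition of a supporting hyperplane only requires $G$ to lie in one \emph{closed} halfspace, which is precisely your limiting inequality $(\mathbf{u}-\mathbf{u}_*)\cdot\mathbf{n}_*\ge 0$. Your machinery is also the same machinery the paper deploys elsewhere: in the appendix proof of \cref{thm:I=1} and in step (\romannumeral2) of the proof of \cref{main:thm}, an exterior point is projected onto ${\rm cl}(G)$ and the direction from it to the nearest boundary point serves as the normal --- so your argument supplies, in a manner consistent with the paper's own techniques, a proof the authors chose to cite rather than reproduce.
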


\begin{remark}
If the boundary 
 $\partial G$ is smooth 
at a point ${\bf u}_*$, then the supporting hyperplane to 
$G$ at ${\bf u}_*$ is unique and coincide with the tangent \cite{rockafellar2015convex,gruber2007convex}. 
\end{remark}

\subsection{GQL framework}
We are now in the position to establish the GQL framework.

\subsubsection{A special case}
Inspired by 
\cref{ex:heuristic}, 
we first consider a special case that $G$ is either open or closed with differentiable boundary. 
The general case will be discussed in \cref{sec:general-case}.  

\begin{theorem}\label{thm:I=1}
	Suppose that \cref{assum:convex} holds, the region $G$ is either open or closed, and $\partial G$ is differentiable. Then $G$  has the following GQL representation: 
\begin{equation}\label{eq:I1}
	 G_* = 
	\Big\{ {\bf u} \in \mathbb R^N:~ ( {\bf u} - {\bf u}_*) \cdot  {\bf n}_{*}  \succ 0~~\forall {\bf u}_* \in \partial  G   \Big\},
\end{equation}
where the symbol ``$\succ$'' is taken as ``$>$'' if $G$ is open, or as ``$\ge$'' if $G$ is closed, and 
${\bf n}_{*}$ is only dependent on ${\bf u}_*$ and denotes an inward-pointing normal vector of $\partial  G$ at ${\bf u}_*$.

\end{theorem}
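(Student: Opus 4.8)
The plan is to establish the set equality $G_* = G$ by proving the two inclusions $G \subseteq G_*$ and $G_* \subseteq G$ separately. As a preliminary step, I note that since $G$ is convex with ${\rm int}(G) \neq \emptyset$ and $\partial G$ is differentiable, the Supporting Hyperplane Theorem (\cref{lem:support}) together with the uniqueness remark guarantees that at each ${\bf u}_* \in \partial G$ the supporting hyperplane is unique and coincides with the tangent hyperplane; hence the inward-pointing normal ${\bf n}_*$ is well-defined (up to a positive scalar, which is harmless since it does not affect the sign of $( {\bf u} - {\bf u}_*) \cdot {\bf n}_*$). This ensures the defining condition of $G_*$ is unambiguous.

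For the inclusion $G \subseteq G_*$, I fix ${\bf u} \in G$ and an arbitrary ${\bf u}_* \in \partial G$. The supporting hyperplane at ${\bf u}_*$ places all of $G$ in the closed halfspace $H^+$, so $( {\bf u} - {\bf u}_*) \cdot {\bf n}_* \ge 0$, which already settles the closed case. When $G$ is open, I upgrade this to a strict inequality: if $( {\bf u} - {\bf u}_*) \cdot {\bf n}_* = 0$ held for some such ${\bf u} \in G = {\rm int}(G)$, then the point ${\bf u} - \varepsilon {\bf n}_*$ would still lie in $G$ for small $\varepsilon > 0$ yet would violate $G \subseteq H^+$, a contradiction. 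Thus $( {\bf u} - {\bf u}_*) \cdot {\bf n}_* > 0$, and $G \subseteq G_*$ holds in both cases.

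For the reverse inclusion $G_* \subseteq G$, I argue by contraposition: given ${\bf u} \notin G$, I exhibit a single ${\bf u}_* \in \partial G$ at which the defining condition fails, so that ${\bf u} \notin G_*$. In the closed case, I take ${\bf u}_*$ to be the Euclidean projection of ${\bf u}$ onto the closed convex set $G$; then ${\bf u}_* \in \partial G$, and the projection inequality $( {\bf y} - {\bf u}_*) \cdot ( {\bf u} - {\bf u}_*) \le 0$ for all ${\bf y} \in G$ identifies ${\bf u} - {\bf u}_*$ as an outward normal direction, so by uniqueness of the supporting hyperplane ${\bf u} - {\bf u}_*$ is a positive multiple of $-{\bf n}_*$, whence $( {\bf u} - {\bf u}_*) \cdot {\bf n}_* < 0 \not\ge 0$. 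In the open case I distinguish two situations: if ${\bf u} \in \partial G$, the choice ${\bf u}_* = {\bf u}$ yields the value $0 \not> 0$; if ${\bf u} \notin {\rm cl}(G)$, I project onto ${\rm cl}(G)$ and use the identity $\partial( {\rm cl}(G)) = \partial G$, valid for open convex $G$ with nonempty interior, to land on $\partial G$ and recover the same strict-sign contradiction.

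The main obstacle I anticipate is not any single deep step but the careful bookkeeping of strict versus non-strict inequalities, making sure the symbol ``$\succ$'' is matched consistently across both inclusions and both the open and closed cases; in particular the open case requires separately treating boundary points and exterior points and invoking $\partial( {\rm cl}(G)) = \partial G$, which is where a naive argument is most likely to slip.
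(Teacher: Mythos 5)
Your proposal is correct and takes essentially the same approach as the paper's proof in \cref{sec:proofSpeicalCase}: the supporting-hyperplane inclusion with a small inward perturbation ${\bf u}-\varepsilon{\bf n}_*$ to upgrade to strict inequality in the open case, and Euclidean projection onto ${\rm cl}(G)$ combined with the uniqueness of the normal at smooth boundary points for the reverse inclusion, with the open case settled by noting that boundary points themselves violate the strict constraint. The only cosmetic differences are that you phrase the second inclusion as a contraposition and invoke the standard projection variational inequality directly, whereas the paper derives that same inequality by minimizing the quadratic $\widehat\zeta(\lambda)$ along segments from the projection point to interior points.
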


The proof of \cref{thm:I=1} is presented in \cref{sec:proofSpeicalCase}. 
Following the proof, one can further extend the above result to any closed convex region $G$, whose boundary is typically not everywhere smooth so that the supporting hyperplanes at each nonsmooth boundary point are not unique. Let ${\mathcal N}({\bf u}_*)$ denote the set of the inward-pointing unit normal vectors of 
all the supporting hyperplanes to $G$ at ${\bf u}_* \in \partial G$. 
Then one can prove that 
\begin{equation}\label{eq:generalQL}
	G = \left\{ {\bf u} \in \mathbb R^N:~ \left (  {\bf u} - {\bf u}_* \right) \cdot {\bf n}  \ge 0~~\forall {\bf n} \in  {\mathcal N}({\bf u}_*),~~\forall 
	{\bf u}_* \in \partial G \right \}.	
\end{equation}
This means any closed convex region is the intersection of all its closed supporting halfspaces \cite{leonard2015geometry}. 
However, the representation \cref{eq:generalQL} is {\em not} applicable to a general convex region that is neither closed nor open (e.g.~the invariant regions in \cref{eq:EulerNS-G2} and \cref{eq:RHD-G2}). Moreover, the representation \cref{eq:generalQL} requires the information of {\em all} the supporting hyperplanes at each nonsmooth boundary point, which can be difficult to explicitly formulate or verify, so that \cref{eq:generalQL} is not desirable 
for bound-preserving study. A practical GQL representation for more general regions will be derived in \cref{sec:general-case}.

\subsubsection{General case}\label{sec:general-case}
Consider a general convex region $G$ that may be {\em not necessarily} open or closed and its boundary may be not everywhere smooth. 
Note that the boundary of a convex region can be partitioned into several pieces, 
each of which can be  
{\em locally} 
represented as the graph of a convex function (with respect to a suitable supporting 
hyperplane). 
Recall that any convex function is locally Lipschitz continuous 
and twice differentiable almost everywhere, 
according to 
the classical theorems of Rademacher and Alexandrov (cf.~\cite{niculescu2006convex}).  
Based on these facts and 
for convenience, we make a considerably mild assumption 
on the convex invariant region $G$. 
We assume that the boundary of $G$ 
is piecewise $C^1$, and without loss of generality, for each $i \in \{1,\dots,I\}$, the function  $g_i({\bf u})$ 
in \cref{eq:ASS-G1} is $C^1$ at any points on 
\begin{equation*}
	{\mathcal S}_i := \partial G \cap \partial G_i,  \qquad \mbox{with} \quad  G_i :=  \left \{ {\bf u}  \in \mathbb R^N:~g_i({\bf u}) \succ 0 \right \}, 
\end{equation*}
where 
$\{ {\mathcal S}_i\}$ are $C^1$ 
hypersurfaces in $\mathbb R^N$ 
and 
constitute the smooth pieces of $\partial G$, i.e., $\partial G=\cup_{1\le i \le I} {\mathcal S}_i $. 
Notice that in general, ${\mathcal S}_i$ may not equal $\partial  G_i$, the region $G_i$ may be not convex, and $G$ may be neither open nor closed; see an example in \cref{fig:ex1A}. These make our following discussions nontrivial.

We remark that $G_i=\{ {\bf u}: g_i({\bf u}) \ge 0 \}$ is closed for $i\in \widehat {\mathbb I}$, and 
$G_i=\{ {\bf u}: g_i({\bf u}) > 0 \}$ is open for $i \in {\mathbb I}$. 
Since for each $i \in {\mathbb I}$, the set $G_i$ is not necessarily convex, there is a possibility that 
$G$ may not be entirely contained in 
an {\em open} supporting halfspace at ${\bf u}_* \in {\mathcal S}_i$. 
This issue is avoid if 
the 
open region $\cap_{i \in \mathbb I} G_i$ is convex, which is satisfied by all the examples in \cref{sec:Examples} and implies that 
\begin{equation}\label{key1124}
	G \cap \left( \cup_{ {\bf u}_* \in {\mathcal S}_i } 
	\{ 
	{\bf u}\in \mathbb R^N:~ ( {\bf u} - {\bf u}_*) \cdot {\bf n}_{i*}  = 0\} \right) = \emptyset  
	\quad \forall i \in {\mathbb I}, 
\end{equation}
where ${\bf n}_{i*}$ 
is   
an inward-pointing normal vector of ${\mathcal S}_i$ at ${\bf u}_*$.

\begin{figure}[htbp]
	\centering
	\includegraphics[width=0.5866\textwidth]{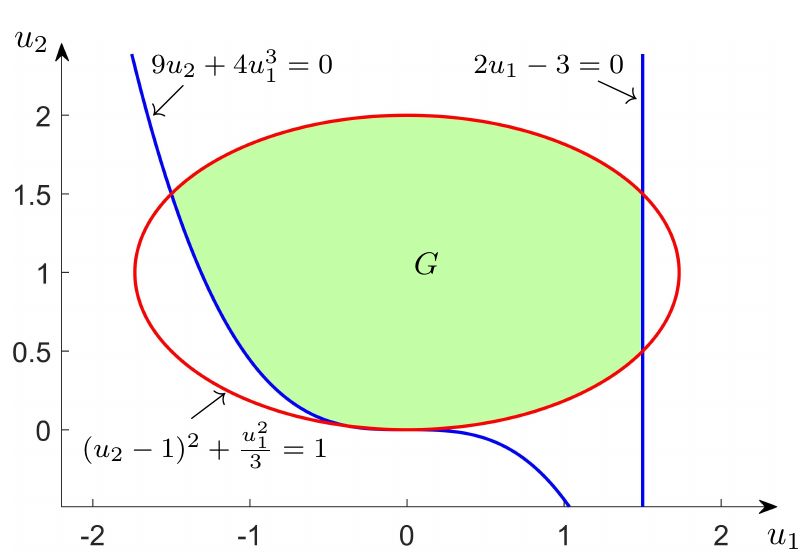} 
	\captionsetup{belowskip=-3pt}
	\caption{\small A convex region $G$ involving nonlinear constraints. $G=\{ {\bf u} \in \mathbb R^2: 
		g_1({\bf u}) \ge 0,~g_2({\bf u}) \ge 0,~g_3({\bf u}) > 0 \}$ with  
		$g_1({\bf u})=3-2u_1$, $g_2({\bf u})=9u_2 + 4 u_1^3$, and $g_3( {\bf u} )=1-{u_1^2}/3 - (u_2 -1)^2$.}
	\label{fig:ex1A}
\end{figure}

\begin{theorem}\label{main:thm}
Suppose that \cref{assum:convex} holds, condition \cref{key1124} is satisfied 
when $\mathbb I \neq \emptyset$, and the boundary of $G$ 
is piecewise $C^1$. Then the region $G$  has the following GQL representation: 
\begin{equation}\label{eq:Gs-general}
		G_* =  \Big\{ 
		{\bf u}\in \mathbb R^N:~ \big( {\bf u} - {\bf u}_*\big) \cdot {\bf n}_{i*}  \succ 0~~ \forall {\bf u}_* \in {\mathcal S}_i,~1\le i \le I
		\Big\},
\end{equation}
where the symbol ``$\succ$'' is taken as 
``$>$'' if $i \in \mathbb I$, or as ``$\ge$'' if $i \in \widehat {\mathbb I}$; the nonzero vector ${\bf n}_{i*}$ 
denotes  
an inward-pointing normal vector of ${\mathcal S}_i$ at ${\bf u}_*$. 
\end{theorem}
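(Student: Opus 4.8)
The plan is to prove the set equality $G_* = G$ by establishing the two inclusions $G \subseteq G_*$ and $G_* \subseteq G$ separately, extending the smooth special case \cref{thm:I=1} to the piecewise-$C^1$ boundary. Throughout I would use that, since each $g_i$ is $C^1$ with nonzero gradient on $\mathcal{S}_i$, the inward-pointing normal at $\mathbf{u}_* \in \mathcal{S}_i$ may be taken as $\mathbf{n}_{i*} = \nabla g_i(\mathbf{u}_*)$, and that $\mathrm{int}(G) \neq \emptyset$ provides a Slater point for the convex region $G$.

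For $G \subseteq G_*$, I would first show each $\mathbf{n}_{i*}$ generates a supporting halfspace, i.e. $(\mathbf{v}-\mathbf{u}_*)\cdot\mathbf{n}_{i*}\ge 0$ for all $\mathbf{v}\in G$. At a point $\mathbf{u}_*$ in the relative interior of a smooth piece $\mathcal{S}_i$, the boundary $\partial G$ coincides locally with the level set $\{g_i=0\}$, so $\partial G$ is smooth there and its tangent hyperplane is the unique supporting hyperplane by \cref{lem:support} and the succeeding remark; its inward normal is $\nabla g_i(\mathbf{u}_*)=\mathbf{n}_{i*}$, which gives the halfspace inclusion. For the remaining non-smooth points of $\mathcal{S}_i$, I would approximate by smooth points $\mathbf{u}_*^{(k)}\to\mathbf{u}_*$ from the relative interior and pass to the limit using the $C^1$-continuity of $\nabla g_i$ along $\mathcal{S}_i$. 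This yields $(\mathbf{u}-\mathbf{u}_*)\cdot\mathbf{n}_{i*}\ge 0$ for all $\mathbf{u}\in G$, which already settles the indices $i\in\widehat{\mathbb I}$. To upgrade to strict inequality for $i\in\mathbb I$, I invoke condition \cref{key1124}: if some $\mathbf{u}\in G$ satisfied $(\mathbf{u}-\mathbf{u}_*)\cdot\mathbf{n}_{i*}=0$ with $i\in\mathbb I$, then $\mathbf{u}$ would lie on a hyperplane in the union appearing in \cref{key1124}, contradicting the emptiness of its intersection with $G$.

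For $G_* \subseteq G$, I would argue the contrapositive, splitting into two cases according to whether $\mathbf{u}$ lies in $\mathrm{cl}(G)=\{\,\mathbf{u}:g_i(\mathbf{u})\ge 0~\forall i\,\}$. If $\mathbf{u}\in\mathrm{cl}(G)\setminus G$, then $\mathbf{u}$ saturates some strict constraint, $g_{i_0}(\mathbf{u})=0$ with $i_0\in\mathbb I$, whence $\mathbf{u}\in\partial G\cap\partial G_{i_0}=\mathcal{S}_{i_0}$; choosing $\mathbf{u}_*=\mathbf{u}$ gives $(\mathbf{u}-\mathbf{u}_*)\cdot\mathbf{n}_{i_0*}=0$, violating the strict requirement, so $\mathbf{u}\notin G_*$. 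If instead $\mathbf{u}\notin\mathrm{cl}(G)$, I would take $\mathbf{u}_*$ to be the metric projection of $\mathbf{u}$ onto the closed convex set $\mathrm{cl}(G)$; the outward vector $\mathbf{d}=\mathbf{u}-\mathbf{u}_*\neq\mathbf{0}$ lies in the normal cone of $\mathrm{cl}(G)$ at $\mathbf{u}_*$, which under Slater's condition is the conical hull of the outward gradients $\{-\mathbf{n}_{i*}: i \text{ active at } \mathbf{u}_*\}$. Writing $\mathbf{d}=\sum_i\alpha_i(-\mathbf{n}_{i*})$ with $\alpha_i\ge 0$ not all zero and computing $0<\mathbf{d}\cdot\mathbf{d}=-\sum_i\alpha_i(\mathbf{d}\cdot\mathbf{n}_{i*})$ forces $\mathbf{d}\cdot\mathbf{n}_{i*}<0$ for at least one active index $i$; since each active index gives $g_i(\mathbf{u}_*)=0$ and hence $\mathbf{u}_*\in\mathcal{S}_i$, this exhibits a violated constraint, so $\mathbf{u}\notin G_*$.

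The main obstacle I anticipate is the treatment of non-smooth boundary points, concentrated in the last case: justifying the normal-cone decomposition of $\mathbf{d}$ into active outward gradients requires a constraint qualification, which I would supply from the Slater point guaranteed by $\mathrm{int}(G)\neq\emptyset$, while also verifying that each active index genuinely produces a point $\mathbf{u}_*\in\mathcal{S}_i$ (using $g_i(\mathbf{u}_*)=0$ together with $\mathbf{n}_{i*}\neq\mathbf{0}$). A secondary technical point is the limiting argument in the first inclusion, where the approximating smooth points must be chosen within the relative interior of $\mathcal{S}_i$ so that the $C^1$-continuity of $\nabla g_i$ applies; this is precisely where the piecewise-$C^1$ hypothesis on $\partial G$ is used.
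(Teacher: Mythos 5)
Your proposal is correct and takes essentially the same route as the paper's proof: you establish $G \subseteq G_*$ exactly as the paper does (supporting hyperplanes at smooth points of ${\mathcal S}_i$, a limiting argument at nonsmooth points, and condition \cref{key1124} to upgrade to strict inequalities for $i \in \mathbb I$), and your $G_* \subseteq G$ argument is the paper's steps~(\romannumeral2)--(\romannumeral3) reorganized as a contrapositive, with the metric projection onto ${\rm cl}(G)$ and your normal-cone decomposition of $\mathbf{d}$ into active outward gradients being precisely the paper's KKT conditions \cref{KKT-1}--\cref{KKT-45} under the Slater condition, and your Case~1 (points of ${\rm cl}(G)\setminus G$ lying on some ${\mathcal S}_{i_0}$ with $i_0 \in \mathbb I$, tested against $\mathbf{u}_* = \mathbf{u}$) matching the paper's step~(\romannumeral3). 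The only differences are presentational, and you even share the paper's level of rigor at the delicate spots it acknowledges (invoking Slater/KKT although $-g_i$ need not be convex, and verifying that active indices yield $\mathbf{u}_* \in {\mathcal S}_i$).
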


\begin{proof}
	The proof is divided into three steps. 
	
{\tt (\romannumeral1) Prove that $G \subseteq G_*$.} Let $\partial G =: \widetilde {\partial G} \cup \widehat  {\partial G}$ 
with $\widetilde {\partial G}$ denoting the set of smooth boundary points and $\widehat {\partial G}$  the set of nonsmooth boundary points.  For any ${\bf u}_* \in \widetilde {\partial G} \cap {\mathcal S}_i$, 
the hyperplane $({\bf u} - {\bf u}_*) \cdot {\bf n}_{i*} =0$ supports the region $G$, implying that 
\begin{equation}\label{key78334} 
	G \subseteq  \left\{ {\bf u} \in \mathbb R^N:~ (  {\bf u} - {\bf u}_*) \cdot {\bf n}_{i*}  \ge 0   \right\} \quad \forall {\bf u}_* \in \widetilde {\partial G} \cap {\mathcal S}_i,~ 1\le i \le I.
\end{equation}  
Next, we consider an arbitrary nonsmooth boundary point ${\bf u}_* \in \widehat {\partial G} \cap {\mathcal S}_i$. There exists a sequence of 
 smooth boundary points $\{{\bf u}_*^{(j)} \}_{j \in \mathbb N} \subset \widetilde {\partial G} \cap {\mathcal S}_i$ such that $
 \mathop {\lim }\limits_{j \to \infty }  {\bf u}_*^{(j)} = {\bf u}_*. 
$
For every ${\bf u}_*^{(j)}$, it follows from \cref{key78334} that 
\begin{equation}\label{key338}
	\big(  {\bf u} - {\bf u}_*^{(j)}  \big) \cdot {\bf n}_{i,{\bf u}_*^{(j)}}  \ge 0  \qquad \forall {\bf u} \in G,
\end{equation}
where ${\bf n}_{i,{\bf u}_*^{(j)}}$ is the inward-pointing normal vector of ${\mathcal S}_i$ 
at ${\bf u}_*^{(j)}$ satisfying $
\mathop {\lim }\limits_{j \to \infty }  {\bf n}_{i,{\bf u}_*^{(j)}} = {\bf n}_{i*}. 
$ 
Taking the limit $j \to +\infty$ in \cref{key338} gives 
\begin{equation*}
	(  {\bf u} - {\bf u}_*  ) \cdot {\bf n}_{i*} \ge 0 \quad \forall {\bf u} \in G \quad \forall {\bf u}_* \in \widehat {\partial G} \cap {\mathcal S}_i,~ 1\le i \le I, 
\end{equation*}
which along with \cref{key78334} yields 
\begin{equation}\label{key889}
	G \subseteq  \left\{ 
	{\bf u}\in \mathbb R^N:~ \big( {\bf u} - {\bf u}_*\big) \cdot {\bf n}_{i*}  \ge 0~~ \forall {\bf u}_* \in {\mathcal S}_i,~1\le i \le I
	\right\}.
\end{equation}
Based on \cref{key1124}, we then conclude that $G \subseteq G_*$.

{\tt (\romannumeral2) Prove that $G_* \subseteq {\rm cl} (G)$ by contradiction.} 
Assume that $G_* \not\subseteq {\rm cl} (G)$, namely, there exists ${\bf u}_0 \in G_*$ but ${\bf u}_0 \notin  {\rm cl} (G)$. According to the theory of convex optimization \cite{boyd2004convex}, 
the minimum of the convex function $\zeta ({\bf u}):= \frac12 \| {\bf u} - {\bf u}_0 \|^2$ over the closed convex region ${\rm cl} (G)$ is attained at certain boundary point ${\bf u}_{*0} \in \partial G$. In other words, 
${\bf u}_{*0}$ is a solution to the following optimization problem 
\begin{equation}\label{key3451}
	\begin{aligned}
	 \mathop{\rm minimize}\limits_{ {\bf u} \in {\rm cl} (G)  }~~ & \zeta ({\bf u})
	\\
	 {\rm subject~to}~~ & -g_i( {\bf u} ) < 0~~\forall i \in \mathbb I; \quad -g_i( {\bf u} ) \le 0~~ \forall i \in \widehat {\mathbb I}. 
\end{aligned}
\end{equation}
Since the function $-g_i( {\bf u} )$ is not necessarily convex, the problem \cref{key3451} is  generally not the standard form of convex optimization. 
Note that the condition ${\rm int}(G) \neq \emptyset$ ensures the 
Slater condition \cite{boyd2004convex,borwein2010convex} is satisfied. The Karush–Kuhn–Tucker (KKT) conditions \cite{boyd2004convex,borwein2010convex} tell us that there exist   
$\{\lambda_0, \lambda_1, \dots, \lambda_I \}$ 
such that  
%
%
\begin{align}\label{KKT-1}
	& 0= \nabla \zeta ({\bf u}_{*0}) - \sum_{i=1}^I \lambda_i \nabla g_i  ({\bf u}_{*0}) ,
	\\ \label{KKT-23}
	& 0 = \lambda_i g_i  ({\bf u}_{*0}), \qquad 1\le i \le I,
	\\ \label{KKT-45}
	& \lambda_i \ge 0, \qquad 0 \le i \le I.
\end{align}
Define $\mathbb I_+ :=\{ 1\le i \le I: \lambda_i >0 \}$. 
Obviously $\mathbb I_+ \neq \emptyset$; otherwise $\lambda_i=0$ for all $1\le i \le I$, so that ${\bf u}_{*0}-{\bf u}_0 = \nabla \zeta ({\bf u}_{*0}) ={\bf 0}$ which leads to the contradiction $\partial G \ni {\bf u}_{*0} = {\bf u}_0 \notin {\rm cl} (G)$. 
This also implies ${\bf u}_{*0} \neq {\bf u}_0$. 
Let ${\bf n}_{i*0}$ be the inward-pointing normal vector of ${\mathcal S}_i$ 
at ${\bf u}_{*0}$. 
Since there exist $\mu_i \ge 0$ 
such that $\nabla g_i  ({\bf u}_{*,0}) = \mu_i {\bf n}_{i*0}$, 
condition \cref{KKT-1} can be rewritten as 
\begin{equation}\label{KKT-1b}
	{\bf u}_{*0}-{\bf u}_0 = \sum_{ i \in \mathbb I_+ } \lambda_i \mu_i {\bf n}_{i*0}.  
\end{equation}
Thanks to \cref{KKT-23}, we obtain $g_i  ({\bf u}_{*0}) =0$ for all $i \in \mathbb I_+$, which along with ${\bf u}_{*0} \in \partial G$ leads to  
\begin{equation*}
	{\bf u}_{*0} \in {\mathcal S}_i =  \partial G_i \cap \partial G \qquad \forall i \in \mathbb I_+.
\end{equation*} 
Because ${\bf u}_0 \in G_*$, we then have $(  {\bf u}_0 - {\bf u}_{*0} ) \cdot {\bf n}_{i*0}  \succ 0$ for all $i \in \mathbb I_+$. 
This, together with \cref{KKT-1b} and ${\bf u}_{*0} \neq {\bf u}_{0}$, leads to a contradiction: 
\begin{align*}
	0 & > -\left \|  {\bf u}_0 - {\bf u}_{*0}  \right\|_2^2 = \left (  {\bf u}_0 - {\bf u}_{*0} \right) \cdot \left( {\bf u}_{*0} - {\bf u}_0  \right )
	\\
	& = \left (  {\bf u}_0 - {\bf u}_{*0} \right) \cdot
	\left( 
	 \sum_{ i \in \mathbb I_+ } \lambda_i  \mu_i {\bf n}_{i*0} \right )
 = \sum_{ i \in \mathbb I_+ } \lambda_i  \mu_i  \left (  {\bf u}_0 - {\bf u}_{*0} \right) \cdot {\bf n}_{i*0} \ge 0.
\end{align*} 
Thus the assumption $G_* \not\subseteq {\rm cl} (G)$ is incorrect. We have $ G_* \subseteq {\rm cl} (G)$.  

{\tt (\romannumeral3) Prove that $ G_* \subseteq G$.} 
If $\mathbb I = \emptyset$, then $G$ is a closed region and $G = {\rm cl}(G)$. 
We immediately obtain $G_* \subseteq G$ from step (\romannumeral2) of this proof. 
In the following, we focus on $\mathbb I \neq \emptyset$ and prove $ G_* \subseteq G$ by contradiction.  
Assume that 
there exists ${\bf u}_0 \in G_*$ but ${\bf u}_0 \notin G$. Because we have already shown $G_* \subseteq {\rm cl} (G)$ in step (\romannumeral2) of this proof, we then get ${\bf u}_0 \in {\rm cl} (G) \setminus G =  \partial G$. 
Note that ${\bf u}_0 \in G_*$ implies 
\begin{equation*}
	\left( {\bf u}_0 - {\bf u}_*\right) \cdot {\bf n}_{i*}  > 0 \qquad \forall {\bf u}_* \in {\mathcal S}_i,~~~\forall i \in \mathbb I,
\end{equation*}
which leads to ${\bf u}_0 \notin {\mathcal S}_i = \partial G_i \cap \partial G$ for all $i \in \mathbb I$. 
It follows that ${\bf u}_0 \notin \partial G_i$ for all $i \in \mathbb I$. 
Note for $i \in \mathbb I$, one has ${\bf u}_0 \in {\rm cl} (G) \subseteq {\rm cl} (G_i)$, 
which gives    
\begin{equation}\label{key33132}
	{\bf u}_0 \in {\rm cl} (G_i) \setminus \partial G_i = G_i \quad \forall i \in \mathbb I.
\end{equation}
On the other hand, ${\bf u}_0 \in {\rm cl} (G) \subseteq \cap_{i \in \mathbb {\widehat I}} G_i$, which along with \cref{key33132} implies    
${\bf u}_0 \in   ( \cap_{ i \in \mathbb I } G_i )\cap (\cap_{ i \in \mathbb {\widehat I} } G_i) = G$. 
This contradicts the assumption that ${\bf u}_0 \notin G$. 
Hence the assumption is incorrect, and we have $ G_* \subseteq G$.

Combining the conclusions proven in steps (\romannumeral1) and (\romannumeral3) gives $G=G_*$. 
\end{proof}

\begin{remark}
	If we replace ${\mathcal S}_i$ with ${\mathcal S}_i \cap \widetilde { \partial G }$ for $i \in \widehat{\mathbb I}$ in \cref{eq:Gs-general}, \cref{main:thm} remains valid, because for $i \in \widehat{\mathbb I}$ we have 	
	$
	\{ 
	{\bf u}: ( {\bf u} - {\bf u}_*) \cdot {\bf n}_{i*}  \ge 0~ \forall {\bf u}_* \in {\mathcal S}_i
	\} = 
	\{ 
	{\bf u}: ( {\bf u} - {\bf u}_*) \cdot {\bf n}_{i*}  \ge 0~ \forall {\bf u}_* \in {\mathcal S}_i \cap \widetilde { \partial G }
	\}.
	$
\end{remark}

\begin{figure}[thbp]
	\centering
	\includegraphics[width=0.5866\textwidth]{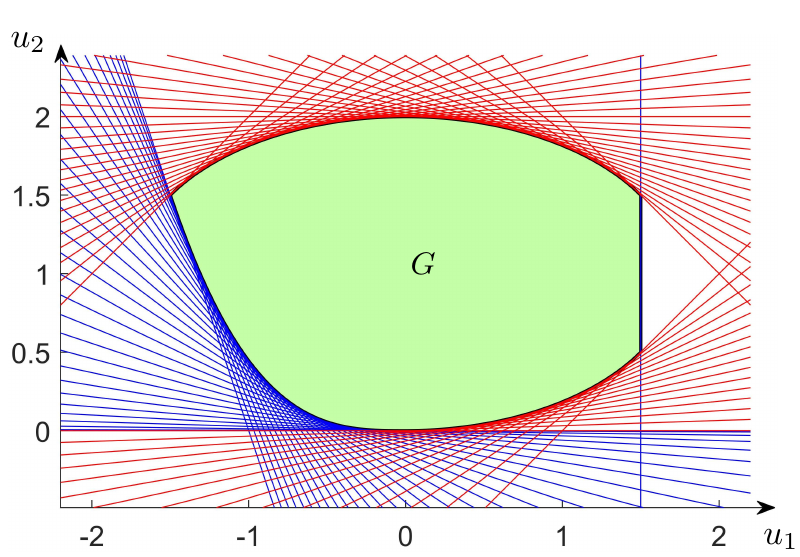} 
	\captionsetup{belowskip=-3pt}
	\caption{\small Illustration of the GQL representation for the convex region $G$ given in \cref{fig:ex1A}. 
	The blue (resp. red) lines correspond to closed (resp. open) supporting halfspaces.}
	\label{fig:ex1B}
\end{figure}

\begin{remark}
An illustration of the GQL representation \cref{eq:Gs-general}  is shown in \cref{fig:ex1B}. 
Different from \cref{eq:generalQL}, the GQL 
 representation \cref{eq:Gs-general} 
involves only at most $N$ rather than {\em all} the supporting halfspaces at 
each nonsmooth ``junction'' point. 
This makes the GQL representation \cref{eq:Gs-general} easier to formulate or construct. 
Besides, \cref{main:thm} does not require 
 $G$ to be closed or open.  
\end{remark}

\begin{remark}[Significance of GQL]\label{rem:S-GQL}
	Compared to the original form \cref{eq:ASS-G1} of the invariant region $G$ with nonlinear constraints, 
	its equivalent GQL representation $G_*$ in \cref{eq:Gs-general} 
	is described with only linear constraints. 
	  Such linearity gives 
	  the GQL representation some 
	   significant advantages over the original form  \cref{eq:ASS-G1} in 
	   analyzing and designing bound-preserving schemes; see \cref{sec:GQL-CP,sec:GQL-MMHD}. 
\end{remark}

\section{Construction of geometric quasilinearization}\label{sec:Construction}
With three methods and several examples, this section discusses how to construct GQL for convex invariant regions. 

\subsection{Methods for constructing GQL representations}\label{sec:method}
Based on \cref{main:thm,thm:minphi}, we introduce three simple effective methods for constructing the GQL representation of $G$.

\subsubsection{Gradient-based method} The first method is based on the following result, which is a direct consequence of \cref{main:thm}. 

\begin{theorem}\label{thm:Method1}
	Assume that the hypotheses of \cref{main:thm} hold and 
	\begin{equation}\label{eq:bb1}
		\nabla g_i({\bf u}_*) \neq {\bf 0}~~ \forall {\bf u}_* \in {\mathcal S}_i,~ 1\le i \le I,
	\end{equation}
then the invariant region $G$ is exactly equivalent to 
\begin{equation}\label{eq:Gs-method1}
		G_* =  \Big\{ 
	{\bf u}\in \mathbb R^N:~ \varphi_i( {\bf u}; {\bf u}_* ) \succ 0~~ \forall {\bf u}_* \in {\mathcal S}_i,~ 1\le i \le I
	\Big\},
\end{equation}
where the function $\varphi_i$ is linear with respect to $\bf u$, 
defined by 
\begin{equation} \label{eq:Method1Phi}
	\varphi_i( {\bf u}; {\bf u}_* ) := \left (   {\bf u} - {\bf u}_* \right) \cdot  \nabla g_i({\bf u}_*).
\end{equation}
\end{theorem}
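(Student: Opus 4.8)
The plan is to derive Theorem~\ref{thm:Method1} directly from Theorem~\ref{main:thm} by showing that the specific linear functions $\varphi_i({\bf u};{\bf u}_*) = ({\bf u}-{\bf u}_*)\cdot\nabla g_i({\bf u}_*)$ coincide (up to the sign convention of normal vectors) with the generic constraints $({\bf u}-{\bf u}_*)\cdot{\bf n}_{i*}$ appearing in \cref{eq:Gs-general}. First I would observe that the whole content of the theorem reduces to a single geometric fact: at any point ${\bf u}_* \in {\mathcal S}_i$, the gradient $\nabla g_i({\bf u}_*)$ is a normal vector to the level hypersurface ${\mathcal S}_i = \{g_i = 0\} \cap \partial G$, and under the nondegeneracy condition \cref{eq:bb1} it is a \emph{nonzero} normal vector. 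Since ${\mathcal S}_i$ is a $C^1$ hypersurface and $g_i$ is $C^1$ on it, this is just the standard implicit-function-theorem statement that $\nabla g_i$ is orthogonal to the tangent space of the level set.

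The key remaining step is to verify the \emph{orientation}: that $\nabla g_i({\bf u}_*)$ points \emph{inward} into $G$, so that it qualifies as the inward-pointing normal vector ${\bf n}_{i*}$ required by \cref{main:thm}. Here I would argue that $g_i$ is positive on the interior side (by the definition of $G_i$ via $g_i \succ 0$) and vanishes on ${\mathcal S}_i$, so the directional derivative of $g_i$ in any inward direction is nonnegative; more precisely, for ${\bf u}$ near ${\bf u}_*$ on the interior side one has $g_i({\bf u}) > 0 = g_i({\bf u}_*)$, forcing $\nabla g_i({\bf u}_*)$ to have nonnegative inner product with inward displacements, hence to point inward. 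Combined with \cref{eq:bb1}, which guarantees the gradient does not vanish, this identifies $\nabla g_i({\bf u}_*)$ as a valid choice of ${\bf n}_{i*}$. Substituting this choice into \cref{eq:Gs-general} immediately yields \cref{eq:Gs-method1} with $\varphi_i$ as defined in \cref{eq:Method1Phi}, and the equivalence $G = G_*$ follows directly from \cref{main:thm}.

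The main obstacle I anticipate is the orientation argument, since ${\mathcal S}_i$ is only a \emph{piece} of $\partial G$ and $G_i$ need not be convex, so one must be careful that ``inward into $G$'' agrees with ``the side where $g_i > 0$'' near points of ${\mathcal S}_i$. Because ${\mathcal S}_i = \partial G \cap \partial G_i$ consists of boundary points of $G$ at which $g_i$ vanishes, and every other constraint $g_j \succ 0$ holds strictly in a neighborhood (as ${\bf u}_*$ lies on the smooth piece associated with the $i$-th constraint), the local geometry of $G$ near ${\bf u}_*$ is governed precisely by $g_i$; this is exactly what lets me conclude the inward direction for $G$ matches the inward direction for $G_i$. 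Once this local matching is established, the rest is a clean substitution, and the nondegeneracy hypothesis \cref{eq:bb1} is what elevates the gradient from ``a normal vector that could be zero'' to ``a genuine nonzero inward normal,'' making Theorem~\ref{thm:Method1} a transparent specialization of the general framework rather than a separate argument.
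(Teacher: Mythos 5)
Your proposal is correct and matches the paper's approach exactly: the paper states \cref{thm:Method1} as a direct consequence of \cref{main:thm}, obtained precisely by taking ${\bf n}_{i*}=\nabla g_i({\bf u}_*)$, which the nondegeneracy condition \cref{eq:bb1} guarantees is a nonzero normal to ${\mathcal S}_i$. Your orientation argument (that $g_i>0$ on the $G$-side of ${\mathcal S}_i$ forces the gradient to be inward-pointing) correctly supplies the one detail the paper leaves implicit.
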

 
\cref{thm:Method1} says if $\{\nabla g_i\}$ are computable 
and satisfy \cref{eq:bb1}, then we can directly obtain the GQL representation in the form \cref{eq:Gs-method1} with \cref{eq:Method1Phi}. 

In some cases, it is, however, difficult to calculate the gradients of nonlinear functions $\{g_i\}$, e.g., the implicit functions in \cref{eq:RHD-G2} and \cref{eq:RMHD-G1}. 
This motivates us to propose the following cross-product method based on suitable parametrization of the hypersurface 
${\mathcal S}_i$. The use of parametrization can also help to  
reduce or decouple the free auxiliary variables, which is highly desirable for 
bound-preserving applications; see the examples in \cref{sec:GQLexamples} and \cref{rem:reduce}.

\subsubsection{Cross-product method} Assume that for each $i$ the hypersurface ${\mathcal S}_i$ has the following parametric expression 
\begin{equation}\label{eq:para}
	{\mathcal S}_i = \left\{  {\bf u}_* = {\bf U}_i ( {\bm \theta}_{i*} ):~  {\bm \theta}_{i*} \in \Theta_i \subseteq \mathbb R^{N-1}   \right\}, 
\end{equation}
where ${\bf U}_i$ is a $C^1$ vector function defined on the parameter domain $\Theta_i$ with ${\mathcal S}_i$ being the function range. 
Denote $\theta_{i*}^{(k)}$ as the $k$th component of 
${\bm \theta}_{i*}$. 
For each $i$, we define  
\begin{equation*}
{\bm \tau}_{i,k} ( {\bm \theta}_{i*} ) := \frac{ \partial{\bf U}_i } { \partial \theta_{i*}^{(k)} },
\qquad  1\le k \le N-1. 	
\end{equation*}
The vectors $\{ {\bm \tau}_{i,k} ( {\bm \theta}_{i*} ): 1\le k \le N-1 \}$ are ($N-1$)  tangent vectors of the hypersurface ${\mathcal S}_i$ and generate its local tangent
space at ${\bf u}_*$. 
Then, the normal vector of ${\mathcal S}_i$ at ${\bf u}_*$ can be constructed using the $(N-1)$-ary analogue of the cross product (cf.~\cite[Pages 83--85]{Spivak1965Calculus}) 
in $\mathbb R^N$: 
\begin{align*}
& {\bf n}_{i*} = \delta_{i*}  \bigwedge\limits_{k = 1}^{N-1} {\bm \tau}_{i,k} ( {\bm \theta}_{i*} ) := \delta_{i*}  {\bm \tau}_{i,1} ( {\bm \theta}_{i*} ) 
\times {\bm \tau}_{i,2} ( {\bm \theta}_{i*} )  \times \cdots \times {\bm \tau}_{i,N-1} ( {\bm \theta}_{i*} ),
\end{align*}
where $\delta_{i*}$ is a nonzero factor which may be used to simplify the final formula  or/and to adjust the sign such that ${\bf n}_{i*}$ is directed towards the interior of $G$.

As a direct consequence of \cref{main:thm}, the following result holds. 

\begin{theorem}\label{thm:Method2}
	Suppose the hypotheses of \cref{main:thm} hold and 
	\begin{equation*}
		\bigwedge\limits_{k = 1}^{N-1} {\bm \tau}_{i,k} ( {\bm \theta}_{i*} ) \neq {\bf 0}~~ \forall {\bm \theta}_{i*} \in \Theta_i,~ 1\le i \le I,
	\end{equation*}
	then the region $G$ is exactly equivalent to 
	\begin{equation}\label{eq:Gs-method2}
			G_* =  \Big\{ 
			{\bf u}\in \mathbb R^N:~ \varphi_i( {\bf u}; {\bm \theta}_{i*} ) \succ 0~~ \forall {\bm \theta}_{i*} \in \Theta_i,~ 1\le i \le I
			\Big\},
	\end{equation}
	where the function $\varphi_i$ is linear with respect to $\bf u$, defined by 
	\begin{equation} \label{eq:Method2Phi1}
		\varphi_i( {\bf u};  {\bm \theta}_{i*}  ) := \Big ( {\bf u} - {\bf U}_i ( {\bm \theta}_{i*} )  \Big) \cdot \left(  \delta_{i*}  \bigwedge\limits_{k = 1}^{N-1} {\bm \tau}_{i,k} ( {\bm \theta}_{i*} )  \right).
	\end{equation}
\end{theorem}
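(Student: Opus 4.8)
The plan is to verify that the cross-product construction supplies precisely the inward-pointing normal vectors demanded by \cref{main:thm}, after which the conclusion follows by direct substitution. First I would invoke the defining property of the $(N-1)$-ary cross product in $\mathbb R^N$ (cf.~\cite[Pages 83--85]{Spivak1965Calculus}): the vector $\bigwedge_{k=1}^{N-1}{\bm \tau}_{i,k}({\bm \theta}_{i*})$ is orthogonal to each of its $N-1$ arguments ${\bm \tau}_{i,k}({\bm \theta}_{i*})$, and it vanishes if and only if these arguments are linearly dependent. Since the $\{{\bm \tau}_{i,k}\}$ are by definition the partial derivatives $\partial {\bf U}_i / \partial \theta_{i*}^{(k)}$ of the $C^1$ parametrization ${\bf U}_i$, they span the tangent space of ${\mathcal S}_i$ at the point ${\bf u}_* = {\bf U}_i({\bm \theta}_{i*})$. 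Hence their cross product is orthogonal to the entire tangent space and is therefore a normal vector of ${\mathcal S}_i$ at ${\bf u}_*$.

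Next I would use the standing hypothesis $\bigwedge_{k=1}^{N-1}{\bm \tau}_{i,k}({\bm \theta}_{i*}) \neq {\bf 0}$. By the characterization above, this is equivalent to the $\{{\bm \tau}_{i,k}\}$ being linearly independent, so the parametrization ${\bf U}_i$ is regular and the $N-1$ tangent vectors genuinely span the $(N-1)$-dimensional tangent space (rather than a proper subspace). Consequently the cross product is a nonzero normal, and the nonzero scalar $\delta_{i*}$ may be chosen with the correct sign so that ${\bf n}_{i*} = \delta_{i*}\bigwedge_{k=1}^{N-1}{\bm \tau}_{i,k}({\bm \theta}_{i*})$ is an \emph{inward-pointing} normal of ${\mathcal S}_i$ at ${\bf u}_*$, exactly the object appearing in the hypotheses of \cref{main:thm}. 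With this identification, the function in \cref{eq:Method2Phi1} reads $\varphi_i({\bf u};{\bm \theta}_{i*}) = ({\bf u} - {\bf u}_*)\cdot {\bf n}_{i*}$ with ${\bf u}_* = {\bf U}_i({\bm \theta}_{i*})$, which is precisely the linear form occurring in the GQL representation \cref{eq:Gs-general}.

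To close the argument I would note that, since ${\bf U}_i$ maps $\Theta_i$ onto ${\mathcal S}_i$, quantifying over ${\bm \theta}_{i*} \in \Theta_i$ is the same as quantifying over ${\bf u}_* \in {\mathcal S}_i$; thus the set $G_*$ of \cref{eq:Gs-method2} coincides with the set of \cref{eq:Gs-general}, and \cref{main:thm} immediately gives $G_* = G$. Because the theorem is advertised as a direct consequence of \cref{main:thm}, there is no serious analytic obstacle; the one point requiring genuine care is the orientation bookkeeping, namely that $\delta_{i*}$ can indeed be selected so that ${\bf n}_{i*}$ points into $G$ for the relevant ${\bm \theta}_{i*}$. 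This is benign here, since \cref{main:thm} only requires, for each fixed ${\bf u}_*$, \emph{some} inward normal, and the cross product pins down the normal direction up to sign; the substantive hypothesis is simply the nonvanishing condition, which is exactly what guarantees that the tangent vectors span the tangent space.
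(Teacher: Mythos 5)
Your proposal is correct and follows exactly the route the paper intends: the paper states \cref{thm:Method2} as a direct consequence of \cref{main:thm} without a written proof, and the implicit argument is precisely yours — the $(N-1)$-ary cross product of the tangent vectors $\{{\bm \tau}_{i,k}\}$ of the regular $C^1$ parametrization is a nonzero normal of ${\mathcal S}_i$ at ${\bf u}_* = {\bf U}_i({\bm \theta}_{i*})$, the sign factor $\delta_{i*}$ makes it inward-pointing, and surjectivity of ${\bf U}_i$ onto ${\mathcal S}_i$ identifies \cref{eq:Gs-method2} with \cref{eq:Gs-general}. Your added care on the two points the paper leaves tacit — that nonvanishing of the cross product is equivalent to linear independence of the tangent vectors (regularity), and that only \emph{some} inward normal at each boundary point is needed so the sign choice is benign — is exactly the right bookkeeping.
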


\begin{remark}In many cases, there exists a natural (usually physics-based)  
	parametrization of the hypersurface 
	${\mathcal S}_i$, typically with the primitive quantities as parametric variables; 
	see the examples in \cref{sec:GQLexamples}. The advantages of using the 
	 parametric form \cref{eq:para} in the GQL representation will become more clear in those examples and the bound-preserving applications in \cref{sec:GQL-CP,sec:GQL-MMHD}. 	
\end{remark}

\subsubsection{Constructive method} 
For completeness, we also summarize 
the constructive approach and its variant as our third method. 
Recall that \cref{thm:minphi} has told us: if we can construct linear functions $\varphi_i( {\bf u}; {\bm \theta}_{i*} )$, $1 \le i \le I$, such that \cref{key656} holds, then 
the GQL representation of $G$ is \cref{eq:Gs}. 
The constructive approach does not require 
the assumptions in \cref{thm:Method1,thm:Method2}, 
but often needs some empirical trial-and-error techniques to find the qualified $\{\varphi_i\}$. 
In practice, one can use the proposed three methods in a hybrid way:  
first formally formulate $\{\varphi_i\}$ via either \cref{eq:Method1Phi} or \cref{eq:Method2Phi1} 
and then verify \cref{key656}.  
Such a hybrid approach is efficient, as it may exempt
the assumptions in \cref{thm:Method1,thm:Method2} and also avoid the trial-and-error procedure.

\subsection{Examples of GQL representations}\label{sec:GQLexamples} 
We give several examples for constructing GQL representations of convex invariant regions.

\subsubsection*{Example 1: Euler and Navier--Stokes systems}
	\begin{theorem}\label{thm:EulerNS-G1}
	For the 1D Euler and Navier--Stokes systems, the	GQL representation of the invariant region $G$ in \cref{eq:EulerNS-G1} is given by 
			\begin{equation}\label{Gs1:EulerNS}
		G_*= \left  \{ {\bf u} = ( \rho, m, E  )^\top:~\rho>0,~~ \varphi( {\bf u}; v_* ) >0~~\forall v_* \in \mathbb R \right  \}
	\end{equation}
	with $\varphi( {\bf u}; v_* ) := E - m v_* + \rho \frac{v_*^2}2$ being linearly dependent on   $\bf u$.  
	\end{theorem}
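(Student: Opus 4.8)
The plan is to invoke \cref{thm:minphi}, which reduces the verification of a GQL representation to a single minimization identity per constraint. The region $G$ in \cref{eq:EulerNS-G1} is cut out by one linear constraint $g_1({\bf u}) = \rho > 0$ and one nonlinear constraint $g_2({\bf u}) = E - \frac{m^2}{2\rho} > 0$, so $G_L = \{{\bf u} : \rho > 0\}$. By the remark following \cref{thm:minphi}, the linear constraint $\rho>0$ carries into the GQL representation unchanged, and the only substantive task is to show that the affine function $\varphi({\bf u}; v_*) = E - m v_* + \rho \frac{v_*^2}{2}$ satisfies the identity \cref{key656} for $g_2$, with a strictly positive multiplier on $G_L$.

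First I would fix ${\bf u} = (\rho, m, E)^\top \in G_L$, so that $\rho > 0$, and minimize $\varphi({\bf u}; v_*)$ over $v_* \in \mathbb{R}$. Since $\varphi$ is quadratic in $v_*$ with positive leading coefficient $\rho/2$, its unique minimizer is $v_* = m/\rho$, which is exactly the fluid velocity $v$; substituting this value collapses $\varphi$ to $E - \frac{m^2}{2\rho} = g_2({\bf u})$. Hence $\min_{v_* \in \mathbb{R}} \varphi({\bf u}; v_*) = g_2({\bf u})$, i.e. \cref{key656} holds with multiplier $\lambda_2({\bf u}) \equiv 1 > 0$. Together with the trivial identity for $g_1$, \cref{thm:minphi} then gives $G_* = G$ and identifies $G_*$ as the GQL representation, completing the proof.

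For exposition I would also indicate how $\varphi$ is discovered rather than produced by fiat, by running the cross-product method of \cref{thm:Method2} on the smooth boundary piece $\mathcal{S} = \{{\bf u} : \rho > 0,\, g_2({\bf u}) = 0\}$. Using the natural physics-based parametrization ${\bf U}(\rho_*, v_*) = (\rho_*, \rho_* v_*, \frac{1}{2}\rho_* v_*^2)^\top$ of $\mathcal{S}$, the tangent vectors $\partial_{\rho_*}{\bf U}$ and $\partial_{v_*}{\bf U}$ have a cross product proportional to $(\frac{1}{2}v_*^2, -v_*, 1)^\top$, which is inward-pointing; forming $({\bf u} - {\bf U}) \cdot {\bf n}_*$ and observing that every term containing $\rho_*$ cancels recovers exactly $\varphi({\bf u}; v_*)$. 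This cancellation is what collapses the two natural surface parameters to the single free auxiliary variable $v_*$, an instance of the parameter reduction advertised in \cref{sec:method}.

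The computation is elementary, so I anticipate no serious obstacle. The two points meriting care are: that the identity \cref{key656} and the positivity of the multiplier need only hold on $G_L = \{\rho > 0\}$ rather than on all of $\mathbb{R}^3$, which is precisely what makes $\varphi(\cdot\,; v_*)$ convex in $v_*$ and thereby forces the minimizer $v_* = m/\rho$ to be well-defined and to return $g_2$; and, in the optional construction, fixing the sign of the normal vector so that it genuinely points into $G$, which is immediate here from its positive $E$-component. Everything else reduces to the one-line quadratic minimization.
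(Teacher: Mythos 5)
Your proposal is correct and coincides with the paper's own proof, which establishes \cref{Gs1:EulerNS} by the constructive method via \cref{thm:minphi} through the identity $\varphi({\bf u};v_*)=\frac{\rho}{2}\left(v_*-\frac{m}{\rho}\right)^2+g({\bf u})$, so that $\min_{v_*\in\mathbb R}\varphi({\bf u};v_*)=g({\bf u})$ for $\rho>0$ with multiplier $1$. Your supplementary cross-product derivation likewise matches the paper's part (\romannumeral2), including the parametrization \cref{key98332} and the cancellation of $\rho_*$ that reduces to the single auxiliary variable $v_*$.
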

	
\begin{proof}	
    We respectively use the three methods proposed in \cref{sec:method} to derive the GQL representation for this example.  
	Note the first constraint in \cref{eq:EulerNS-G1} is linear.  
	
{\tt (\romannumeral1) Gradient-based method.} For the second constraint in \cref{eq:EulerNS-G1}, the gradient  
		$\nabla g({\bf u}) = ( \frac{m^2}{2\rho^2}, -\frac{m}{\rho}, 1 )^\top$, and the associated boundary hypersurface ${\mathcal S} = \{ 
		{\bf u}_* = (\rho_*, m_*, E_* )^\top: \rho_*>0,~g({\bf u}_*)=0 \}$ can be parameterized as 
\begin{align} \label{key98332}
			{\mathcal S} = 
		 \left\{ 
		{\bf u}_* = \left(\rho_*, \rho_* v_* , \frac{\rho_* }2 v_*^2 \right)^\top: \rho_*>0,~v_* \in \mathbb R \right\}.
\end{align}
		For ${\bf u}_* \in {\mathcal S}$ and ${\bf u}=(\rho, m, E)^\top$, we have 
		\begin{align}\label{keyddd2}  
			({\bf u}-{\bf u}_*) \cdot \nabla g({\bf u}_*) 
			= (\rho - \rho_*) \frac{ v_*^2 }2 + (m-\rho_* v_*) (-v_*) 
			+ E - \frac{\rho_*}2  v_*^2
			  =  \varphi( {\bf u}; v_* ). 
		\end{align}
		By \cref{thm:Method1}, we obtain the GQL representation \cref{Gs1:EulerNS} of $G$. 
	
{\tt (\romannumeral2) Cross-product method.} Based on the parametrization of $\mathcal S$ in \cref{key98332}, we can compute the normal vector of ${\mathcal S}$ at ${\bf u}_*$ by cross product 
		\begin{equation*}
			\frac{ \partial {\bf u}_* } { \partial \rho_* } 
			\times \frac{ \partial {\bf u}_* } { \partial v_* } 
			= \left( 1, v_*, \frac12 v_*^2 \right)^\top 
			\times \left( 0, \rho_*, \rho_* v_*\right)^\top 
			= \rho_*  \left( \frac12 v_*^2, -v_*, 1 \right)^\top 
			=: \frac{1}{\delta_*} 
			{\bf n}_*, 
		\end{equation*}
	where $\delta_*=1/\rho_*$ is a nonzero factor.  By \cref{thm:Method2} and $({\bf u}-{\bf u}_*) \cdot {\bf n}_*=\varphi( {\bf u}; v_* ) $, we 
	 get the GQL representation \cref{Gs1:EulerNS}. 
	
{\tt (\romannumeral3) Constructive method.}  
		Observe that 
		\begin{equation}\label{eq:constuctive11}
			\varphi( {\bf u}; v_* ) = E - m v_* + \rho \frac{v_*^2}2 
			= \frac{\rho}{2} \left( v_* - \frac{m}{\rho} \right)^2 + g({\bf u}) \ge g({\bf u}),
		\end{equation}
	which implies $\min_{  v_* \in \mathbb R }
	\varphi( {\bf u}; v_* )  = g ({\bf u})$ for $\rho>0$. According to \cref{thm:minphi}, we also achieve the GQL representation \cref{Gs1:EulerNS}. 
\end{proof}

\begin{remark}\label{rem:reduce}
Note 
only one free auxiliary variable $v_*$ explicitly appears in the GQL representation \cref{Gs1:EulerNS}.  This is benefited from the use of parametric form \cref{key98332}. 	
\end{remark}

\begin{remark}[Physical Interpretation of GQL]\label{rem:physical}
	It seems that the linear function $\varphi( {\bf u}; v_* )$ plays an energy-like role from a physical point of view. For the present example, 
	$\varphi( {\bf u}; v_* ) = \frac{1}{2} \rho ( v - v_* )^2 + \rho e$, which represents 
	the total energy in the reference frame moving at a velocity of $v_*$. 
\end{remark}

	We now utilize the cross-product method to construct the GQL representation of the invariant region $\widetilde G$ in \cref{eq:EulerNS-G2}, where the minimum entropy principle $S({\bf u}) :=p \rho^{-\Gamma} \ge S_{min}$ is also included. 

	\begin{theorem}
	For the 1D Euler and Navier--Stokes systems, the GQL representation of the invariant region $\widetilde G$ in \cref{eq:EulerNS-G2} is given by 
	\begin{equation}\label{Gs2:EulerNS}
		\widetilde G_*= \Big  \{ {\bf u} = ( \rho, m, E  )^\top:~\rho>0,~~ \widetilde \varphi( {\bf u}; \rho_*, v_* ) \ge 0~~\forall \rho_* \in \mathbb R^+~~  \forall v_* \in \mathbb R \Big \}
	\end{equation}
	with  $\widetilde \varphi( {\bf u}; \rho_*, v_* ) := {\bf u} \cdot {\bf n}_*  + S_{min} \rho_*^\Gamma$ and ${\bf n}_* := \big( \frac{v_*^2}2 - \frac{S_{min}\Gamma \rho_*^{\Gamma -1} }{\Gamma -1},~-v_*,~ 1   \big)^\top$.  
\end{theorem}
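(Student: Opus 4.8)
The plan is to follow the cross-product recipe of \cref{thm:Method2}, exactly as was done for the simpler region in \cref{thm:EulerNS-G1}, and then—to keep the hypotheses light—to confirm the outcome through the minimum identity \cref{key656} so that \cref{thm:minphi} closes the argument. First I would note that the linear constraint $\rho>0$ requires no modification and carries over verbatim into the GQL representation, so all the work concerns the single nonlinear constraint $\widetilde g({\bf u})\ge 0$ and its smooth boundary piece ${\mathcal S}=\{{\bf u}_*:\rho_*>0,\ \widetilde g({\bf u}_*)=0\}$. The natural physics-based parametrization uses the primitive variables $(\rho_*,v_*)$: setting $m_*=\rho_* v_*$ and solving $\widetilde g=0$ for $E_*$ gives ${\bf U}(\rho_*,v_*)=(\rho_*,\ \rho_* v_*,\ \tfrac{\rho_* v_*^2}{2}+\tfrac{S_{min}\rho_*^{\Gamma}}{\Gamma-1})^\top$, with parameter domain $\Theta=\mathbb R^+\times\mathbb R$.

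Next I would compute the two tangent vectors $\partial{\bf U}/\partial\rho_*$ and $\partial{\bf U}/\partial v_*$ and take their cross product; the routine computation returns a vector proportional to $(\tfrac{v_*^2}{2}-\tfrac{S_{min}\Gamma\rho_*^{\Gamma-1}}{\Gamma-1},\ -v_*,\ 1)^\top$, and choosing the nonzero factor $\delta_*=1/\rho_*$ yields exactly the claimed ${\bf n}_*$. Two small checks belong here: the third component equals $1$, so ${\bf n}_*\neq{\bf 0}$ for every $(\rho_*,v_*)$ (the cross-product hypothesis of \cref{thm:Method2}); and since $\widetilde g$ is strictly increasing in $E$, raising $E$ moves into ${\rm int}(\widetilde G)$, confirming that ${\bf n}_*$ is inward-pointing. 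Substituting into \cref{eq:Method2Phi1} and using that ${\bf U}\cdot{\bf n}_*=-S_{min}\rho_*^{\Gamma}$ collapses $\widetilde\varphi=({\bf u}-{\bf U})\cdot{\bf n}_*$ to ${\bf u}\cdot{\bf n}_*+S_{min}\rho_*^{\Gamma}$, matching \cref{Gs2:EulerNS}.

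To conclude I would verify \cref{key656} directly, which has the advantage of bypassing the piecewise-$C^1$ and condition-\cref{key1124} bookkeeping needed for \cref{thm:Method2}. Minimizing $\widetilde\varphi$ over $v_*\in\mathbb R$ is a one-variable quadratic with minimizer $v_*=m/\rho$, producing $E-\tfrac{m^2}{2\rho}+S_{min}(\rho_*^{\Gamma}-\rho\tfrac{\Gamma}{\Gamma-1}\rho_*^{\Gamma-1})$. Then minimizing over $\rho_*>0$—the genuinely nontrivial step, since it involves the power $\rho_*^{\Gamma}$ and relies on $\Gamma>1$ and $S_{min}>0$—gives the interior minimizer $\rho_*=\rho$ and the value $-\tfrac{S_{min}\rho^{\Gamma}}{\Gamma-1}$. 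Collecting terms shows $\min_{\rho_*>0,\,v_*\in\mathbb R}\widetilde\varphi=\tfrac{\rho^{\Gamma}}{\Gamma-1}\widetilde g({\bf u})$, i.e.\ \cref{key656} holds with $\lambda({\bf u})=\rho^{\Gamma}/(\Gamma-1)>0$ on $G_L=\{\rho>0\}$; \cref{thm:minphi} then delivers $\widetilde G_*=\widetilde G$.

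The hard part will be the $\rho_*$-minimization in the last paragraph: one must confirm that the stationary point $\rho_*=\rho$ is a genuine global minimum on $(0,\infty)$ (checking the sign change of the derivative, which uses $\Gamma>1$), and keep careful track of the sign of $S_{min}$, since a wrong sign would flip minimization into maximization and break the identity. If instead one prefers the direct route via \cref{thm:Method2}, the analogous obstacle migrates to verifying that the boundary of $\widetilde G$ is piecewise $C^1$ and that condition \cref{key1124} holds—here trivially, because the only strict constraint is the linear one $\rho>0$, whose region is a convex half-space.
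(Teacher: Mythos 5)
Your proposal is correct, and your computations check out: the parametrization ${\bf u}_*=\big(\rho_*,\ \rho_* v_*,\ \tfrac12\rho_* v_*^2+\tfrac{S_{min}\rho_*^\Gamma}{\Gamma-1}\big)^\top$, the cross product equal to $\rho_*{\bf n}_*$, and the identity ${\bf u}_*\cdot{\bf n}_*=-S_{min}\rho_*^\Gamma$ are exactly what the paper's own proof uses. The difference is in how the argument is closed: the paper stops after the cross-product computation and invokes \cref{thm:Method2} directly, whereas you additionally verify the minimum identity \cref{key656}---minimizing $\widetilde\varphi$ first in $v_*$ (minimizer $v_*=m/\rho$, valid since $\rho>0$ on $G_L$) and then in $\rho_*$ (global minimizer $\rho_*=\rho$, since the derivative $S_{min}\Gamma\rho_*^{\Gamma-2}(\rho_*-\rho)$ changes sign only once, using $S_{min}>0$ and $\Gamma>1$), obtaining $\min\widetilde\varphi=\tfrac{\rho^\Gamma}{\Gamma-1}\,\widetilde g({\bf u})$ so that \cref{thm:minphi} applies with $\lambda({\bf u})=\rho^\Gamma/(\Gamma-1)>0$. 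This is precisely the hybrid strategy the paper advocates in its discussion of the constructive method but does not carry out for this theorem, and it buys something real: \cref{thm:minphi} needs no geometric hypotheses, so your proof is self-contained and does not lean on the piecewise-$C^1$ boundary assumption, on condition \cref{key1124}, or even on a priori convexity of $\widetilde G$ (convexity becomes a consequence, since $\widetilde G_*$ is automatically convex). It also surfaces a hypothesis the bare statement leaves implicit: $S_{min}>0$ is genuinely needed, as you flag---for $S_{min}<0$ the infimum over $\rho_*$ is $-\infty$ and \cref{Gs2:EulerNS} would fail, consistent with $\widetilde G$ then losing convexity; physically $S_{min}>0$ holds since $S=p\rho^{-\Gamma}>0$ for admissible initial data. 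The only cost relative to the paper's short invocation of \cref{thm:Method2} is the explicit two-stage minimization, which your derivative-sign check handles correctly.
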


\begin{proof}
	We only need to handle the nonlinear constraint $\widetilde g({\bf u})>0$ in \cref{eq:EulerNS-G2}, with the boundary hypersurface 
	$
	\widetilde {\mathcal S} := \{ {\bf u}_*=(\rho_*,m_*,E_*): \rho_* >0,~\widetilde g({\bf u}_*)=0 \}
	$. Motivated from the equivalence of $\widetilde g({\bf u})=0$ and $p = S_{min}  \rho^{\Gamma}$, we find a natural physics-based parametrization of $\widetilde {\mathcal S}$ as 
	\begin{equation*}
		\widetilde {\mathcal S} = \left\{  
		{\bf u}_*= \left( \rho_*,~\rho_* v_*,~\frac12 \rho_* v_*^2 + 
		\frac{ S_{min}  \rho_*^\Gamma } {\Gamma -1}   \right)^\top:~ \rho_* >0,~v_* \in \mathbb R 
		\right\}. 
	\end{equation*}
	Then we can derive the normal vector ${\bf n}_*$ of ${\mathcal S}$ at ${\bf u}_*$ by cross product: 
	\begin{align*}
		\frac{ \partial {\bf u}_* }{\partial \rho_* } \times  \frac{ \partial {\bf u}_* }{\partial v_* }  = \left( 1,~v_*,~\frac{S_{min}\Gamma \rho_*^{\Gamma -1} }{\Gamma -1} + \frac{ v_*^2 }2  \right)^\top \times  \left( 0,~\rho_*,~\rho_* v_*  \right)^\top 
		= \rho_* {\bf n}_*.
	\end{align*}
	By \cref{thm:Method2} and $({\bf u}-{\bf u}_*) \cdot {\bf n}_* = \widetilde \varphi( {\bf u}; \rho_*, v_* )$, we 
	obtain the GQL representation \cref{Gs2:EulerNS}.
\end{proof}

\subsubsection*{Example 2: M1 model of radiative transfer} 
	
\begin{theorem}\label{thm:M1}
For the gray M1 moment system of radiative transfer, the GQL representation of the invariant region $G$ in \cref{eq:M1-G} is given by 
\begin{equation}\label{Gs:M1}
G_*= \left  \{ {\bf u} = ( E_r, {\mathbfcal F}_r  )^\top \in \mathbb R^4:~E_r - {\mathbfcal F}_r  \cdot {\bm \theta}_* \ge 0~~\forall {\bm \theta}_* \in \mathbb S_1({\bf 0}) \right  \}
\end{equation}
with $\mathbb S_1({\bf 0}) :=\{ {\bm x} \in \mathbb R^3: \| {\bm x} \|=1 \}$ denoting the unit 3D sphere. 
\end{theorem}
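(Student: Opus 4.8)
The plan is to invoke the constructive method, i.e.\ \cref{thm:minphi}, since it requires neither differentiability of the boundary nor an explicit parametrization and thereby sidesteps the one delicate feature of this region. The region $G$ in \cref{eq:M1-G} carries a single (non-strict) constraint $g({\bf u}) = E_r - \| {\mathbfcal F}_r \| \ge 0$, which is nonlinear; there are no linear constraints, so $G_L = \mathbb R^4$. I would take as candidate the function
\[
\varphi( {\bf u}; {\bm \theta}_* ) := E_r - {\mathbfcal F}_r \cdot {\bm \theta}_*, \qquad {\bm \theta}_* \in \mathbb S_1({\bf 0}),
\]
which is manifestly affine in ${\bf u} = ( E_r, {\mathbfcal F}_r )^\top$, and then verify the key identity \cref{key656} with $\lambda \equiv 1$.

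The entire content is the computation of the minimum over the sphere: for fixed ${\bf u}$,
\[
\min_{ {\bm \theta}_* \in \mathbb S_1({\bf 0}) } \varphi( {\bf u}; {\bm \theta}_* ) = E_r - \max_{ {\bm \theta}_* \in \mathbb S_1({\bf 0}) } {\mathbfcal F}_r \cdot {\bm \theta}_* = E_r - \| {\mathbfcal F}_r \| = g({\bf u}),
\]
where the middle equality is Cauchy--Schwarz (the maximum of a linear functional over the unit sphere equals the norm of its coefficient vector, attained at ${\bm \theta}_* = {\mathbfcal F}_r / \| {\mathbfcal F}_r \|$ when ${\mathbfcal F}_r \neq {\bf 0}$, and trivially equal to $0$ when ${\mathbfcal F}_r = {\bf 0}$). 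Since $\lambda \equiv 1 > 0$ on $G_L = \mathbb R^4$, \cref{thm:minphi} immediately yields $G_* = G$ and identifies $G_*$ as the GQL representation.

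As a geometric cross-check one may instead apply \cref{thm:Method1}. The set $G$ is the solid Lorentz (``ice-cream'') cone, whose boundary hypersurface $\mathcal S = \{ {\bf u}_* : E_{r*} = \| {\mathbfcal F}_{r*} \| \}$ is smooth away from the apex ${\mathbfcal F}_{r*} = {\bf 0}$; there $\nabla g({\bf u}_*) = \big( 1, -{\mathbfcal F}_{r*}/\| {\mathbfcal F}_{r*} \| \big)^\top$, and writing the inward normal direction as ${\bm \theta}_* := {\mathbfcal F}_{r*}/\| {\mathbfcal F}_{r*} \| \in \mathbb S_1({\bf 0})$ one checks, using ${\mathbfcal F}_{r*} \cdot {\bm \theta}_* = \| {\mathbfcal F}_{r*} \|$, that $( {\bf u} - {\bf u}_* ) \cdot \nabla g({\bf u}_*) = E_r - {\mathbfcal F}_r \cdot {\bm \theta}_*$, recovering the same $\varphi$. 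This makes transparent the geometry behind the representation: the supporting halfspaces of the cone are exactly $\{ E_r \ge {\mathbfcal F}_r \cdot {\bm \theta}_* \}$ as ${\bm \theta}_*$ sweeps the unit sphere.

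The only subtlety --- and the reason I favor the constructive route --- is the apex ${\mathbfcal F}_r = {\bf 0}$, a nonsmooth boundary point at which the supporting hyperplanes are not unique (the full sphere of normal directions supports $G$ there). A proof built on \cref{thm:Method1} or \cref{main:thm} must therefore argue that the normals accumulated from the smooth part of $\mathcal S$ already account for the vertex, which they do because the closure of $\{ {\mathbfcal F}_{r*}/\| {\mathbfcal F}_{r*} \| : {\mathbfcal F}_{r*} \neq {\bf 0} \}$ is all of $\mathbb S_1({\bf 0})$; by contrast, \cref{thm:minphi} needs no such case distinction, since the Cauchy--Schwarz identity above holds verbatim at ${\mathbfcal F}_r = {\bf 0}$. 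Thus the minimum identity does all the work and no genuine obstacle remains.
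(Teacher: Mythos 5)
Your proof is correct and takes essentially the same route as the paper's: the constructive method of \cref{thm:minphi} with $\varphi({\bf u};{\bm \theta}_*)=E_r-{\mathbfcal F}_r\cdot{\bm \theta}_*$ and the Cauchy--Schwarz identity $\min_{{\bm \theta}_*\in\mathbb S_1({\bf 0})}\varphi({\bf u};{\bm \theta}_*)=g({\bf u})$, including the same equality-case analysis at ${\mathbfcal F}_r={\bf 0}$. Your gradient-based cross-check and the discussion of the cone's apex are sound but supplementary; the paper, like your main argument, lets the minimum identity do all the work.
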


\begin{proof}
The constructive method is used for this example. The Cauchy–Schwarz inequality yields 
\begin{equation*}
	\varphi ( {\bf u}; {\bm \theta}_* ) := E_r - {\mathbfcal F}_r  \cdot {\bm \theta}_* \ge 
	g({\bf u}) \qquad \forall {\bm \theta}_* \in \mathbb S_1({\bf 0}),
\end{equation*}
where equality holds for ${\mathbfcal F}_r \neq {\bm 0}$ with ${\bm \theta}_* = {\mathbfcal F}_r / \| {\mathbfcal F}_r \| $  and for ${\mathbfcal F}_r = {\bm 0}$ with any ${\bm \theta}_*$. Thus, $\min_{ {\bm \theta}_* \in \mathbb S_1(0)  } \varphi ( {\bf u}; {\bm \theta}_* ) = g({\bf u})$, and by \cref{thm:minphi} we get the GQL representation \cref{Gs:M1}. 
\end{proof}

\subsubsection*{Example 3: Relativistic hydrodynamic system}

\begin{theorem}\label{thm:RHD1}
	For the 1D RHD system \cref{eq:1DRHD}, the GQL representation of the invariant region $G$ in \cref{eq:RHD-G1} is given by 
	\begin{equation}\label{Gs1:RHD}
		G_*= \left  \{ {\bf u} = ( D, m, E  )^\top: D>0,~\varphi({\bf u}; v_*)>0~~\forall v_*\in (-1,1) \right  \}
	\end{equation}
with $\varphi({\bf u}; v_*) := E-m v_* - D\sqrt{1-v_*^2}$ being a linear function of $\bf u$. 
\end{theorem}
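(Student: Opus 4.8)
The plan is to mirror the treatment of \cref{thm:EulerNS-G1} and obtain \cref{Gs1:RHD} most directly through the constructive method of \cref{thm:minphi}, since the only nonlinear constraint $g({\bf u}) = E - \sqrt{D^2+m^2} > 0$ is scalar and admits a clean pointwise minimization. The linear constraint $D>0$ is left untouched. For the nonlinear one, I take the candidate linear function $\varphi({\bf u};v_*) = E - m v_* - D\sqrt{1-v_*^2}$, which is manifestly affine in ${\bf u} = (D,m,E)^\top$ with coefficients $(-\sqrt{1-v_*^2},\, -v_*,\, 1)$ depending only on the free auxiliary variable $v_*\in(-1,1)$, and I verify the key identity \cref{key656}, namely $\min_{v_*\in(-1,1)} \varphi({\bf u};v_*) = g({\bf u})$ for every ${\bf u}$ with $D>0$.

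The core computation is a one-variable minimization. Differentiating $\varphi$ in $v_*$ gives $\partial_{v_*}\varphi = -m + D v_*/\sqrt{1-v_*^2}$, and since $\partial_{v_*}^2 \varphi = D(1-v_*^2)^{-3/2} > 0$ when $D>0$, the map $v_*\mapsto \varphi({\bf u};v_*)$ is strictly convex on $(-1,1)$. Setting the first derivative to zero yields the unique minimizer $v_*^{\min} = m/\sqrt{D^2+m^2}$, which satisfies $|v_*^{\min}|<1$ precisely because $D>0$, so it lies in the open interval. Substituting back and using $\sqrt{1-(v_*^{\min})^2} = D/\sqrt{D^2+m^2}$ gives $\varphi({\bf u};v_*^{\min}) = E - \sqrt{D^2+m^2} = g({\bf u})$, establishing \cref{key656} with $\lambda({\bf u})\equiv 1>0$. \cref{thm:minphi} then delivers $G_* = G$ and the representation \cref{Gs1:RHD}. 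Here the open interval $(-1,1)$, rather than all of $\mathbb R$, appears naturally because it encodes the subluminal range of the auxiliary velocity.

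As a cross-check I would also derive the same representation from the gradient-based route of \cref{thm:Method1}. The boundary piece ${\mathcal S} = \{{\bf u}_*: D_*>0,\ E_* = \sqrt{D_*^2+m_*^2}\}$ carries $\nabla g({\bf u}_*) = (-D_*/E_*,\, -m_*/E_*,\, 1)^\top$, which is nonzero on ${\mathcal S}$ since its last component is $1$, so \cref{eq:bb1} holds. Parametrizing ${\mathcal S}$ by the physical velocity through $v_* = m_*/E_*\in(-1,1)$, so that $D_*/E_* = \sqrt{1-v_*^2}$, a short expansion of $\varphi({\bf u};{\bf u}_*) = ({\bf u}-{\bf u}_*)\cdot\nabla g({\bf u}_*)$ collapses, upon using $D_*^2+m_*^2 = E_*^2$, to exactly $E - m v_* - D\sqrt{1-v_*^2}$, reproducing \cref{Gs1:RHD}.

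I expect the only genuine subtlety, rather than a hard obstacle, to be the bookkeeping around the \emph{open} nature of both the region ($D>0$, $g>0$) and the parameter set $(-1,1)$: one must confirm that the minimizer never escapes to the endpoints, where $\sqrt{1-v_*^2}\to 0$ forces $\varphi\to E\mp m$, whereas the interior critical value $E - \sqrt{D^2+m^2}$ is strictly smaller than both $E-m$ and $E+m$ because $\sqrt{D^2+m^2}>|m|$ when $D>0$. This guarantees the infimum is attained in the interior, so that $\varphi>0$ for all $v_*\in(-1,1)$ is equivalent to $g>0$, matching the strict constraint $g>0$. Everything else is a routine single-variable calculus check, and no concavity of $g$ is required.
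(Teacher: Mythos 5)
Your proposal is correct and follows essentially the same route as the paper: both treat $D>0$ as the untouched linear constraint, reduce \cref{Gs1:RHD} to the identity $\min_{v_*\in(-1,1)}\varphi({\bf u};v_*)=g({\bf u})$ for $D>0$, and invoke \cref{thm:minphi}. The only difference is in how the minimum is verified---the paper gets $\varphi({\bf u};v_*)\ge g({\bf u})$ in one line from the Cauchy--Schwarz inequality applied to $(m,D)\cdot\bigl(v_*,\sqrt{1-v_*^2}\bigr)$, with equality at $v_*=m/\sqrt{D^2+m^2}$, whereas you reach the same minimizer by strict convexity and single-variable calculus (your endpoint check and gradient-based cross-check are sound but not needed in the paper's shorter argument).
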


\begin{proof}
	The first constraint in \cref{eq:RHD-G1} is linear. We deal with the second one by the constructive method. 
	The Cauchy–Schwarz inequality implies 
	\begin{equation*}
		\varphi ( {\bf u}; v_* ) \ge E- \sqrt{D^2 + m^2}  \sqrt{ v_*^2 + \left( \sqrt{1-v_*^2} \right)^2  } = E- \sqrt{D^2 + m^2} = g( {\bf u} ), 
	\end{equation*}
	where equality holds if $v_* = m/\sqrt{D^2 + m^2}$. This means $\min_{ v_*\in (-1,1) }
	\varphi ( {\bf u}; v_* ) = g( {\bf u} )$. According to \cref{thm:minphi}, we get the GQL representation \cref{Gs1:RHD}. 
\end{proof}

We now utilize the cross-product method to construct the GQL representation of the invariant region $\widetilde G$ in \cref{eq:RHD-G2}, where the minimum entropy principle is also included as a constraint. 

\begin{theorem}\label{thm:RHD2}
	For the 1D RHD system \cref{eq:1DRHD}, the GQL representation of the invariant region $\widetilde G$ in \cref{eq:RHD-G2} is given by 
	\begin{equation}\label{Gs2:RHD}
		\widetilde G_*= \Big  \{ {\bf u} = ( D, m, E  )^\top:~\rho>0,~~ \widetilde \varphi( {\bf u}; \rho_*, v_* ) \ge 0~~\forall \rho_* \in \mathbb R^+~~\forall v_* \in (-1,1) \Big \}
\end{equation}
with  $\widetilde \varphi( {\bf u}; \rho_*, v_* ) := {\bf u} \cdot {\bf n}_*  + S_{min} \rho_*^\Gamma$ and ${\bf n}_* := \left( -\sqrt{1-v_*^2} \big(1+ \frac{S_{min}\Gamma \rho_*^{\Gamma -1} }{\Gamma -1} \big),-v_*, 1   \right)^\top$.  
\end{theorem}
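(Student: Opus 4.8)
The plan is to follow the cross-product method of \cref{thm:Method2}, exactly as in the proof of the Euler entropy representation \cref{Gs2:EulerNS}, since the implicit nature of $\widetilde g$ rules out a direct gradient computation. The density-positivity constraint $D>0$ (which coincides with $\rho>0$ on physical states, as $D=\rho\gamma$) is already linear and is kept unchanged, so only the nonlinear entropy constraint $\widetilde g({\bf u})\ge 0$ must be linearized. I would first observe that $\widetilde g({\bf u})=0$ is equivalent to $p({\bf u})=S_{min}\,\rho({\bf u})^\Gamma$, and that (because $S_{min}>0$) this forces $p>0$, so every point of the entropy surface lies strictly inside the pressure/subluminal region $\{g>0\}$. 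Consequently the constraint $g>0$ is strictly satisfied throughout $\widetilde G$ and contributes no smooth boundary piece, so the only nonlinear smooth piece of $\partial\widetilde G$ is the hypersurface $\widetilde{\mathcal S}=\{{\bf u}_*:\widetilde g({\bf u}_*)=0\}$, and \cref{thm:Method2} applies with this single $\widetilde{\mathcal S}$.

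Next I would parametrize $\widetilde{\mathcal S}$ by the primitive variables $(\rho_*,v_*)\in\mathbb R^+\times(-1,1)$. Using the ideal equation of state together with $p_*=S_{min}\rho_*^\Gamma$ gives the specific internal energy $e_*=\frac{S_{min}\rho_*^{\Gamma-1}}{\Gamma-1}$ and the specific enthalpy $h_*=1+\frac{\Gamma S_{min}\rho_*^{\Gamma-1}}{\Gamma-1}$, whence
\[
{\bf u}_*={\bf U}(\rho_*,v_*)=\left(\rho_*\gamma_*,\ \rho_* h_*\gamma_*^2 v_*,\ \rho_* h_*\gamma_*^2-p_*\right)^\top,\qquad \gamma_*=(1-v_*^2)^{-\frac12},
\]
which is the natural physics-based analogue of the parametrization used for \cref{Gs2:EulerNS}.

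I would then compute the two tangent vectors $\partial {\bf u}_*/\partial\rho_*$ and $\partial {\bf u}_*/\partial v_*$, take their cross product, and scale the result by a suitable nonzero factor $\delta_*$ (fixing its sign so the vector points into $\widetilde G$) to recover the claimed inward normal ${\bf n}_*=\big(-\sqrt{1-v_*^2}\,(1+\frac{S_{min}\Gamma\rho_*^{\Gamma-1}}{\Gamma-1}),-v_*,1\big)^\top$; note its first component equals $-h_*\sqrt{1-v_*^2}$. With ${\bf n}_*$ in hand, a short computation using $\gamma_*^2(1-v_*^2)=1$ shows $-{\bf u}_*\cdot{\bf n}_*=S_{min}\rho_*^\Gamma$, so that $({\bf u}-{\bf u}_*)\cdot{\bf n}_*={\bf u}\cdot{\bf n}_*+S_{min}\rho_*^\Gamma=\widetilde\varphi({\bf u};\rho_*,v_*)$. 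Invoking \cref{thm:Method2} (or equivalently verifying the minimum condition of \cref{thm:minphi}) then yields the representation \cref{Gs2:RHD}.

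The main obstacle I anticipate is the cross-product algebra: because both $\gamma_*$ and $h_*$ depend on the parameters, the raw tangent vectors and their cross product are considerably messier than in the non-relativistic Euler case, and one must verify that after the scaling $\delta_*$ they collapse to the clean three-component form of ${\bf n}_*$. A secondary, more conceptual point requiring care is the justification that the strict constraint $g>0$ drops out of the final representation, i.e. that the entropy half-spaces together with $D>0$ already confine ${\bf u}$ to $\{g>0\}$; this is precisely the exactness guarantee of \cref{thm:Method2}/\cref{main:thm}, available once convexity of $\widetilde G$ and the inward orientation of ${\bf n}_*$ have been established.
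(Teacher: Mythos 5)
Your computational core coincides with the paper's own proof: the paper also uses the cross-product method with exactly your physics-based parametrization of the entropy surface, ${\bf u}_*=\big(\rho_*\gamma_*,\ \rho_*h_*\gamma_*^2v_*,\ \rho_*h_*\gamma_*^2-p_*\big)^\top$ with $p_*=S_{min}\rho_*^\Gamma$ and $h_*=1+\frac{\Gamma S_{min}\rho_*^{\Gamma-1}}{\Gamma-1}$, computes $\frac{\partial{\bf u}_*}{\partial\rho_*}\times\frac{\partial{\bf u}_*}{\partial v_*}=\frac{1}{\delta_*}{\bf n}_*$ with a nonzero scaling factor $\delta_*$, and uses the same identity $({\bf u}-{\bf u}_*)\cdot{\bf n}_*=\widetilde\varphi({\bf u};\rho_*,v_*)$ together with \cref{thm:Method2}. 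Your side computations ($-{\bf u}_*\cdot{\bf n}_*=S_{min}\rho_*^\Gamma$, first component of ${\bf n}_*$ equal to $-h_*\sqrt{1-v_*^2}$) are correct.

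The genuine gap is in how you discharge the strict constraint $g({\bf u})>0$ of \cref{eq:RHD-G2}. Your stated reason --- that on $\{\widetilde g=0\}$ one has $p=S_{min}\rho^\Gamma>0$, so the entropy surface lies strictly inside $\{g>0\}$ --- only shows that the hypersurface $\widetilde{\mathcal S}$ avoids $\{g=0\}$; it does not show that $\partial\widetilde G\cap\partial\{g>0\}$ is empty, i.e.\ that $\{g=0\}$ contributes no boundary piece of its own where the entropy constraint is inactive. Appealing at that point to the ``exactness guarantee'' of \cref{thm:Method2}/\cref{main:thm} is circular: applied to $\widetilde G$ with its three defining constraints, those theorems produce a representation that formally still carries a family of half-spaces attached to ${\mathcal S}_2=\partial\widetilde G\cap\partial\{g>0\}$, and they collapse to \cref{Gs2:RHD} only after one verifies either ${\mathcal S}_2=\emptyset$ or that its constraints are implied --- which is precisely the point at issue. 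The paper closes this by a one-line algebraic verification instead of topology: substituting the specific parameters $\rho_*=D^2/\sqrt{D^2+m^2}$ and $v_*=m/\sqrt{D^2+m^2}$ gives $\widetilde\varphi({\bf u};\rho_*,v_*)=g({\bf u})-\frac{S_{min}}{\Gamma-1}\big(D^2/\sqrt{D^2+m^2}\big)^\Gamma$, so $D>0$, $S_{min}>0$, and the linear family $\widetilde\varphi\ge0$ already force $g({\bf u})>0$, allowing the second constraint of \cref{eq:RHD-G2} to be absorbed into \cref{key64234}. Alternatively, your topological route can be completed by a limiting argument: along any sequence in $\widetilde G$ with $g\to0$ one has $p\to0$, the entropy bound forces $\rho\le(p/S_{min})^{1/\Gamma}\to0$, hence $\gamma=D/\rho\to\infty$ and $E\ge D\gamma-p\to\infty$ unless $D\to0$, so finite boundary points with $g=0$ occur only on $\{D=0\}$; but as written, this verification is missing, and the paper's substitution is the cleaner fix.
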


\begin{proof}
	We only need to tackle the second and third constraints in \cref{eq:RHD-G2}. 
  For the third constraint $\widetilde g({\bf u})\ge 0$, the corresponding boundary hypersurface is 
	$
	\widetilde {\mathcal S} := \{ {\bf u}_*=(\rho_*,m_*,E_*): \rho_* >0,~g({\bf u}_*)>0,~\widetilde g({\bf u}_*)=0 \}
	$. Based on the equivalence of $\widetilde g({\bf u})=0$ and $p = S_{min}  \rho^{\Gamma}$, we obtain a natural physics-based parametrization of $\widetilde {\mathcal S}$, namely, 
	{\small\begin{align*}
		\widetilde {\mathcal S} = \left\{  
		{\bf u}_* = \left( \frac{\rho_*}{\sqrt{1-v_*^2}},~ \frac{ \Big(\rho_*+\frac{S_{min}\Gamma \rho_*^\Gamma }{\Gamma -1} \Big) v_*}{1-v_*^2},~
	  \frac{ \rho_*+\frac{S_{min}\Gamma \rho_*^\Gamma }{\Gamma -1}  }{1-v_*^2} - S_{min} \rho_*^\Gamma   \right)^\top:~ \rho_* >0,~v_* \in (-1,1) 
		\right\}. 
	\end{align*}}
	We can then derive the normal vector ${\bf n}_*$ of ${\mathcal S}$ at ${\bf u}_*$ by cross product: 
	\begin{align*}
		\frac{ \partial {\bf u}_* }{\partial \rho_* } \times  \frac{ \partial {\bf u}_* }{\partial v_* } &  = \frac{1}{\delta_*} {\bf n}_*, \quad \mbox{with}~~ \delta_* := (1-v_*)^{5/2} \left(   \rho_* + \frac{ S_{min} \Gamma }{\Gamma -1} \rho_* ^\Gamma 
		( 1+v_*^2 - \Gamma v_*^2 )   \right)^{-1}. 
	\end{align*} 
	By \cref{thm:Method2} and $({\bf u}-{\bf u}_*) \cdot {\bf n}_* = \widetilde \varphi( {\bf u}; \rho_*, v_* ) $, 
	the  GQL representation for 
	$\widetilde g({\bf u})\ge 0$ is 
	\begin{equation}\label{key64234}
		\widetilde \varphi( {\bf u}; \rho_*, v_* ) \ge 0\qquad \forall \rho_* \in \mathbb R^+,\quad \forall v_* \in \mathbb R.
	\end{equation}
	Note that $S_{min}>0$ and 	
\begin{equation*}
		g({\bf u}) > g({\bf u}) -  \frac{ S_{min}}{ \Gamma -1 } 
	\left(  \frac{D^2}{ \sqrt{ D^2 + m^2 } }   \right)^\Gamma =  \widetilde \varphi \left( {\bf u};  \frac{ D^2 }{ \sqrt{D^2+m^2} }, \frac{m}{ \sqrt{D^2+m^2} } \right),
\end{equation*}
	which means that \cref{key64234} also implies $g({\bf u})>0$ in \cref{eq:RHD-G2}. That is, the second and third constraints in \cref{eq:RHD-G2} can be equivalently
	represented by \cref{key64234}. Therefore, we obtain the GQL representation \cref{Gs2:RHD}.
\end{proof}

\subsubsection*{Example 4: Ten-moment Gaussian closure system}
\begin{theorem}\label{thm:10M}
	For the 2D ten-moment Gaussian closure system \cref{eq:Ten-Moment}, the GQL representation of the invariant region $G$ in \cref{eq:10M-G11} is given by 
	\begin{equation}\label{Gs:10M}
		G_*= \Big  \{ {\bf u} \in \mathbb R^6 : \rho>0,~\varphi( {\bf u}; {\bm z}, {\bm v}_* )>0~~\forall  {\bm v}_* \in \mathbb R^2~~ \forall {\bm z} \in \mathbb R^2 \setminus \{ {\bf 0} \} \Big \},
	\end{equation}
	where ${\bf u} := ( \rho, {\bm m}, E_{11}, E_{12}, E_{22} )^\top$, and the function 
	$\varphi( {\bf u}; {\bm z}, {\bm v}_* )$ is linear with respect to $\bf u$: 	
\begin{equation}\label{eq:Phi-10M}
		\varphi( {\bf u}; {\bm z}, {\bm v}_* ) := {\bm z}^\top \left( 
	{\bf E} - {\bm m} \otimes {\bm v}_* +  \rho  \frac{{\bm v}_* \otimes {\bm v}_*}2 
	\right) {\bm z}.
\end{equation}
\end{theorem}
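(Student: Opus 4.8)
The plan is to use the constructive method, i.e.\ \cref{thm:minphi}, exactly as in the Euler/Navier--Stokes example. The linear constraint $\rho>0$ is already in the desired form and is simply carried over unchanged. The substantive work concerns the positive-definiteness constraint in \cref{eq:10M-G11}, which I would recast as the family of scalar constraints
\[
g_{\bm z}({\bf u}) := {\bm z}^\top \Big( {\bf E} - \frac{{\bm m}\otimes{\bm m}}{2\rho} \Big) {\bm z} > 0, \qquad {\bm z} \in \mathbb R^2 \setminus \{{\bf 0}\},
\]
one for each direction ${\bm z}$, and attach to each the shared free auxiliary vector ${\bm v}_* \in \mathbb R^2$ through the candidate linear function $\varphi({\bf u};{\bm z},{\bm v}_*)$ in \cref{eq:Phi-10M}. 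Affinity of $\varphi$ in ${\bf u}=(\rho,{\bm m},E_{11},E_{12},E_{22})^\top$ is immediate once ${\bm z},{\bm v}_*$ are frozen, since each of ${\bm z}^\top{\bf E}{\bm z}$, ${\bm z}^\top({\bm m}\otimes{\bm v}_*){\bm z}$, and ${\bm z}^\top({\bm v}_*\otimes{\bm v}_*){\bm z}$ depends affinely on the components of ${\bf u}$.

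The key step is the minimization of $\varphi$ over ${\bm v}_*$ for a fixed direction ${\bm z}\neq{\bf 0}$. Using the identity ${\bm z}^\top({\bm a}\otimes{\bm b}){\bm z}=({\bm z}\cdot{\bm a})({\bm z}\cdot{\bm b})$, I would rewrite
\[
\varphi({\bf u};{\bm z},{\bm v}_*) = {\bm z}^\top{\bf E}{\bm z} - ({\bm z}\cdot{\bm m})\,s + \frac{\rho}{2}\,s^2, \qquad s := {\bm z}\cdot{\bm v}_*.
\]
Because ${\bm z}\neq{\bf 0}$, the scalar $s$ sweeps all of $\mathbb R$ as ${\bm v}_*$ ranges over $\mathbb R^2$, so minimizing $\varphi$ over ${\bm v}_*$ reduces to minimizing a convex (since $\rho>0$) quadratic in the single variable $s$. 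Completing the square yields the minimizer $s=({\bm z}\cdot{\bm m})/\rho$ and the minimum value
\[
\min_{{\bm v}_*\in\mathbb R^2}\varphi({\bf u};{\bm z},{\bm v}_*) = {\bm z}^\top{\bf E}{\bm z} - \frac{({\bm z}\cdot{\bm m})^2}{2\rho} = g_{\bm z}({\bf u}),
\]
valid for every ${\bf u}$ with $\rho>0$, i.e.\ on $G_L=\{\rho>0\}$, where I have used $({\bm z}\cdot{\bm m})^2={\bm z}^\top({\bm m}\otimes{\bm m}){\bm z}$.

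With this identity in hand the conclusion follows from \cref{thm:minphi} with $\lambda_{\bm z}\equiv 1>0$: the requirement $\varphi({\bf u};{\bm z},{\bm v}_*)>0$ for all ${\bm v}_*$ is equivalent to $g_{\bm z}({\bf u})>0$, and imposing this for all ${\bm z}\in\mathbb R^2\setminus\{{\bf 0}\}$ recovers exactly the positive-definiteness constraint, so $G_*=G$. I do not anticipate a genuine obstacle, as the computation is routine; the only points demanding care are bookkeeping ones, namely that freezing ${\bm z},{\bm v}_*$ leaves $\varphi$ affine in ${\bf u}$, and that the reduction to the scalar $s$ relies on ${\bm z}\neq{\bf 0}$ (so the full real line is swept) and on $\rho>0$ (so the quadratic is bounded below with an attained minimum). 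A minor conceptual wrinkle is that \cref{thm:minphi} is phrased for a finite index set, whereas here the constraints are indexed by the continuum ${\bm z}\in\mathbb R^2\setminus\{{\bf 0}\}$; the per-direction argument is identical, however, so the extension is immediate.
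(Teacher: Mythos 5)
Your proposal is correct and takes essentially the same route as the paper: the constructive method via \cref{thm:minphi}, where your scalar reduction $s={\bm z}\cdot{\bm v}_*$ followed by completing the square is just a reparametrization of the paper's identity $\varphi( {\bf u}; {\bm z}, {\bm v}_* ) = {\bm z}^\top \big( {\bf E} - \frac{ {\bm m} \otimes {\bm m} }{2\rho} \big) {\bm z} +  \frac{\rho}{2} \big| {\bm z} \cdot \big( {\bm v}_* - \frac{\bm m}{\rho} \big) \big|^2$, yielding the same conclusion $\min_{{\bm v}_*\in\mathbb R^2}\varphi = g_{\bm z}({\bf u})$ for each fixed ${\bm z}\neq{\bf 0}$. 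Your closing remark about the continuum of directions ${\bm z}$ applies equally to the paper's own invocation of \cref{thm:minphi} and is resolved exactly as you indicate, by the per-direction equivalence.
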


\begin{proof}
	We only need to deal with the nonlinear constraint in \cref{eq:10M-G11}.  Note that 
	\begin{align*}
		\varphi( {\bf u}; {\bm z}, {\bm v}_* ) = {\bm z}^\top \left( {\bf E} - \frac{ {\bm m} \otimes {\bm m} }{2\rho} \right) {\bm z} +  \frac{\rho}{2} \left| {\bm z} \cdot \left( 
		{\bm v}_* - \frac{\bm m}{\rho} \right) \right|^2,  
	\end{align*}
	which implies $\min_{ {\bm v}_* \in \mathbb R^2 }  \varphi( {\bf u}; {\bm z}, {\bm v}_* ) = {\bm z}^\top \big( {\bf E} - \frac{ {\bm m} \otimes {\bm m} }{2\rho} \big) {\bm z}$. By \cref{thm:minphi}, we immediately obtain the GQL representation \cref{Gs:10M}. 
\end{proof}

\subsubsection*{Example 5: Ideal MHD system}

\begin{theorem}\label{thm:IMHD-GQL}
	For the ideal MHD system \cref{eq:idealMHD}, the GQL representation of the invariant region $G$ in \cref{eq:G-iMHD} is given by 
\begin{equation}\label{eq:Gs-iMHD}
	G_* = \Big\{ 
	{\bf u}=(\rho, {\bm m}, {\bf B}, E)^\top \in \mathbb R^8:~ \rho>0,~\varphi({\bf u}; {\bm v}_*, {\bf B}_*) 
	 > 0~~
	\forall {\bm v}_*, {\bf B}_* \in \mathbb R^3
	\Big\}
\end{equation}
	with $\varphi({\bf u}; {\bm v}_*, {\bf B}_*) := {\bf u} \cdot {\bf n}_*  + \frac{ \|{\bf B}_*\|^2 } 2$ and $
	{\bf n}_* := \big( 
	\frac{\|{\bm v}_*\|^2}{2}, -{\bm v}_*, -{\bf B}_*, 1  
	\big)^\top.
	$
\end{theorem}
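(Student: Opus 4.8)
The plan is to apply the constructive method encapsulated in \cref{thm:minphi}, exactly as was done for the Euler system in \cref{thm:EulerNS-G1}; the structure here is identical except for the additional magnetic-field block. Since the first constraint $\rho>0$ is already linear, it remains unchanged in the GQL representation, and I would concentrate solely on the single nonlinear constraint $g({\bf u})>0$ in \cref{eq:G-iMHD}.

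First I would write $\varphi$ out explicitly. With ${\bf u}=(\rho,{\bm m},{\bf B},E)^\top$ and ${\bf n}_*=\big(\frac{\|{\bm v}_*\|^2}{2},-{\bm v}_*,-{\bf B}_*,1\big)^\top$, the definition gives
\begin{equation*}
\varphi({\bf u};{\bm v}_*,{\bf B}_*) = E - {\bm m}\cdot{\bm v}_* + \rho\frac{\|{\bm v}_*\|^2}{2} - {\bf B}\cdot{\bf B}_* + \frac{\|{\bf B}_*\|^2}{2}.
\end{equation*}
The key observation is that $\varphi$ \emph{separates} into a ${\bm v}_*$-block and a ${\bf B}_*$-block, each being a quadratic form in its own auxiliary variable.

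Next I would complete the square in each block. Using $\rho>0$, the velocity terms equal $\frac{\rho}{2}\|{\bm v}_*-{\bm m}/\rho\|^2-\frac{\|{\bm m}\|^2}{2\rho}$, while the magnetic terms equal $\frac12\|{\bf B}_*-{\bf B}\|^2-\frac{\|{\bf B}\|^2}{2}$. Adding these, together with the $E$ term, yields
\begin{equation*}
\varphi({\bf u};{\bm v}_*,{\bf B}_*) = \frac{\rho}{2}\Big\|{\bm v}_*-\frac{{\bm m}}{\rho}\Big\|^2 + \frac12\|{\bf B}_*-{\bf B}\|^2 + g({\bf u}).
\end{equation*}
Both quadratic terms are nonnegative whenever $\rho>0$ and vanish at the unique minimizer $({\bm v}_*,{\bf B}_*)=({\bm m}/\rho,{\bf B})$. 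Hence $\min_{{\bm v}_*,{\bf B}_*\in\mathbb R^3}\varphi({\bf u};{\bm v}_*,{\bf B}_*)=g({\bf u})$ for every ${\bf u}$ with $\rho>0$, which is precisely \cref{key656} with $\lambda({\bf u})\equiv 1>0$ on $G_L=\{\rho>0\}$. Invoking \cref{thm:minphi} then immediately produces the GQL representation \cref{eq:Gs-iMHD}.

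I expect no genuine obstacle here: the whole argument collapses to recognizing the separable quadratic structure of $\varphi$ and completing the square block by block. The only point worth stating carefully is that the positivity of $\rho$ makes the velocity quadratic form positive-definite, which is what guarantees that the infimum over the unbounded set $\mathbb R^3\times\mathbb R^3$ is actually attained and equals $g({\bf u})$ rather than merely bounding it from below. As an alternative, one could instead parametrize the boundary hypersurface $\{g({\bf u})=0,\ \rho>0\}$ by the primitive variables and apply the cross-product method of \cref{thm:Method2}, but the constructive route above is by far the shortest.
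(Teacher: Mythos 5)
Your proposal is correct, but it takes a genuinely different route from the paper. The paper proves \cref{thm:IMHD-GQL} with the \emph{gradient-based method}: it computes $\nabla g ({\bf u}) = \big( \frac{ \| {\bm m} \|^2 }{ 2 \rho^2 }, -\frac{\bm m}{\rho}, -{\bf B}, 1 \big)^\top$, uses the physical equivalence of $g({\bf u})=0$ and $p=0$ to write the boundary hypersurface ${\mathcal S}$ in the parametric form ${\bf u}_*= \big( \rho_*,\rho_* {\bm v}_*, {\bf B}_*, \frac12 ( \rho_* \|{\bm v}_*\|^2 + \| {\bf B}_*  \|^2 ) \big)^\top$, verifies $({\bf u}-{\bf u}_*) \cdot \nabla g({\bf u}_*) = \varphi({\bf u}; {\bm v}_*, {\bf B}_*)$, and invokes \cref{thm:Method1} — which in turn rests on the geometric machinery of \cref{main:thm} and its hypotheses (piecewise $C^1$ boundary, nonvanishing gradient \cref{eq:bb1}, etc.). Your argument instead uses the \emph{constructive method} of \cref{thm:minphi}: completing the square in the two separable blocks to get $\varphi = \frac{\rho}{2}\|{\bm v}_*-{\bm m}/\rho\|^2 + \frac12\|{\bf B}_*-{\bf B}\|^2 + g({\bf u})$, hence $\min_{{\bm v}_*,{\bf B}_*}\varphi = g({\bf u})$ on $G_L=\{\rho>0\}$, which is exactly \cref{key656} with $\lambda\equiv 1$. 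Your algebra is right, and you correctly flag the one delicate point — that $\rho>0$ makes the minimum over the unbounded parameter space attained at $({\bm m}/\rho,{\bf B})$, so the constraint family is genuinely \emph{equivalent} to $g>0$ rather than merely implied by it. What each approach buys: yours is shorter, entirely elementary, and needs none of the geometric hypotheses of \cref{main:thm}; the paper's choice illustrates how the physics-based parametrization (with the gradient independent of $\rho_*$) automatically eliminates one auxiliary variable and serves as a worked example of the geometric framework that generalizes to cases, like the relativistic MHD region, where $g$ is implicit and completing the square is unavailable. Note also that the paper itself points out (after \cref{thm:minphi}) that the constructive approach was how the ideal-MHD GQL representation was originally obtained in the cited earlier work \cite{Wu2017a}, so your route reproduces that original derivation rather than the proof given here.
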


\begin{proof}
	We use to the gradient-based method. For the nonlinear constraint in \cref{eq:G-iMHD}, the gradient of $g({\bf u})$ is 
	$
		\nabla g ({\bf u}) = \big( \frac{ \| {\bm m} \|^2 }{ 2 \rho^2 }, -\frac{\bm m}{\rho}, -{\bf B}, 1    \big)^\top, 
	$
	and the corresponding boundary hypersurface is 
	$
	{\mathcal S} := \{ {\bf u}_*=(\rho_*,{\bm m}_*,{\bf B}_*,E_*)^\top: \rho_* >0, g({\bf u}_*)=0 \}
	$. 
	Based on the equivalence of $g({\bf u})=0$ and $p=0$, we obtain a natural physics-based  parametrization of $ {\mathcal S}$, namely, 
	\begin{equation*}
		{\mathcal S}= \left \{ {\bf u}_*= \left( \rho_*,\rho_* {\bm v}_*, {\bf B}_*, \frac{ 1  }{2} \big( \rho_* \|{\bm v}_*\|^2 + \| {\bf B}_*  \|^2 \big)  \right)^\top: \rho_* > 0,~{\bm v}_* \in \mathbb R^3,~{\bf B}_* \in \mathbb R^3
		\right\}.
	\end{equation*}
	For ${\bf u}_* \in {\mathcal S}$ and ${\bf u}=(\rho, {\bm m}, {\bf B}, E)^\top$, we have 
	$
		({\bf u}-{\bf u}_*) \cdot \nabla g({\bf u}_*) 
		= \varphi({\bf u}; {\bm v}_*, {\bf B}_*). 
	$
	By \cref{thm:Method1}, we obtain the GQL representation \cref{eq:Gs-iMHD}. 
\end{proof}

\subsubsection*{Example 6: Relativistic MHD system}

\begin{theorem}\label{thm:RMHD-GQL}
	For the relativistic MHD system \cref{eq:RMHD}, the GQL representation of the invariant region $G$ in \cref{eq:RMHD-G1} is given by 
	\begin{equation}\label{eq:Gs-RMHD}
		G_* = \Big\{ 
		{\bf u} \in \mathbb R^8:~ D >0,~\varphi({\bf u}; {\bm v}_*, {\bf B}_*) 
		> 0~~
		\forall {\bf B}_* \in \mathbb R^3~~ \forall {\bm v}_* \in \mathbb B_1({\bf 0})
		\Big\},
	\end{equation}
	where ${\bf u}=(D, {\bm m}, {\bf B}, E)^\top$, $\mathbb B_1({\bf 0}) :=\{ {\bm x} \in \mathbb R^3: \| {\bm x} \|\le 1 \}$ is a unit 3D ball, and the linear function $\varphi({\bf u}; {\bm v}_*, {\bf B}_*) := {\bf u} \cdot {\bf n}_*  + p_m^*$ with 
	\begin{align}
			& p_{m}^*  := \frac12 \left( 
			{ (1-{\| {\bm v}_* \|}^2) \|{\bf B}_*\|^2 +( {\bm v}_* \cdot {\bf B}_*)^2 } \right), \label{eq:RMHD:vecns2}
		\\
		\label{eq:RMHD:vecns}
	&{\bf n}_* := \left( - \sqrt {1 - {\| {\bm v}_* \|}^2},~
		- {\bm v}_*,~ - (1 - {\| {\bm v}_* \|}^2) {\bf B}_* - ( {\bm v}_* \cdot {\bf B}_*) {\bm v}_*,~1 \right)^{\top}.
\end{align}	 
Note that $p_{m}^*$ and ${\bf n}_*$ only depend on the free auxiliary variables $({\bm v}_*,{\bf B}_*)$. 
\end{theorem}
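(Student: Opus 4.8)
The overriding difficulty is that $p({\bf u})$ and ${\bm v}({\bf u})$ in \cref{eq:RMHD-G1} are only implicitly defined, so neither $\nabla g_i$ nor a parametrization in the conservative variables is directly available. My plan is therefore to carry out all computations in the primitive variables $(\rho,{\bm v},p,{\bf B})$, in which the constraints $p>0$ and $\|{\bm v}\|<1$ become explicit, using the fact that the primitive-to-conservative map in \cref{eq:RMHD} is a bijection onto $G$ (cf.~\cite{WuShu2020NumMath}). The key reduction is to quantify $\varphi$ in \cref{eq:Gs-RMHD} first over ${\bf B}_*$ and then over ${\bm v}_*$: since the minimum over ${\bf B}_*$ is attained, one gets $G_* = \{ {\bf u}:~D>0,~\min_{{\bm v}_*\in\mathbb B_1({\bf 0})}\psi({\bm v}_*)>0 \}$ for a scalar function $\psi$ defined below, so the whole theorem collapses to the single scalar equivalence $\min_{\|{\bm v}_*\|\le1}\psi({\bm v}_*)>0 \iff {\bf u}\in G$, which establishes both inclusions simultaneously.

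To obtain $\psi$, I would observe that ${\bf B}_*\mapsto\varphi({\bf u};{\bm v}_*,{\bf B}_*)$ is a convex quadratic whose Hessian is $M:=(1-\|{\bm v}_*\|^2){\bf I}+{\bm v}_*\otimes{\bm v}_*$ and whose linear part equals $-(M{\bf B})\cdot{\bf B}_*$, as read off from \cref{eq:RMHD:vecns2}--\cref{eq:RMHD:vecns}. Completing the square shows the minimizer is ${\bf B}_*={\bf B}$, uniformly for $\|{\bm v}_*\|\le 1$ (including the degenerate rank-one case $\|{\bm v}_*\|=1$), giving $\psi({\bm v}_*) = E - D\sqrt{1-\|{\bm v}_*\|^2} - {\bm m}\cdot{\bm v}_* - \frac12\big[(1-\|{\bm v}_*\|^2)\|{\bf B}\|^2+({\bm v}_*\cdot{\bf B})^2\big]$. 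As a geometric cross-check that \cref{eq:RMHD:vecns2}--\cref{eq:RMHD:vecns} are correct and correctly oriented, I would verify them by the cross-product method of \cref{thm:Method2}: parametrizing the smooth piece ${\mathcal S}_p=\{p=0\}$ of $\partial G$ by $(\rho_*,{\bm v}_*,{\bf B}_*)$ (so that $h=1$), the normal built from the tangent vectors is $\rho_*$-independent and equals ${\bf n}_*$, the identity ${\bf u}_*\cdot{\bf n}_*=-p_m^*$ yields $\varphi({\bf u}_*;{\bm v}_*,{\bf B}_*)=0$ (the hyperplane supports $G$ at ${\bf u}_*$), and the last entry $+1$ of ${\bf n}_*$ (the coefficient of $E$) fixes the inward orientation.

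It then remains to prove the scalar equivalence for $\psi$. I would first check that $\psi$ is strictly convex on the open ball, since its Hessian is the sum of the positive-definite Hessian of $-D\sqrt{1-\|{\bm v}_*\|^2}$ and the positive-semidefinite $\|{\bf B}\|^2{\bf I}-{\bf B}\otimes{\bf B}$; hence the minimum over the compact ball $\mathbb B_1({\bf 0})$ exists and is governed by a single stationarity condition. Substituting the relations of \cref{eq:RMHD} and simplifying with $\gamma^2(1-\|{\bm v}\|^2)=1$ and $\rho h=\rho+\rho e+p$, the interior analysis should reduce positivity of $\psi$ on $\{\|{\bm v}_*\|<1\}$ to $p>0$, while on the bounding sphere $\psi({\bm v}_*)=E-{\bm m}\cdot{\bm v}_*-\frac12({\bm v}_*\cdot{\bf B})^2$, whose positivity for all unit ${\bm v}_*$ should encode exactly the subluminal constraint $\|{\bm v}\|<1$. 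This is precisely why ${\bm v}_*$ ranges over the \emph{closed} ball in \cref{eq:Gs-RMHD}: the interior carries the pressure constraint and the sphere carries the speed-of-light constraint. For the converse, if $p\le0$ or $\|{\bm v}\|\ge1$ I would exhibit an explicit offending ${\bm v}_*$ (an interior stationary point, respectively a suitably chosen unit vector) at which $\psi\le0$.

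The main obstacle I anticipate is the behavior near the light cone $\|{\bm v}_*\|=1$, where several quantities in both $\psi$ and the primitive recovery ($\gamma\to\infty$, $D\sqrt{1-\|{\bm v}_*\|^2}\to0$, $M$ becoming rank one) degenerate simultaneously, so the careful bookkeeping showing that the interior infimum and the spherical infimum together capture both constraints, with neither overlap nor gap, is the delicate step. A practical shortcut, in the spirit of the hybrid strategy of \cref{sec:method}, is to bypass checking all the hypotheses of \cref{main:thm} and instead verify the scalar identity for $\min_{\mathbb B_1({\bf 0})}\psi$ directly, i.e.~to apply \cref{thm:minphi} to the reduced family $\psi$; this is the route I would ultimately follow to keep the algebra of \cref{eq:RMHD} manageable.
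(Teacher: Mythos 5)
Your algebraic preprocessing is sound: the Hessian of $\varphi$ in ${\bf B}_*$ is indeed $M=(1-\|{\bm v}_*\|^2){\bf I}+{\bm v}_*\otimes{\bm v}_*$ with minimizer ${\bf B}_*={\bf B}$, the resulting $\psi$ and its strict convexity on the open ball are correct, and the tangency identity ${\bf u}_*\cdot{\bf n}_*=-p_m^*$ on $\{p=0\}$ checks out. But note that what you relegate to a ``geometric cross-check'' is exactly the paper's proof: the paper parametrizes ${\mathcal S}=\{{\bf u}_*: D_*>0,\, g_2({\bf u}_*)>0,\, p({\bf u}_*)=0\}$ by $(\rho_*,{\bm v}_*,{\bf B}_*)$, verifies that all seven tangent vectors are perpendicular to ${\bf n}_*$, invokes \cref{thm:Method2} (backed by \cref{main:thm}, with the convexity of $G$ and the equivalent form $G=\{D>0,\,g_2>0,\,p>0\}$, $g_2({\bf u})=E-\sqrt{D^2+\|{\bm m}\|^2}$, imported from \cite{WuTangM3AS}), and then absorbs the constraint $g_2>0$ by the explicit choice ${\bm v}_*={\bm m}/\sqrt{D^2+\|{\bm m}\|^2}$, ${\bf B}_*={\bf 0}$, for which $\varphi({\bf u};{\bm v}_*,{\bf B}_*)=g_2({\bf u})$. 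This route never evaluates the implicit functions $p({\bf u})$, ${\bm v}({\bf u})$ anywhere except on $\partial G$, which is the whole point.

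Your actual proof, by contrast, rests on the unproven scalar equivalence $\min_{{\bm v}_*\in\mathbb B_1({\bf 0})}\psi({\bm v}_*)>0 \iff {\bf u}\in G$, which is precisely the long case-by-case verification of \cite{WuTangM3AS} that the GQL framework was built to avoid --- and the heuristic you propose to organize it (``interior carries $p>0$, sphere carries $\|{\bm v}\|<1$'') is false. Take ${\bf B}={\bf 0}$: then $\psi({\bm v}_*)=E-D\sqrt{1-\|{\bm v}_*\|^2}-{\bm m}\cdot{\bm v}_*$ attains its minimum at the \emph{interior} point ${\bm v}_*={\bm m}/\sqrt{D^2+\|{\bm m}\|^2}$ with value $E-\sqrt{D^2+\|{\bm m}\|^2}$, which encodes $p>0$ and $\|{\bm v}\|<1$ \emph{jointly} (cf.\ \cref{eq:RHD-G1}), while the sphere yields only the strictly weaker $E\ge\|{\bm m}\|$; relatedly, ${\bm v}_*={\bm v}$ is not a stationary point of $\psi$ (its gradient there is $\rho\gamma^2(1-h){\bm v}\neq{\bf 0}$ for $h>1$), so the interior minimum is not $\rho e$ and does not reduce to $p>0$ by local substitution. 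Your converse step (``exhibit an offending ${\bm v}_*$ when $p\le 0$ or $\|{\bm v}\|\ge 1$'') also presumes that $p({\bf u})$ and ${\bm v}({\bf u})$ are defined for ${\bf u}\notin G$, where the positive root of $F(\phi;{\bf u})$ need not exist or be unique --- a difficulty the KKT argument in step (ii) of the proof of \cref{main:thm} is specifically designed to bypass. Finally, the proposed shortcut of applying \cref{thm:minphi} to the reduced family $\psi$ cannot be executed: \cref{key656} requires a closed-form factorization $\min\varphi=\lambda({\bf u})g({\bf u})$, but $\min_{\mathbb B_1({\bf 0})}\psi$ has no closed form and the target $g=p({\bf u})$ is implicit. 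The executable proof is the geometric one you treated as optional, supplemented by the $g_2$-absorption step you omitted.
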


\begin{proof}
As shown in \cite{WuTangM3AS}, 
the region $G$ in \cref{eq:RMHD-G1} can be equivalently represented as 
	\begin{equation}\label{eq:G2-RMHD}
	G = \left\{ 
	{\bf u}\in \mathbb R^8:~ D >0,~g_2({\bf u})>0,~p({\bf u})>0
	\right\}
\end{equation}
with $g_2({\bf u}) :=E-\sqrt{ D^2 + \| {\bm m} \|^2 }$. Although the implicit function $p({\bf u})$ defined in \cref{getprimformU} can not be explicitly formulated, 
the corresponding 
boundary hypersurface ${\mathcal S} := 
\{ {\bf u}_*=(D_*, {\bm m}_*, {\bf B}_*, E_*)^\top: D_*>0, g_2({\bf u}_*)>0, p({\bf u}_*)=0 \}
$ has an explicit physics-based parameterization:
\begin{align*}
	{\mathcal S} = \Big\{ {\bf u}_* & = \Big(
	\rho_* \gamma_*,~ \rho_* \gamma_*^2 {\bm v}_* + \| {\bf B}_* \|^2 {\bm v}_* - ( {\bm v}_* \cdot {\bf B}_* ) {\bf B}_*,~{\bf B}_*,
	\\ 
& \qquad
	  \rho_* \gamma_*^2 + \| {\bf B}_* \|^2 - p_m^* 
	   \Big)^\top: \rho_*>0,~{\bf B}_* \in \mathbb R^3,~{\bm v}_* \in \mathbb B_1({\bf 0})
	\Big \}
\end{align*}
with $p_m^*$ defined in \cref{eq:RMHD:vecns2} and $\gamma_* := ( 1-\| {\bm v_*} \|^2 )^{\frac12}$. 
This parameterization 
is helpful for dealing with the highly nonlinear constraint $p({\bf u})>0$ by 
 the cross-product method. 
 For $1\le i \le 3$, denote ${\bf e}_i :=(\delta_{1i},\delta_{2i},\delta_{3i})$ with $\delta_{ij}$ being the Kronecker delta. 
 Taking the partial derivatives of 
 ${\bf u}_*$ with respect to the parametric variables 
 $\{ \rho_*, {\bm v}_*, {\bf B}_* \}$ gives 
{\small \begin{align*}
	\frac{ \partial {\bf u}_* }{ \partial \rho_* } 
	&= \left( \gamma_*,~\gamma_*^2 {\bm v}_*,~0,~0,0,~\gamma_*^2 \right)^\top,
	\\
	\frac{ \partial {\bf u}_* }{ \partial  v_{i*} } 
	&= \Big( \rho_* \gamma_*^3 v_{i*} , (\rho_*\gamma_*^2 + \| {\bf B}_* \|^2) {\bf e}_i + 2 \rho_* \gamma_*^4 v_{i*} {\bm v}_* - B_{i*} {\bf B}_*,
	0,0,0, 2 \rho_* \gamma_*^4 v_{i*} 
	- B_{i*} ( {\bm v}_* \cdot {\bf B}_* ) + \| {\bf B}_* \|^2 v_{i*}
	 \Big)^\top,
	 \\
	 \frac{ \partial {\bf u}_* }{ \partial B_{i*} } 
	 &= \Big( 0,~-({\bm v}_*\cdot {\bf B}_*) {\bf e}_i + 2 B_{i*} {\bm v}_* - v_i {\bf B}_*,~{\bf e}_i,
	 B_{i*}(1+ \| {\bm v}_* \|^2 ) - v_{i*} ({\bm v}_*\cdot {\bf B}_*) \Big),~~1\le i \le 3, 	
\end{align*}}
which are all perpendicular to the nonzero vector ${\bf n}_*$ defined in \cref{eq:RMHD:vecns}. 
This means ${\bf n}_*$ is parallel to the cross product 
$\frac{ \partial {\bf u}_* }{ \partial \rho_* } \times \big(
\bigwedge\limits_{i = 1}^{3} \frac{ \partial {\bf u}_* }{ \partial  v_{i*} } \big) \times \big( \bigwedge\limits_{i = 1}^{3} \frac{ \partial {\bf u}_* }{ \partial  B_{i*} } \big) $, implying that 
${\bf n}_*$ is a normal vector of ${\mathcal S}$ at ${\bf u}_*$. 
It can be verified that ${\bf n}_*$ is always directed towards the concave side of 
$\mathcal S$. 
By \cref{thm:Method2} and $({\bf u}-{\bf u}_*) \cdot {\bf n}_* = \varphi  ({\bf u}; {\bm v}_*, {\bf B}_*) $, we know that 
the  GQL representation for 
$p({\bf u})>0$ is 
\begin{equation}\label{key6467}
	\varphi  ({\bf u}; {\bm v}_*, {\bf B}_*) > 0\quad 
	 \forall {\bf B}_* \in \mathbb R^3 \quad  \forall {\bm v}_* \in \mathbb B_1({\bf 0}).
\end{equation}
If taking ${\bm v}_* = {\bm m}/\sqrt{D^2+\|{\bm m} \|^2}$ and ${\bf B}_* = {\bf 0}$, we obtain $\varphi  ({\bf u}; {\bm v}_*, {\bf B}_*)=g_2({\bf u})$, 
which means that \cref{key6467} also implies $g_2({\bf u})>0$ in \cref{eq:G2-RMHD}.  In other words, the second and third constraints in \cref{eq:G2-RMHD} can be equivalently
represented by \cref{key6467}. Therefore, we obtain the GQL representation \cref{eq:Gs-RMHD}.
\end{proof}

\section{Geometric quasilinearization for bound-preserving analysis}\label{sec:GQL-CP}
This section applies the GQL approach to analyze the bound-preserving property of numerical schemes and shows its remarkable advantages over direct and traditional approaches  by  diverse examples covering different schemes of three PDE systems in one and two dimensions.  
We only focus on first-order schemes for illustrative purposes, while 
the GQL approach is readily extensible to high-order schemes. 
The application of GQL to design high-order bound-preserving scheme will also be 
explored in \cref{sec:GQL-MMHD} for the multicomponent MHD system, to further demonstrate its capability in addressing challenging bound-preserving
problems that could not be coped with by direct approaches.

\subsection{Example 1: Euler system}\label{sec:Euler} Consider a finite volume scheme 
\begin{equation}\label{1DEuler}
\bar{\bf u}_j^{n+1} = \bar{\bf u}_j^{n} - \sigma   
\left( \hat {\bf f}_{j+\frac12} - \hat {\bf f}_{j-\frac12} \right),  
\end{equation}
for solving the 1D Euler system \cref{eq:1DEuler} on a uniform spatial mesh $\{ [x_{j-1/2}, x_{j+1/2} ] \}$ 
with $\sigma :={\Delta t}/{\Delta x}$ denoting the ratio of the temporal step-size $\Delta t$ to the spatial step-size $\Delta x$. Here $\bar{\bf u}_j^{n}$ is an approximation to the average of ${\bf u}(x,t_n)$ on cell $[x_{j-1/2}, x_{j+1/2} ]$, and 
$\hat {\bf f}_{j+1/2}$ is a numerical flux at $x_{j+1/2}$. 
For system \cref{eq:1DEuler}, it holds that ${\bf f}({\bf u}) = v {\bf u} + p(0,1,v)^\top$, which will be used in the following analysis.

We apply the GQL approach to 
analyze the bound-preserving property of the scheme \cref{1DEuler} 
with the invariant region $G$ defined in \cref{eq:EulerNS-G1}. Thanks to the 
GQL representation in \cref{thm:EulerNS-G1}, we have 
	\begin{equation}\label{eq:EulerGs}
	G =	G_*= \left  \{ {\bf u}:~{\bf u} \cdot {\bf e}_1 >0,~ {\bf u} \cdot {\bf n}_* >0~~\forall v_* \in \mathbb R \right  \}
	\end{equation}
	with ${\bf e}_1:= (1,0,0)^\top$ and ${\bf n}_* := \big( \frac{v_*^2}2, -v_*,1 \big)^\top$. 
GQL transfers the bound-preserving problem into preserving the positivity of 
 ${\bf u} \cdot {\bf e}_1$ and ${\bf u} \cdot {\bf n}_*$, which are all linear with respect to $\bf u$ and helpful for bound-preserving study.

\subsubsection*{Example 1.1: Lax--Friedrichs scheme} To clearly illustrate the basic idea, we begin with 
the simple Lax--Friedrichs scheme with the numerical flux 
$\hat {\bf f}_{j+1/2}$ taken as 
\begin{equation}\label{eq:LF}
	 \hat {\bf f}^{\mathrm {LF}} 
	( \bar{\bf u}_j^{n}, \bar{\bf u}_{j+1}^{n} ) := \frac12 
	\Big(  {\bf f}( \bar{\bf u}_j^{n}  )  
	+ {\bf f}( \bar{\bf u}_{j+1}^{n}  ) - \alpha_{n}
	( \bar{\bf u}_{j+1}^{n}  - \bar{\bf u}_j^{n} )   \Big),
\end{equation}
where $\alpha_{n} := \max_j \alpha ( \bar{\bf u}_{j}^n )$ with $\alpha ({\bf u}) :=  |v| + \sqrt{\Gamma p/\rho}$ being the spectral radius of the Jacobian matrix $\partial {\bf f}/\partial {\bf u}$. 
Given that $\bar{\bf u}_j^{n} \in G$ for all $j$, we wish $\bar{\bf u}_j^{n+1} \in G$. 
 
For respectively ${\bf n}={\bf e}_1$ and ${\bf n}={\bf n}_*$, thanks to the linearity of ${\bf u}\cdot {\bf n}$ we obtain 
\begin{equation*}
	\bar{\bf u}_j^{n+1} \cdot {\bf n} = \left( 1- \sigma \alpha_n  \right) \bar{\bf u}_j^{n} \cdot {\bf n}
	+ \frac{\sigma}2 \Big( 
	\alpha_n \bar{\bf u}_{j+1}^{n} \cdot {\bf n} - {\bf f} (\bar{\bf u}_{j+1}^{n}) \cdot {\bf n}  
	+  \alpha_n \bar{\bf u}_{j-1}^{n} \cdot {\bf n} + {\bf f} (\bar{\bf u}_{j-1}^{n}) \cdot {\bf n}   \Big).
\end{equation*}
The problem boils down to control the effect of ${\bf f}(\bar{\bf u}_{\pm}^{n})  \cdot {\bf n}$ by using the 
positivity of $\bar{\bf u}_{j\pm 1}^{n} \cdot {\bf n}$. 
For any ${\bf u}\in G$, we have ${\bf u}\cdot {\bf n} >0$ and  
\begin{align*}
\pm {\bf f} ( {\bf u} ) \cdot {\bf e}_1  & = \pm v ( {\bf u} \cdot {\bf e}_1 ) < \alpha ({\bf u}) {\bf u} \cdot {\bf e}_1,
\\
\pm {\bf f} ( {\bf u} ) \cdot {\bf n}_*  & = \pm  
v  ( {\bf u}  \cdot {\bf n}_* ) \pm   p  ( v-v_* )  
\\
& \le |v| ( {\bf u}  \cdot {\bf n}_* ) + \left( \frac12 \rho ( v-v_* )^2 + \rho e  \right)  \frac{p}{\rho \sqrt{2e}}  
\\
&= \left( |v| +  \frac{p}{\rho \sqrt{2e}} \right)  {\bf u}  \cdot {\bf n}_*  
< \alpha ({\bf u}) {\bf u} \cdot {\bf n}_*,
\end{align*}
which yield $\alpha_n \bar{\bf u}_{j\pm 1}^{n} \cdot {\bf n}  \mp {\bf f} (\bar{\bf u}_{j\pm1}^{n}) \cdot {\bf n}>0$.  
Thus 
we obtain 
$\bar{\bf u}_j^{n+1} \cdot {\bf n} > ( 1- \sigma \alpha_n  ) \bar{\bf u}_j^{n} \cdot {\bf n} \ge 0$ provided that $\sigma \alpha_n  \le 1$. This proves that  
the scheme \cref{1DEuler} with the Lax--Friedrichs flux \cref{eq:LF} is bound-preserving under the standard CFL condition $\sigma \alpha_n  \le 1$.

\begin{remark}{\em As we have seen, unlike the traditional approaches that require 
substituting the target scheme into the original nonlinear constraint of $G$ in \cref{eq:EulerNS-G1}, the GQL approach skillfully transfers all the constraints into linear ones which can be investigated in a unified way.} 
\end{remark}

\subsubsection*{Example 1.2: Gas-kinetic scheme} In order to demonstrate the advantages 
of the GQL approach in bound-preserving analysis, we consider a challenging example---the gas-kinetic scheme with the numerical flux  $\hat {\bf f}_{j+1/2}$ taken as 
\begin{align}\label{eq:GK}
&	\hat {\bf f}^{\mathrm {GK}} 
	( \bar{\bf u}_j^{n}, \bar{\bf u}_{j+1}^{n} ) := {\bf f}^+(\bar {\bf u}_j^{n}) + {\bf f}^-(\bar {\bf u}_{j+1}^{n}).
\\  \label{eq:fpm}
 &	
{\bf f}^\pm ({\bf u})  := \int_{ \mathbb R^\pm } \int_{ \mathbb R^{ M } } 
\begin{pmatrix}
	w
	\\
	w^2
	\\
	\frac{w}2 (w^2 + {\bm \xi }^2 )
\end{pmatrix} F(w, {\bm \xi}; {\bf u}) {\rm d} {\bm \xi } {\rm d} w, 
\end{align}
where $w$ is the particle velocity, ${\bm \xi} \in \mathbb R^{M}$ denotes the internal variables whose degrees of freedom $M = (3-\Gamma)/(\Gamma -1)$, 
the equilibrium distribution function $F$ is 
\begin{equation}\label{eq:Maxwellian}
	F(w, {\bm \xi}; {\bf u})  := \rho \left(  \frac{ \lambda  }{\pi} \right)^{ \frac{M+1}2  } {\rm e}^{ 
	-\lambda \left( (w-v)^2 + \| {\bm \xi} \|^2 \right) }
\end{equation}
with $\rho$ being the fluid velocity, $v$ being the fluid velocity, and $\lambda = \rho/(2 p)$.  

In a traditional approach \cite{tao1999gas}, the bound-preserving property of this scheme was studied by: (\romannumeral1) 
first,  evaluating the integration \cref{eq:fpm} as 
\begin{equation}\label{eq:fpm2}
	{\bf f}^\pm ({\bf u}) = \rho 
	\begin{pmatrix}
		\frac{v}2 {\rm erfc} (\mp \sqrt{\lambda} v ) \pm \frac12 \frac{ {\rm e}^{-\lambda v^2} }{ \sqrt{\pi \lambda} }
		\\
		\left(\frac{v^2}2 + \frac1{4\lambda} \right) {\rm erfc} (\mp \sqrt{\lambda} v ) \pm \frac{v}2 \frac{ {\rm e}^{-\lambda v^2} }{ \sqrt{\pi \lambda} }
		\\
		\left(\frac{v^3}4 + \frac{M+3}{8\lambda} v \right) {\rm erfc} (\mp \sqrt{\lambda} v ) \pm \left(\frac{v^2}4 + \frac{M+2}{8\lambda} \right)  \frac{ {\rm e}^{-\lambda v^2} }{ \sqrt{\pi \lambda} }
	\end{pmatrix}
\end{equation} 
with ${\rm erfc}(x) := \frac{2}{\sqrt{\pi}} \int_{x}^{+\infty} {\rm e}^{-w^2} dw$;   
(\romannumeral2) then, plugging the numerical flux \cref{eq:GK} with \cref{eq:fpm2} into \cref{1DEuler} and splitting the scheme \cref{1DEuler} into two steps; (\romannumeral3) and finally checking the bound-preserving properties of the  split schemes 
by verifying the original constraints of $G$ in \cref{eq:EulerNS-G1}. 
For this scheme, verifying 
the  nonlinear constraint in \cref{eq:EulerNS-G1} are difficult and complicated. 

Benefited from its linear feature, the GQL approach is highly effective for this challenging case. 
For ${\bf n}={\bf e}_1$ or ${\bf n}={\bf n}_*$, thanks to the linearity of ${\bf u}\cdot {\bf n}$ we obtain 
\begin{equation*}
	\bar{\bf u}_j^{n+1} \cdot {\bf n} =  \bar{\bf u}_j^{n} \cdot {\bf n} 
	- \sigma \left( {\bf f}^+( \bar {\bf u}_j^n ) - {\bf f}^-( \bar {\bf u}_j^n ) \right) \cdot {\bf n} 
	- \sigma {\bf f}^-( \bar {\bf u}_{j+1}^n ) \cdot {\bf n}  
	+ \sigma {\bf f}^+( \bar {\bf u}_{j-1}^n ) \cdot {\bf n}.
\end{equation*}
Note that for any ${\bf u} \in G$, we have $F(w, {\bm \xi}; {\bf u}) >0$ and 
\begin{align*}
	\pm {\bf f}^\pm ( {\bf u} ) \cdot {\bf e}_1 & =  \int_{ \mathbb R^\pm } \int_{ \mathbb R^{ M } } |w| F(w, {\bm \xi}; {\bf u}) {\rm d} {\bm \xi } {\rm d} w > 0,
	\\
	\pm {\bf f}^\pm ( {\bf u} ) \cdot {\bf n}_* & = \int_{ \mathbb R^\pm } \int_{ \mathbb R^{ M } } 
	\frac{ |w| }2 \Big( ( w - v_* )^2 + \| {\bm \xi} \|^2   \Big) F(w, {\bm \xi}; {\bf u}) {\rm d} {\bm \xi } {\rm d} w > 0.
\end{align*}
It follows, for ${\bf n} = {\bf e}_1$ and ${\bf n} = {\bf n}_*$ respectively, that 
\begin{equation}\label{key4532}
	\bar{\bf u}_j^{n+1} \cdot {\bf n} >  \bar{\bf u}_j^{n} \cdot {\bf n} 
- \sigma \left( {\bf f}^+( \bar {\bf u}_j^n ) - {\bf f}^-( \bar {\bf u}_j^n ) \right) \cdot {\bf n}. 
\end{equation}
Next, we use the positivity of ${\bf u} \cdot {\bf n}$ to bound the effect of $\left( {\bf f}^+(  {\bf u} ) - {\bf f}^-( {\bf u} ) \right) \cdot {\bf n}$ as follows: 
\begin{align*}
\left( {\bf f}^+(  {\bf u} ) - {\bf f}^-( {\bf u} ) \right) \cdot {\bf e}_1 
& = ( {\bf u} \cdot {\bf e}_1 ) \left(  \frac{ \lambda  }{\pi} \right)^{ \frac12  }   \left( \int_{ \mathbb R }  |w|  {\rm e}^{-\lambda (w-v)^2}  {\rm d} w \right)
\\
& \le   ( {\bf u} \cdot {\bf e}_1 )  \left(  \frac{ \lambda  }{\pi} \right)^{ \frac12  }   \left( \int_{ \mathbb R } ( |v| + |w-v|)  {\rm e}^{-\lambda (w-v)^2}  {\rm d} w \right)
\\
& =  ( {\bf u} \cdot {\bf e}_1 ) \left(  |v| + 1/\sqrt{\pi \lambda } \right) < a({\bf u}) {\bf u} \cdot {\bf e}_1,
\\
\left( {\bf f}^+(  {\bf u} ) - {\bf f}^-( {\bf u} ) \right) \cdot {\bf n}_* 
& = \int_{ \mathbb R } \int_{ \mathbb R^{ M } } 
\frac{ |w| }2 \Big( ( w - v_* )^2 + \| {\bm \xi} \|^2   \Big) F(w, {\bm \xi}; {\bf u}) {\rm d} {\bm \xi } {\rm d} w
\\
& \le \int_{ \mathbb R } \int_{ \mathbb R^{ M } } 
\frac{ |v| + |w-v| }2 \Big( ( w - v_* )^2 + \| {\bm \xi} \|^2   \Big) F(w, {\bm \xi}; {\bf u}) {\rm d} {\bm \xi } {\rm d} w
\\
& = |v| ( {\bf u} \cdot {\bf n}_* ) +  \frac{ \rho }{ 2 \sqrt{\pi \lambda} } \left( (v-v_*)^2 + 
\frac{M+2}{2 \lambda}  \right) 
\\
& \le |v| ( {\bf u} \cdot {\bf n}_* ) +  \frac{ \rho }{ 2 \sqrt{\pi \lambda} } \left( (v-v_*)^2 + 
\frac{M+1}{2 \lambda}  \right)  \frac{M+2}{M+1} 
\\
& = \left( |v| + \frac{M+2}{M+1 } (\pi \lambda)^{-\frac12} \right) ( {\bf u} \cdot {\bf n}_* )  < 
a({\bf u}) ( {\bf u} \cdot {\bf n}_* ).
\end{align*}
This implies $\left( {\bf f}^+( \bar {\bf u}_j^n ) - {\bf f}^-( \bar {\bf u}_j^n ) \right) \cdot {\bf n} < \alpha (\bar{\bf u}_j^n) \bar{\bf u}_j^n \cdot {\bf n} \le\alpha_n \bar{\bf u}_j^n \cdot {\bf n} $. It then follows from \cref{key4532} that 
$\bar{\bf u}_j^{n+1} \cdot {\bf n} > (1-\sigma \alpha_n)  \bar{\bf u}_j^{n} \cdot {\bf n} \ge 0 $ provided that $\sigma \alpha_n \le 1$. This proves that  
the scheme \cref{1DEuler} with the gas-kinetic flux \cref{eq:GK} is bound-preserving under the standard CFL condition $\sigma \alpha_n \le 1$.

\begin{remark}{\em The linearity of GQL brought by introducing the free auxiliary variable $v_*$ gives remarkable advantages in our above analysis. 
		Because $v_*$ is independent of all the system variables $\bf u$, it can freely move cross the integrals. 
		We no longer need to substitute a complicated scheme into the nonlinear function $g({\bf u})$ in \cref{eq:EulerNS-G1} to verify $g({\bf u})>0$. Instead, we work on the simpler but equivalent linear constraint ${\bf u} \cdot {\bf n}_*>0$. 
	The interested readers may compare the above analysis based on GQL and the traditional analysis in \cite{tao1999gas}.}
\end{remark}

\subsection{Example 2: Navier--Stokes system} Consider the scheme 
\begin{equation}\label{1DNS-scheme}
	\bar{\bf u}_j^{n+1} = \bar{\bf u}_j^{n} - \sigma   
	\big( \hat {\bf f}_{j+\frac12} - \hat {\bf f}_{j-\frac12} \big)+ \frac{\Delta t}{\Delta x^2}
	\frac{\eta}{\tt Re} {\bf H}_j 
\end{equation}
with ${\bf H}_j:= {\bf r} ( \bar {\bf u}_{j+1}^n ) - 
2 {\bf r} ( \bar {\bf u}_{j}^n ) + {\bf r} ( \bar {\bf u}_{j-1}^n )$, 
for solving the 1D dimensionless compressible Navier--Stokes equations \cref{eq:1DNS}. Here $\hat {\bf f}_{j+1/2}$ is taken as a bound-preserving numerical flux for the 1D Euler system \cref{eq:1DEuler}, for example, the Lax--Friedrichs flux \cref{eq:LF} or the gas-kinetic flux \cref{eq:GK}, which satisfy: if $\bar{\bf u}_j^{n} \in G$ for all $j$, then 
\begin{equation*} 
\big( \hat {\bf f}_{j+\frac12} - \hat {\bf f}_{j-\frac12} \big) \cdot {\bf n}  < \alpha_n \bar{\bf u}_j^{n} \cdot {\bf n} \qquad \forall j 
\end{equation*}
holds for respectively ${\bf n} = {\bf e}_1$ and ${\bf n} = {\bf n}_*$, according to the analysis in \cref{sec:Euler}. 
Thus we have 
\begin{equation}\label{1DNS-scheme2}
	\bar{\bf u}_j^{n+1} \cdot {\bf n} > (1-\sigma\alpha_n  ) \bar{\bf u}_j^{n} \cdot {\bf n} + \frac{\Delta t}{\Delta x^2}
	\frac{\eta}{\tt Re} {\bf H}_j \cdot {\bf n}.
\end{equation} 
Thanks to GQL, 
we clearly see that the bound-preserving essence is to 
control the potentially negative term ${\bf H}_j \cdot {\bf n}$  by the positive term $\bar {\bf u}_j^n \cdot {\bf n}$. 
Note ${\bf H}_j \cdot {\bf e}_1=0$, thereby $\bar{\bf u}_j^{n+1} \cdot {\bf e}_1 > (1-\sigma\alpha_n  ) \bar{\bf u}_j^{n} \cdot {\bf e}_1 \ge 0$ if $\sigma \alpha_n \le 1$. 
For any ${\bf u} \in G$ and $v_* \in \mathbb R$, we have 
\begin{align*}
 - \frac{v_*^2}2 < {\bf r}({\bf u})\cdot {\bf n}_* 
	 = \frac12 ( v - v_* )^2 + \frac{\Gamma}{{\tt Pr}~\eta} e - \frac{v_*^2}2 \le \max \left \{ 1, \frac{\Gamma}{ {\tt Pr}~\eta }  \right \} 
	\frac{1}{\rho} ( {\bf u} \cdot {\bf n}_* ) - \frac{ v_*^2 }2. 
\end{align*}
This gives 
\begin{align*}
	 {\bf H}_j \cdot {\bf n}_* & = \Big( {\bf r} ( \bar {\bf u}_{j+1}^n ) \cdot {\bf n}_* + {\bf r} ( \bar {\bf u}_{j-1}^n ) \cdot {\bf n}_* \Big)  - 
	2 {\bf r} ( \bar {\bf u}_{j}^n ) \cdot {\bf n}_*
	\\ 
	& \ge  \left( - \frac{v_*^2}2  - \frac{v_*^2}2 \right) 
	- 2 \left( \max \left \{ 1, \frac{\Gamma}{ {\tt Pr}~\eta }  \right \} 
	\frac{1}{\bar \rho_j^n  } ( \bar {\bf u}_j^n \cdot {\bf n}_* ) - \frac{ v_*^2 }2 \right)
	\\
	&  = - \frac{2}{\bar \rho_j^n  }  \max \left \{ 1, \frac{\Gamma}{ {\tt Pr}~\eta }  \right \} 
	 ( \bar {\bf u}_j^n \cdot {\bf n}_* ).  
\end{align*}
It then follows from \cref{1DNS-scheme2} that  
\begin{equation*}
	\bar{\bf u}_j^{n+1} \cdot {\bf n}_* > (1-\sigma  \alpha_n  ) \bar{\bf u}_j^{n} \cdot {\bf n}_* - \frac{\Delta t}{\Delta x^2}
	\frac{\eta}{\tt Re}
	 \frac{2}{\bar \rho_j^n  }  \max \left \{ 1, \frac{\Gamma}{ {\tt Pr}~\eta }  \right \} 
	( \bar {\bf u}_j^n \cdot {\bf n}_* ).
\end{equation*}
We then immediately have $\bar{\bf u}_j^{n+1} \cdot {\bf n}_*>0$, provided that
\begin{equation}\label{eq:CFLNS}
	\alpha_n \frac{\Delta t}{\Delta x} + \frac{\Delta t}{\Delta x^2} \frac{2}{\bar \rho_j^n {\tt Re} }  \max \left \{ \eta, \frac{\Gamma}{ {\tt Pr} }  \right \} \le 1. 
\end{equation} 
In conclusion, the scheme \cref{1DNS-scheme} is bound-preserving under condition \cref{eq:CFLNS}. 

\begin{remark} 

A standard approach for handling bound-preserving problems with multiple terms (e.g., convection term and diffusion term \cite{ZHANG2017301}, or convection term and source term \cite{zhang2011}) is based on decomposing the schemes into a convex combination of some subterms, and then enforcing all the subterms in $G$. This may lead to 
stricter conditions on the time step-size $\Delta t$. 
Since the linear feature of GQL has already naturally incorporated the convexity of $G$ into the GQL representation, technical convex decomposition is not necessary in the GQL approach. 
\end{remark}

\subsection{Example 3: Ten-moment Gaussian closure system}\label{sec:10M} 
Consider the scheme   
\begin{equation}\label{10M-scheme}
	\bar{\bf u}_{ij}^{n+1} = \bar{\bf u}_{ij}^{n} - \sigma_{1}  
	\big( \hat {\bf f}_{1,i+\frac12,j} - \hat {\bf f}_{1,i-\frac12,j} \big) 
	- \sigma_{2}   
	\big( \hat {\bf f}_{2,i,j+\frac12} - \hat {\bf f}_{2,i,j-\frac12} \big),
\end{equation}
for solving the 2D Gaussian closure equations \cref{eq:Ten-Moment}     
on a uniform Cartesian mesh $\{ [x_{i-1/2}, x_{i+1/2} ] \times [y_{j-1/2}, y_{j+1/2} ] \}$, 
with $\sigma_{1} =\frac{\Delta t}{\Delta x}$, $\sigma_{2} =\frac{\Delta t}{\Delta y}$. Here $\bar{\bf u}_{ij}^{n}$ denotes an approximation to the average of ${\bf u}(x,y,t_n)$ on each cell, and the Lax-Friedrichs numerical fluxes are considered, i.e.
\begin{align}\label{eq:3376}
	& \hat {\bf f}_{1,i+1/2,j} = \hat {\bf f}^{\rm LF}_1 ( \bar {\bf u}_{ij}^n, \bar {\bf u}_{i+1,j}^n ), \quad 
	\hat {\bf f}_{2,i,j+1/2} = \hat {\bf f}^{\rm LF}_2 ( \bar {\bf u}_{ij}^n, \bar {\bf u}_{i,j+1}^n ),
	\\ \label{eq:33736}
	& \hat {\bf f}^{\rm LF}_\ell (  {\bf u}^L,  {\bf u}^R ) 
	:= \frac12 \Big(  {\bf f}_\ell ( {\bf u}^L ) + {\bf f}_\ell ( {\bf u}^R ) - 
	\alpha_{\ell,n} ( {\bf u}^R - {\bf u}^L )  \Big), \quad \ell=1,2, 
\end{align}
where $\alpha_{\ell,n} = \max_{ij}  \alpha_{\ell} (\bar {\bf u}_{ij}^n)$, and $\alpha_{\ell} ({\bf u}) := |v_\ell| + \sqrt{ { p_{\ell \ell} }/{\rho } }$.

In the original form \cref{eq:10M-G} of $G$, the second constraint is 
the positive definiteness of a matrix $
{\bf E} - \frac{ {\bm m} \otimes {\bm m} }{2\rho}$ which nonlinearly 
depends on $\bf u$.   This leads to the challenges in   
the bound-preserving study. 
Thanks to \cref{thm:10M}, 
 the invariant region $G$ is equivalently represented as  
 	\begin{equation}\label{Gs2:10M}
  G_*= \Big  \{ {\bf u} \in \mathbb R^6 : {\bf u} \cdot {\bf e}_1 >0,~\varphi( {\bf u}; {\bm z}, {\bm v}_* )>0~~\forall  {\bm v}_* \in \mathbb R^2~~ \forall {\bm z} \in \mathbb R^2 \setminus \{ {\bf 0} \} \Big \}, 
 \end{equation}
where ${\bf e}_1 :=(1,0,\cdots,0)^\top$ and the linear function $\varphi( {\bf u}; {\bm z}, {\bm v}_* )$ is defined by \cref{eq:Phi-10M}.

We apply the GQL approach to investigate the bound-preserving property of the scheme \cref{10M-scheme} with \cref{eq:3376}. 
Similar to the Euler system, for any ${\bf u} \in G$ we have  ${\bf f}_\ell ( {\bf u} ) \cdot {\bf e}_1 = v_\ell ( {\bf u} \cdot {\bf e}_1 )$ and 
$$
\pm {\bf f}_\ell ( {\bf u} ) \cdot {\bf e}_1 \le |v_\ell| ( {\bf u} \cdot {\bf e}_1 )  <  \alpha_{\ell} ({\bf u})  ( {\bf u} \cdot {\bf e}_1 ), 
$$
which gives $\bar{\bf u}_{ij}^{n+1} \cdot {\bf e}_1 > 0$ under the CFL condition $\sigma_{1}  \alpha_{1,n} + \sigma_{2}  \alpha_{2,n} < 1$. In the following, we focus on 
the second constraint in \cref{Gs2:10M}. 
Thanks to the linearity of $\varphi( \cdot; {\bm z}, {\bm v}_* )$, we obtain 
\begin{equation}\label{key23554}
	\varphi( {\bf f}_1 ({\bf u}) ; {\bm z}, {\bm v}_* ) 
	= v_1  \varphi( {\bf u}; {\bm z}, {\bm v}_* ) + \left[ {\bm z} \cdot ( {\bm v} - {\bm v}_* )  \right] ( {\bm p}_1 \cdot {\bm z} )
\end{equation}
with the vector ${\bm p}_1 := (p_{11},p_{12})^\top$. 
For any ${\bf u} \in G$, using the AM–GM inequality gives   
\begin{align*}
	\Big| \left[ {\bm z} \cdot ( {\bm v} - {\bm v}_* )  \right] ( {\bm p}_1 \cdot {\bm z} ) 
	\Big| 
	& \le \frac12 \sqrt{\rho p_{11}} \left|  {\bm z} \cdot ( {\bm v} - {\bm v}_* ) \right|^2  + \frac1{2 \sqrt{  \rho p_{11} } }  \left| {\bm p}_1 \cdot {\bm z} \right|^2 
	\\
	&= \sqrt{ \frac{p_{11}} {\rho} } \varphi( {\bf u}; {\bm z}, {\bm v}_* ) 
	- \frac{ z_2^2 \det ( {\bf p} )  }{ 2\sqrt{\rho p_{11}} } 
	\le \sqrt{ \frac{p_{11}} {\rho} } \varphi( {\bf u}; {\bm z}, {\bm v}_* ),
\end{align*}
which together with the identity \cref{key23554} yields  
\begin{equation}\label{key235543}
	\pm \varphi( {\bf f}_1 ({\bf u}) ; {\bm z}, {\bm v}_* ) 
	\le \left( |v_1| + \sqrt{ {p_{11}} / {\rho} } \right)  \varphi( {\bf u}; {\bm z}, {\bm v}_* )  = \alpha_1 ( {\bf u} ) \varphi( {\bf u}; {\bm z}, {\bm v}_* ).  
\end{equation}
Using  
the linearity of $\varphi( \cdot; {\bm z}, {\bm v}_* )$ again  and \cref{key235543}, we obtain 
\begin{align*}
	 \varphi \left(  \hat {\bf f}_{1,i+\frac12,j} - \hat {\bf f}_{1,i-\frac12,j} ; {\bm z}, {\bm v}_* \right) & = 
	\frac12 \left[  \varphi( {\bf f}_1 (\bar {\bf u}_{i+1,j}^n) ; {\bm z}, {\bm v}_* ) - \alpha_{1,n} \varphi( \bar {\bf u}_{i+1,j}^n; {\bm z}, {\bm v}_* )   \right]
	\\
	 & \quad  + \frac12 \left[ - \varphi( {\bf f}_1 (\bar {\bf u}_{i- 1,j}^n) ; {\bm z}, {\bm v}_* ) - \alpha_{1,n} \varphi( \bar {\bf u}_{i- 1,j}^n; {\bm z}, {\bm v}_* )   \right]
\\
& \quad 
+ \alpha_{1,n} \varphi( \bar {\bf u}_{ij}^n; {\bm z}, {\bm v}_* )  
  \le \alpha_{1,n} \varphi( \bar {\bf u}_{ij}^n; {\bm z}, {\bm v}_* ). 
\end{align*}
Similarly, we have 
$
	\varphi \big(  \hat {\bf f}_{2,i,j+\frac12} - \hat {\bf f}_{2,i,j-\frac12} ; {\bm z}, {\bm v}_* \big) 
	\le \alpha_{2,n} \varphi( \bar {\bf u}_{ij}^n; {\bm z}, {\bm v}_* ). 
$ It then follows that 
\begin{equation}\label{10M-scheme3}
 \varphi \left( 	\bar{\bf u}_{ij}^{n+1}; {\bm z}, {\bm v}_* \right)  \ge 
 \left( 1 - \sigma_{1}  \alpha_{1,n} - \sigma_{2} \alpha_{2,n} \right)  \varphi \left( 	\bar{\bf u}_{ij}^{n}; {\bm z}, {\bm v}_* \right) >0,
\end{equation}
under the CFL condition $\sigma_{1}  \alpha_{1,n} + \sigma_{2}  \alpha_{2,n} < 1$. This, along with $\bar{\bf u}_{ij}^{n+1} \cdot {\bf e}_1 > 0$, implies $\bar{\bf u}_{ij}^{n+1} \in G_*=G$ and the bound-preserving property of the scheme \cref{10M-scheme} with \cref{eq:3376}.

\section{Application of GQL to design bound-preserving schemes for multicomponent MHD}\label{sec:GQL-MMHD} 
This section applies the GQL approach to develop bound-preserving high-order finite volume and discontinuous Galerkin schemes 
for the multicomponent MHD system. 
We mainly focus on the 2D case, while our discussions are  
extensible to the 3D case.  
The 2D multicomponent compressible MHD system for a ideal fluid mixture with $N_c$ components 
can be written as 
\begin{subequations}\label{eq:MMHD}
\begin{align}\label{eq:MMHDEQ}
	&\partial_t {\bf u}	+ \partial_x {\bf f}_1 ( {\bf u} ) + \partial_y {\bf f}_2 ( {\bf u} ) = {\bf 0},
	\\ \label{eq:MMHDuF}
	& {\bf u} = 
	\begin{pmatrix}
		\rho {\bf Y}
		\\
		\rho
		\\
		{\bm m}
		\\
		{\bf B}
		\\
		E
	\end{pmatrix},\quad
{\bf f}_\ell ( {\bf u} )= 	
\begin{pmatrix}
	\rho {\bf Y} v_\ell
	\\
	\rho v_\ell
	\\
	{\bm m} v_\ell - {\bf B} B_\ell + p_{tot} {\bf e}_\ell
	\\
	{\bf B} v_\ell  - {\bm v} B_\ell
	\\
	v_\ell ( E+p_{tot} ) - B_\ell ( {\bm v} \cdot {\bf B} )
\end{pmatrix},\quad \ell=1,2, 
\end{align}
\end{subequations}
along with the extra divergence-free condition on the magnetic field ${\bf B}$: 
\begin{equation}\label{eq:divB}
	\nabla \cdot {\bf B} := \partial_x B_1 + \partial_y B_2 = 0. 
\end{equation}
In \cref{eq:MMHDuF}, 
$\rho$ denotes the total density, ${\bm m}=\rho{\bm v}$ is the momentum with ${\bm v}$ being the fluid velocity, ${\bf Y}=( Y_1, \dots, Y_{n_c-1} )^\top$ denotes the mass fractions of 
the first $(n_c-1)$ components, the mass fraction of the $n_c$th component is $Y_{n_c} := 1-\sum_{k=1}^{n_c-1} Y_k$, and $p_{tot}=p+\frac{\| {\bf B} \|^2}2$ is the total pressure with the thermal pressure $p$ calculated by   
 \begin{equation}\label{MMHDdef:p}
 	p = ( \Gamma({\bf u}) - 1 ) \left( E - 
 	\frac{ \|{\bm m}\|^2 }{2\rho} -\frac{ \|{\bf B}\|^2 }2   \right), \qquad \Gamma({\bf u}):= 
 	\frac{ \sum_{k=1}^{n_c} \Gamma_k C_{v_k} Y_k  }{ \sum_{k=1}^{n_c} C_{v_k} Y_k  }, 
 \end{equation}
where $C_{v_k}>0$ and 
$\Gamma_k>1$ respectively denote the heat capacity at constant volume and the ratio of specific heats for species $k$. 

\subsection{GQL representation of invariant region} 
For the system \eqref{eq:MMHD}, the total density $\rho$ and the thermal pressure $p$ are all positive, and the mass fractions $\{Y_k\}_{k=1}^{n_c}$ are between $0$ and $1$. These constraints constitute the following invariant region 
\begin{equation}\label{eq:G-MMHD}
		G = \left\{ 
	{\bf u} \in \mathbb R^{n_c+7}:~ 0\le Y_k\le 1,~ 1\le k \le n_c,~ \rho > 0,~
	p({\bf u})  > 0 
	\right\}
\end{equation}
with $p({\bf u})$ is a highly nonlinear function defined by \eqref{MMHDdef:p}. 
Due to the strong nonlinearity and the underlying connections between the bound-preserving and divergence-free properties, 
the design and analysis of bound-preserving schemes for system \cref{eq:MMHD} are highly challenging. 

Following the GQL framework, 
the convex region $G$ in \eqref{eq:G-MMHD} can be equivalently represented as 
\begin{equation}\label{eq:Gs-MMHD}
	G_* = \left\{ 
	{\bf u} \in \mathbb R^{n_c+7}:~ {\bf u} \cdot {\bf e}_k\ge 0, 0\le k < n_c,~~{\bf u} \cdot {\bf e}_{n_c}> 0,~~
	\varphi({\bf u}; {\bm v}_*, {\bf B}_*) 
	> 0~
	\forall {\bm v}_*, {\bf B}_* \in \mathbb R^3
	\right\},
\end{equation} 
where ${\bf e}_0 := {\bf e}_{n_c} - \sum_{k=1}^{n_c-1} {\bf e}_k$, the vector  
${\bf e}_k$ for $k\ge 1$ has a $1$ in the $k$th component and zeros elsewhere, 
and 
 $\varphi({\bf u}; {\bm v}_*, {\bf B}_*) := {\bf u} \cdot {\bf n}_*  + \frac{ \|{\bf B}_*\|^2 } 2$ with $
{\bf n}_* = ( {\bf 0}_{n_c-1}, 
\frac{\|{\bm v}_*\|^2}{2}, -{\bm v}_*, -{\bf B}_*, 1  
)^\top.
$
In the following, we will derive bound-preserving schemes for \cref{eq:MMHD} based on the GQL representation \cref{eq:Gs-MMHD}. 
{\em The GQL approach will not only help 
	overcome the difficulties arising from the nonlinearity, but also play a crucial role in establishing the key relations between the bound-preserving property and a discrete divergence-free  (DDF) condition on the numerical magnetic field.}

\subsection{GQL bridges bound-preserving property and DDF condition}\label{sec:relation}
We focus on the Euler forward method for time discretization, while all our discussions are directly extensible to 
high-order strong-stability-preserving 
time discretizations \cite{GottliebShuTadmor2001} which are formally convex combinations of Euler forward.  
Consider the finite volume methods and the scheme of the cell averages of the discontinuous Galerkin method, which can be written into a unified form as 
\begin{equation}\label{2MHDscheme}
	\bar{\bf u}_{ij}^{n+1} = \bar{\bf u}_{ij}^{n} - \sigma_{1}  
	\big( \hat {\bf f}_{1,i+\frac12,j} - \hat {\bf f}_{1,i-\frac12,j} \big) 
	- \sigma_{2}   
	\big( \hat {\bf f}_{2,i,j+\frac12} - \hat {\bf f}_{2,i,j-\frac12} \big),
\end{equation}
for solving \cref{eq:MMHD}    
on a uniform Cartesian mesh $\{ {\mathcal I}_{ij} := [x_{i-1/2}, x_{i+1/2} ] \times [y_{j-1/2}, y_{j+1/2} ] \}$, 
with $\sigma_{1} =\frac{\Delta t}{\Delta x}$ and $\sigma_{2} =\frac{\Delta t}{\Delta y}$. Here $\bar{\bf u}_{ij}^{n}$ denotes the approximate cell average of ${\bf u}(x,y,t_n)$ on ${\mathcal I}_{ij}$. For a $(K+1)$th-order accurate scheme, 
in each cell ${\mathcal I}_{ij}$ 
a polynomial vector of degree $K$, denoted by 
 ${\bf U}_{ij}^n(x,y)$, is also constructed as the approximate solution, which is  either the reconstructed polynomial solution in a finite volume scheme or the discontinuous Galerkin polynomial solution. 
Denote $\{ \omega_q, x_i^{(q)} \}_{q=1}^Q$ and $\{ \omega_q, y_j^{(q)} \}_{q=1}^Q$ as the Gauss quadrature weights and nodes in $[x_{i-1/2}, x_{i+1/2} ]$ and $[y_{j-1/2}, y_{j+1/2} ]$, respectively.  
Let ${\bf u}^{\pm,q}_{i\mp\frac{1}{2},j} = {\bf U}_{ij}^n (x_{i\mp\frac12},y_j^{(q)})$, 
${\bf u}^{q,\pm}_{i,j\mp\frac{1}{2}} = {\bf U}_{ij}^n (x_i^{(q)},y_{j\mp\frac12})$. 
 The numerical fluxes in \eqref{2MHDscheme} are then given by  
\begin{equation}\label{eq:MMHD_numflux}
\hat {\bf f}_{1,i+\frac12,j} = \sum\limits_{q =1}^{Q}  \omega_q 
		\hat {\bf f}_1^{\rm LF}( {\bf u}^{-,q}_{i+\frac{1}{2},j}, {\bf u}^{+,q}_{i+\frac{1}{2},j} ),~~ \hat {\bf f}_{2,i,j+\frac12} = \sum\limits_{q =1}^{Q} \omega_q
		\hat {\bf f}_2^{\rm LF}( {\bf u}^{q,-}_{i,j+\frac{1}{2}} , {\bf u}^{q,+}_{i,j+\frac{1}{2}}   ),
\end{equation}
where 
$\hat {\bf f}_\ell^{\rm LF}(\cdot, \cdot)$ is taken as the 
Lax-Friedrichs flux \cref{eq:33736} with the numerical viscosity parameters 
	\begin{equation}\label{eq:2DhighLFpara}
	\alpha_{1,n} \ge 
	\max_{ i,j,\mu}  \widehat \alpha_1 \big(  { \bf u }_{i+\frac12,j}^{\mp,q} ,  { \bf u }_{i-\frac12,j}^{\pm,q}  \big)
	,\quad \alpha_{2,n} \ge   \max_{ i,j,q}  \widehat \alpha_2 \big(  { \bf u }_{i,j+\frac12}^{q,\mp} ,  { \bf u }_{i,j-\frac12}^{q,\pm}  \big). 
\end{equation}
Here 
$\widehat \alpha_\ell ( {\bf u}, \tilde{\bf u} ) =
\max\big\{ |v_\ell|+ {\mathcal{C}}_\ell, |\tilde v_\ell| +  \tilde {\mathcal{C}}_\ell , \frac{| \sqrt{\rho} v_\ell + \sqrt{\tilde \rho} \tilde v_\ell |} {\sqrt{\rho}+\sqrt{\tilde \rho}} + \max\{ {\mathcal{C}}_\ell , \tilde {\mathcal{C}}_\ell \}  \big\}
+ \frac{ \|{\bf B}-\tilde{\bf B}\| }{ \sqrt{\rho} + \sqrt{\tilde \rho}  }$, $\ell=1,2$,  and ${\mathcal C}_1$ and ${\mathcal C}_2$ are the fast magneto-acoustic speeds in the $x$- and $y$-directions, respectively.

Seeking a condition for the scheme \cref{2MHDscheme} to be bound-preserving is very challenging, due to the complexity of the system \cref{eq:MMHD} and the region \cref{eq:G-MMHD} as well as 
the intrinsic relations between the bound-preserving property and the DDF condition,  
On one hand, it is very difficult to establish such relations, since   
the bound-preserving property is an {\em algebraic} property while the DDF condition is a discrete {\em differential} property. In fact, their relations remained unclear for a long time, until the recent work \cite{Wu2017a} on the single-component MHD case.  
On the other hand, the DDF condition strongly couples 
the states $\{ {\bf u}^{\pm,q}_{i\mp\frac{1}{2},j}, {\bf u}^{q,\pm}_{i,j\mp\frac{1}{2}} \}$, making the traditional or standard analysis approaches (which typically rely on decomposing high-order or/and multidimensional schemes 
into convex combinations of first-order 1D schemes \cite{zhang2010,zhang2010b,zhang2012maximum}) {\em inapplicable} to the present case.

First, let us consider the first-order scheme to gain some insights. In this case, the polynomial degree $K=0$ so that 
${\bf U}_{ij}^n (x,y) \equiv \bar {\bf u}_{ij}^n$ for all $(x,y)\in {\mathcal I}_{ij}$, and we 
can reformulate the scheme \cref{2MHDscheme} as 
\begin{equation}\label{eq:MHD1st}
	\bar {\bf u}_{ij}^{n+1} = (1-  \lambda  ) \bar {\bf u}_{ij}^n + \sigma_1 \alpha_{1,n} {\bf \Pi}_1 + \sigma_2 \alpha_{2,n} {\bf \Pi}_2,
\end{equation}
with $\lambda := \sigma_1 \alpha_{1,n} +  \sigma_2 \alpha_{2,n} $, and  
{\small	\begin{align*}
	& {\bf \Pi}_1 = \frac12 
	\left( \bar {\bf u}_{i+1,j}^n - \frac{ {\bf f}_1( \bar {\bf u}_{i+1,j}^n)}{\alpha_{1,n}} 
	+ \bar {\bf u}_{i-1,j}^n + \frac{ {\bf f}_1( \bar {\bf u}_{i-1,j}^n)}{\alpha_{1,n}} \right),  
	{\bf \Pi}_2 = \frac12  \left(
	 \bar {\bf u}_{i,j+1}^n - \frac{ {\bf f}_2( \bar {\bf u}_{i,j+1}^n)}{\alpha_{2,n}} 
	+ \bar {\bf u}_{i,j-1}^n + \frac{ {\bf f}_2( \bar {\bf u}_{i,j-1}^n)}{\alpha_{2,n}} \right). 
\end{align*}
}

\begin{theorem}\label{thm:1st_MMHD}
If $\bar{\bf u}_{ij}^n \in G$ for all $i$ and $j$, then, under the CFL condition $\lambda \le 1$, the solution $\bar{\bf u}_{ij}^{n+1}$ of \eqref{eq:MHD1st} satisfies 
\begin{align}\label{eq:constrant1_1st}
	& \bar{\bf u}_{ij}^{n+1} \cdot {\bf e}_k \ge 0, \quad 0\le k < n_c,\qquad \bar{\bf u}_{ij}^{n+1} \cdot {\bf e}_{n_c} > 0,
	\\ \label{eq:constrant2_1st}
	& \varphi( \bar{\bf u}_{ij}^{n+1}; {\bm v}_*, {\bf B}_*) 
	> 
	-\Delta t ( {\bm v}_* \cdot {\bf B}_* ) {\rm div}_{ij} \bar {\bf B}
	 \qquad 
	\forall {\bm v}_*, {\bf B}_* \in \mathbb R^3,
\end{align} 
where 
$
{\rm div}_{ij} \bar {\bf B} := \frac{ \bar B_{1,i+1,j}^n - \bar B_{1,i-1,j}^n }{2 \Delta x} 
+ \frac{ \bar B_{2,i,j+1}^n - \bar B_{2,i,j-1}^n }{2 \Delta y} 
$ is a discrete divergence. 
Furthermore, if the states $\{\bar{\bf u}_{ij}^n\}$ satisfy the DDF condition ${\rm div}_{ij} \bar {\bf B} = 0$, then \cref{eq:constrant1_1st}--\cref{eq:constrant2_1st} imply $\bar{\bf u}_{ij}^{n+1} \in G_*=G$.
\end{theorem}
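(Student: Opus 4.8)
The plan is to exploit the defining feature of the GQL representation \cref{eq:Gs-MMHD}: every constraint there is an \emph{affine} functional of $\bf u$. Under the CFL condition $\lambda\le 1$, the reformulation \cref{eq:MHD1st} exhibits $\bar{\bf u}_{ij}^{n+1}$ as the combination $(1-\lambda)\bar{\bf u}_{ij}^{n} + \sigma_1\alpha_{1,n}{\bf \Pi}_1 + \sigma_2\alpha_{2,n}{\bf \Pi}_2$, whose weights are nonnegative and sum to one. Since ${\bf u}\mapsto {\bf u}\cdot{\bf e}_k$ and ${\bf u}\mapsto\varphi({\bf u};{\bm v}_*,{\bf B}_*)$ are affine, each commutes with this convex combination, so it suffices to estimate every GQL functional separately on $\bar{\bf u}_{ij}^{n}$, ${\bf \Pi}_1$, and ${\bf \Pi}_2$. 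Because each ${\bf \Pi}_\ell$ is a Lax--Friedrichs half-sum, these reduce further to single-state bounds relating ${\bf n}_*\cdot{\bf f}_\ell({\bf u})$ to $\alpha_{\ell,n}\varphi({\bf u})$, to be controlled by the fast-speed choice in \cref{eq:2DhighLFpara}.

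First I would dispatch the mass-fraction and density constraints \cref{eq:constrant1_1st}. For these linear functionals one has ${\bf e}_k\cdot{\bf f}_\ell({\bf u}) = v_\ell\,({\bf e}_k\cdot{\bf u})$, and likewise ${\bf e}_0={\bf e}_{n_c}-\sum_{k=1}^{n_c-1}{\bf e}_k$ reads off $\rho Y_{n_c}$; hence each half-sum gives ${\bf e}_k\cdot{\bf \Pi}_\ell = \tfrac12\sum_{\pm}(1\mp v_\ell/\alpha_{\ell,n})({\bf e}_k\cdot\bar{\bf u}^n)\ge 0$ as soon as $\alpha_{\ell,n}\ge|v_\ell|$. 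These functionals carry \emph{no} magnetic flux, which is precisely why no divergence term appears in \cref{eq:constrant1_1st}; the density component is strictly positive because $\rho>0$.

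The crux is the pressure constraint \cref{eq:constrant2_1st}. Rewriting $\varphi({\bf u};{\bm v}_*,{\bf B}_*) = \tfrac{\rho}{2}\|{\bm v}-{\bm v}_*\|^2 + \tfrac12\|{\bf B}-{\bf B}_*\|^2 + \tfrac{p}{\Gamma({\bf u})-1}$ exhibits it as a sum of nonnegative pieces, mirroring the ideal-MHD form of \cref{thm:IMHD-GQL} with $\Gamma$ replaced by $\Gamma({\bf u})$ via \cref{MMHDdef:p}. The key single-state estimate I would establish is $\big|{\bf n}_*\cdot{\bf f}_\ell({\bf u}) - B_\ell\,({\bm v}_*\cdot{\bf B}_*)\big| \le \alpha_{\ell,n}\,\varphi({\bf u};{\bm v}_*,{\bf B}_*)$ for all ${\bm v}_*,{\bf B}_*$, in which the shift $B_\ell\,({\bm v}_*\cdot{\bf B}_*)$ is exactly the residual that the magnetic flux cannot absorb. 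Feeding this into ${\bf \Pi}_\ell$ makes the symmetric parts cancel against $\alpha_{\ell,n}$ and leaves $\sigma_1\alpha_{1,n}\varphi({\bf \Pi}_1)\ge -\tfrac{\Delta t}{2\Delta x}(\bar B_{1,i+1,j}^{n}-\bar B_{1,i-1,j}^{n})({\bm v}_*\cdot{\bf B}_*)$ and the $y$-analogue for ${\bf \Pi}_2$; summing these and using the definition of ${\rm div}_{ij}\bar{\bf B}$ assembles exactly $-\Delta t\,({\bm v}_*\cdot{\bf B}_*)\,{\rm div}_{ij}\bar{\bf B}$, while $(1-\lambda)\varphi(\bar{\bf u}_{ij}^{n})\ge 0$, which yields \cref{eq:constrant2_1st}.

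I expect the main obstacle to be the key single-state estimate itself. Reducing ${\bf n}_*\cdot{\bf f}_\ell({\bf u}) - B_\ell\,({\bm v}_*\cdot{\bf B}_*) = v_\ell\,\varphi({\bf u}) - B_\ell\,({\bf B}-{\bf B}_*)\cdot({\bm v}-{\bm v}_*) + p_{tot}(v_\ell-v_{\ell*}) - \tfrac{v_\ell}{2}\|{\bf B}_*\|^2$, one must dominate the residual terms by ${\mathcal C}_\ell\,\varphi({\bf u})$, the fast magneto-acoustic part of $\alpha_{\ell,n}$, \emph{uniformly} in the free parameters ${\bm v}_*,{\bf B}_*$ (note $\|{\bf B}_*\|$ is unbounded, matched by $\tfrac12\|{\bf B}-{\bf B}_*\|^2$ in $\varphi$). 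This needs careful completing of squares together with the explicit fast-speed bound built into \cref{eq:2DhighLFpara}; it is the multicomponent analogue of the single-component MHD estimate of \cite{Wu2017a}, and its strict version is what forces the strict inequalities and thus covers the endpoint $\lambda=1$. Finally, imposing the DDF condition ${\rm div}_{ij}\bar{\bf B}=0$ annihilates the right-hand side of \cref{eq:constrant2_1st}, so $\varphi(\bar{\bf u}_{ij}^{n+1};{\bm v}_*,{\bf B}_*)>0$ for all ${\bm v}_*,{\bf B}_*$; combined with \cref{eq:constrant1_1st} this is precisely the membership criterion of \cref{eq:Gs-MMHD}, giving $\bar{\bf u}_{ij}^{n+1}\in G_*=G$.
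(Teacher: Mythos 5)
Your scaffolding coincides with the paper's: rewrite \cref{eq:MHD1st} as a convex combination with weights $(1-\lambda,\,\sigma_1\alpha_{1,n},\,\sigma_2\alpha_{2,n})$, push the affine GQL functionals through it, dispatch \cref{eq:constrant1_1st} via ${\bf e}_k\cdot{\bf f}_\ell({\bf u})=v_\ell({\bf e}_k\cdot{\bf u})$, and reduce \cref{eq:constrant2_1st} to lower bounds on $\varphi({\bf \Pi}_\ell;{\bm v}_*,{\bf B}_*)$ whose residuals assemble into $-\Delta t({\bm v}_*\cdot{\bf B}_*)\,{\rm div}_{ij}\bar{\bf B}$; your algebraic identity for ${\bf n}_*\cdot{\bf f}_\ell({\bf u})-B_\ell({\bm v}_*\cdot{\bf B}_*)$ is also correct. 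However, the pivot of your argument --- the single-state estimate $\big|{\bf n}_*\cdot{\bf f}_\ell({\bf u})-B_\ell({\bm v}_*\cdot{\bf B}_*)\big|\le\alpha_{\ell,n}\,\varphi({\bf u};{\bm v}_*,{\bf B}_*)$ --- is false, and no viscosity of wave-speed magnitude can rescue it. Test it at ${\bm v}_*={\bm v}$, ${\bf B}_*={\bf B}$: in your own identity the second and third residual terms vanish, $\varphi=\rho e$, and the left side collapses to $\big|v_\ell\big(\rho e-\tfrac12\|{\bf B}\|^2\big)\big|$, so the inequality forces $\alpha_{\ell,n}\ge|v_\ell|\big(\tfrac{\|{\bf B}\|^2}{2\rho e}-1\big)$, which blows up as $p\to0$ or $\|{\bf B}\|\to\infty$, whereas the parameter in \cref{eq:2DhighLFpara} grows only like $|v_\ell|+{\mathcal C}_\ell=O\big(|v_\ell|+\|{\bf B}\|/\sqrt{\rho}\big)$. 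Equivalently, $v_\ell\varphi-\tfrac{v_\ell}{2}\|{\bf B}_*\|^2=v_\ell({\bf u}\cdot{\bf n}_*)$, and ${\bf u}\cdot{\bf n}_*$ is not sign-definite on $G$, so this term cannot be dominated per state.

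This is not a patchable detail but the known structural obstruction already present in the single-component case: the Lax--Friedrichs splitting for MHD admits no per-state GQL positivity, which is exactly why $\widehat\alpha_\ell({\bf u},\tilde{\bf u})$ in \cref{eq:2DhighLFpara} is an intrinsically \emph{two-state} quantity containing the magnetic jump $\|{\bf B}-\tilde{\bf B}\|/(\sqrt{\rho}+\sqrt{\tilde\rho}\,)$. Accordingly, the paper never bounds the two flux contributions inside ${\bf \Pi}_1$ separately; it invokes the two-state estimate of \cite[Lemma 2.6]{Wu2017a} applied to the pairs $(\bar{\bf u}_{i-1,j}^n,\bar{\bf u}_{i+1,j}^n)$ and $(\bar{\bf u}_{i,j-1}^n,\bar{\bf u}_{i,j+1}^n)$, in which the dangerous ${\bf B}_*$-quadratic and bilinear contributions of the two states cancel or are absorbed \emph{jointly} (using the jump-augmented viscosity), leaving only the residual $\frac{{\bm v}_*\cdot{\bf B}_*}{2\alpha_{\ell,n}}$ times the difference of normal magnetic components. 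Your triangle-inequality splitting destroys precisely this cancellation: take $\bar{\bf u}_{i+1,j}^n=\bar{\bf u}_{i-1,j}^n$ equal to the counterexample state above; then ${\bf \Pi}_1$ equals that state and $\varphi({\bf \Pi}_1;{\bm v},{\bf B})=\rho e>0$ holds trivially, while your per-state bounds return a vacuous (negative) lower bound. Everything downstream of the lemma --- the assembly of ${\rm div}_{ij}\bar{\bf B}$, the $(1-\lambda)\varphi(\bar{\bf u}_{ij}^n)$ term, the strictness that covers $\lambda=1$, and the final membership conclusion via the GQL representation \cref{eq:Gs-MMHD} --- matches the paper and stands once the two-state lemma replaces your single-state claim.
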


\begin{proof}
	For $0\le k < n_c$ and any ${\bf u}\in \{ \bar{\bf u}_{ij}^n \}$, we have 
	 $\pm {\bf f}_\ell (  {\bf u} ) \cdot {\bf e}_k 
	= \pm v_\ell ( {\bf u} \cdot {\bf e}_k  ) \le \alpha_{\ell,n} ( {\bf u} \cdot {\bf e}_k  ) $, which implies 
	${\bf \Pi}_\ell \cdot {\bf e}_k \ge 0$. Similarly, ${\bf \Pi}_\ell \cdot {\bf e}_{n_c} > 0$. These lead to \cref{eq:constrant1_1st}. 
	Following \cite[Lemma 2.6]{Wu2017a}, we can derive that  
	\begin{align*}
	 \varphi ( {\bf \Pi}_1  ; {\bm v}_*, {\bf B}_* ) >  \frac{ {\bm v}_* \cdot {\bf B}_* }{2\alpha_{1,n}} \left(  \bar B_{1,i-1,j}^n - \bar B_{1,i+1,j}^n \right),~~ 
	\varphi ( {\bf \Pi}_2 ; {\bm v}_*, {\bf B}_* ) >  \frac{ {\bm v}_* \cdot {\bf B}_* }{2\alpha_{2,n}} \left(  \bar B_{2,i,j-1}^n - \bar B_{2,i,j+1}^n \right). 
	\end{align*}
Thanks to the linearity of $\varphi (\cdot ; {\bm v}_*, {\bf B}_*)$, it then follows from \cref{eq:MHD1st} that 
\begin{align*}
\varphi( \bar{\bf u}_{ij}^{n+1}; {\bm v}_*, {\bf B}_*)  &= (1-\lambda) \varphi( \bar{\bf u}_{ij}^{n}; {\bm v}_*, {\bf B}_*) 
+ \sigma_1 \alpha_{1,n} \varphi ({\bf \Pi}_1 ; {\bm v}_*, {\bf B}_*) + \sigma_2 \alpha_{2,n} \varphi ({\bf \Pi}_2 ; {\bm v}_*, {\bf B}_*)
\\
& > 
 (1-\lambda) \varphi( \bar{\bf u}_{ij}^{n}; {\bm v}_*, {\bf B}_*) - \Delta t ( {\bm v}_* \cdot {\bf B}_* ) {\rm div}_{ij} \bar {\bf B}, 
\end{align*}
which yields \cref{eq:constrant2_1st} under the CFL condition $\lambda \le 1$. 
\end{proof}

\cref{thm:1st_MMHD} shows the connection between  
the bound-preserving property and a DDF condition, which is bridged by \cref{eq:constrant2_1st} with the help of the free auxiliary variables $\{ {\bm v}_*, {\bf B}_* \}$ in the GQL representation \cref{eq:Gs-MMHD}. 
This demonstrates the essential importance of the GQL approach in establishing this connection and its significant advantages for bound-preserving analysis and design.

Now, we use the GQL approach to explore bound-preserving high-order schemes with 
$K\ge 1$.  
Denote $\{ \widehat x_i^{(\beta)}   \}_{\beta=1} ^L$ and $\{ \widehat y_j^{(\beta)} \}_{\beta=1} ^{L}$ as the Gauss--Lobatto quadrature points in $[x_{i-1/2}, x_{i+1/2} ]$ and $[y_{j-1/2}, y_{j+1/2} ]$, respectively, and $\{\widehat \omega_\beta\}_{\beta =1}^L$ as the weights, with $L = \left\lceil \frac{K+3}2 \right\rceil$. 
Similar to \cref{thm:1st_MMHD} and \cite[Theorem 4.7]{Wu2017a}, the following result can be derived with the proof omitted here. 
\begin{theorem}\label{thm:high_MMHD}
	If, for all $i$ and $j$, $\bar{\bf u}_{ij}^n \in G$ and the polynomial vector ${\bf U}_{ij}^n(x,y)$ satisfies 
	\begin{equation}\label{PPLimiterCondition}
		{\bf U}_{ij}^n( \widehat x_i^{(\beta)}, y_j^{(q)} ), {\bf U}_{ij}^n(  x_i^{(q)}, \widehat y_j^{(\beta)} ) \in G \qquad \forall \beta,q,
	\end{equation}
then, the solution $\bar{\bf u}_{ij}^{n+1}$ of the scheme \cref{2MHDscheme} satisfies 
\begin{equation}\label{eq:constrant2_kst}
	\varphi( \bar{\bf u}_{ij}^{n+1}; {\bm v}_*, {\bf B}_*) 
	> 2( \widehat \omega_1 - \lambda ) \varphi( {\bf \Pi} ; {\bm v}_*, {\bf B}_*) 
	-\Delta t ( {\bm v}_* \cdot {\bf B}_* ) {\rm div}_{ij}  {\bf B}
\end{equation}
with ${\bf \Pi} := 
\frac{1}{ 2\lambda }
\sum_{q} { \omega _q  }
\big[
\sigma_1 \alpha_{1,n} \big(
{ \bf u }_{i+\frac12,j}^{-,q} 
+ { \bf u }_{i-\frac12,j}^{+,q}  
\big)
+ \sigma_2 \alpha_{2,n} \big(
{ \bf u }_{i,j+\frac12}^{q,-} 
+ { \bf u }_{i,j-\frac12}^{q,+}  
\big)
\big] \in G$. 
Furthermore, under the CFL condition $\lambda \le \widehat \omega_1$, we have 
\begin{align}\label{eq:constrant1_kst}
	 & \bar{\bf u}_{ij}^{n+1} \cdot {\bf e}_k\ge 0, \quad ~ 0\le k < n_c, \qquad \bar{\bf u}_{ij}^{n+1} \cdot {\bf e}_{n_c} > 0,
	\\   \label{eq:constrant3_kst}
	& \varphi( \bar{\bf u}_{ij}^{n+1}; {\bm v}_*, {\bf B}_*) > -\Delta t ( {\bm v}_* \cdot {\bf B}_* ) {\rm div}_{ij}  {\bf B} 
	\qquad 
	\forall {\bm v}_*, {\bf B}_* \in \mathbb R^3,
\end{align} 
where the discrete divergence is defined as ${\rm div} _{ij} {\bf B} := \frac12 \left( {\rm div} _{ij}^{{-}} {\bf B} + {\rm div} _{ij}^{{+}} {\bf B} \right)$ with 
\begin{align*}
	&
	{\rm div} _{ij}^{\mp} {\bf B} := \frac{1}{\Delta x} \sum \limits_{q=1}^{Q} \omega_q \Big(  B_{1,i+\frac{1}{2},j}^{\mp,q}
	- B_{1,i-\frac{1}{2},j} ^{\pm,q}  \Big) + \frac{1}{\Delta y} \sum \limits_{q=1}^{Q} \omega_q \Big(  B_{2,i,j+\frac{1}{2}}^{q,\mp}
	- B_{2,i,j-\frac{1}{2}} ^{q,\pm}  \Big).
\end{align*}
\end{theorem}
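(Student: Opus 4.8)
The plan is to reduce the high-order two-dimensional scheme \cref{2MHDscheme} to a convex combination of the first-order Lax--Friedrichs building blocks already controlled in \cref{thm:1st_MMHD}, via the tensor-product Gauss--Lobatto $\times$ Gauss quadrature decomposition pioneered by Zhang and Shu and adapted to ideal MHD in \cite{Wu2017a}. Since $\{\widehat x_i^{(\beta)}\}$ and $\{\widehat y_j^{(\beta)}\}$ are Gauss--Lobatto nodes, their endpoints coincide with the cell interfaces $x_{i\pm1/2}$ and $y_{j\pm1/2}$ and carry the common boundary weight $\widehat\omega_1=\widehat\omega_L$. First I would expand the cell average $\bar{\bf u}_{ij}^n$ by this quadrature in each coordinate direction, weight the $x$- and $y$-expansions by $\sigma_1\alpha_{1,n}/\lambda$ and $\sigma_2\alpha_{2,n}/\lambda$ respectively, and substitute into \cref{2MHDscheme} together with the flux formula \cref{eq:MMHD_numflux}. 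The boundary Gauss--Lobatto nodes then merge with the Lax--Friedrichs fluxes at the corresponding interfaces to form the half-updates $\frac12\big({\bf u}\mp\alpha_{\ell,n}^{-1}{\bf f}_\ell({\bf u})\big)$, exactly the structure appearing in ${\bf \Pi}_1,{\bf \Pi}_2$ of \cref{thm:1st_MMHD}.

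Counting the quadrature weights gives the crucial identity: the interior Gauss--Lobatto nodes ($2\le\beta\le L-1$) contribute with total weight $1-2\widehat\omega_1$, the Lax--Friedrichs building blocks (involving the interface traces of cell ${\mathcal I}_{ij}$ and its neighbors) with total weight $2\lambda$, and the residual own-interface mass $2(\widehat\omega_1-\lambda)$ is collected into the single convex average ${\bf \Pi}$. By hypothesis \cref{PPLimiterCondition} every interior node and every interface trace lies in $G$, so by the convexity of $G$ both the interior contribution and ${\bf \Pi}$ lie in $G$. For the linear constraints I would use $\pm{\bf f}_\ell({\bf u})\cdot{\bf e}_k\le\alpha_{\ell,n}\,{\bf u}\cdot{\bf e}_k$ to see that each building block pairs nonnegatively (strictly positively for $k=n_c$) with ${\bf e}_k$; provided $\lambda\le\widehat\omega_1$ the residual coefficient $2(\widehat\omega_1-\lambda)$ is nonnegative, so all coefficients are nonnegative and \cref{eq:constrant1_kst} follows.

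For the nonlinear constraint I would invoke the linearity of $\varphi(\cdot;{\bm v}_*,{\bf B}_*)$ to distribute it across the decomposition and apply the first-order estimate of \cref{thm:1st_MMHD} (equivalently \cite[Lemma~2.6]{Wu2017a}) to each Lax--Friedrichs building block, each of which satisfies $\varphi(\cdot)>(\text{an interface divergence contribution})$. Summing these contributions over all building blocks, with the upwind and downwind interface traces, reproduces precisely the symmetric discrete divergence ${\rm div}_{ij}{\bf B}=\frac12({\rm div}_{ij}^-{\bf B}+{\rm div}_{ij}^+{\bf B})$; together with $\varphi>0$ on the interior nodes this yields \cref{eq:constrant2_kst} with no restriction beyond the flux-dominance \cref{eq:2DhighLFpara}. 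Finally, when $\lambda\le\widehat\omega_1$ the coefficient $2(\widehat\omega_1-\lambda)$ is nonnegative and ${\bf \Pi}\in G$ forces $\varphi({\bf \Pi};{\bm v}_*,{\bf B}_*)>0$, so \cref{eq:constrant2_kst} immediately implies \cref{eq:constrant3_kst}.

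The main obstacle I anticipate is the bookkeeping that recovers the exact high-order discrete divergence: unlike scalar or Euler bound-preserving proofs, the magnetic-field terms prevent a clean direction-by-direction splitting, because each Lax--Friedrichs block leaves a residual proportional to ${\bm v}_*\cdot{\bf B}_*$ times a one-sided difference of normal magnetic-field traces. Verifying that these residuals assemble into $-\Delta t({\bm v}_*\cdot{\bf B}_*){\rm div}_{ij}{\bf B}$ with the stated ${\rm div}_{ij}^\pm$ operators, and that the Gauss-point sums match the flux definition \cref{eq:MMHD_numflux}, is the delicate step; it is exactly the computation carried out for the single-component case in \cite[Theorem~4.7]{Wu2017a}, and the multicomponent extension only adds the linear mass-fraction constraints, which are handled by the elementary ${\bf e}_k$ estimates above.
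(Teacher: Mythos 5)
Your proposal is correct and follows essentially the route the paper itself prescribes: the paper omits the proof of \cref{thm:high_MMHD}, stating it is derived ``similar to \cref{thm:1st_MMHD} and \cite[Theorem 4.7]{Wu2017a}'', which is precisely your Gauss--Lobatto quadrature decomposition with the residual own-interface mass $2(\widehat\omega_1-\lambda)$ collected into ${\bf \Pi}$, the elementary $\pm{\bf f}_\ell({\bf u})\cdot{\bf e}_k\le\alpha_{\ell,n}\,{\bf u}\cdot{\bf e}_k$ estimates for the linear constraints, and the linearity of $\varphi(\cdot;{\bm v}_*,{\bf B}_*)$ combined with the \cite[Lemma 2.6]{Wu2017a}-type estimates whose interface residuals assemble into $-\Delta t({\bm v}_*\cdot{\bf B}_*)\,{\rm div}_{ij}{\bf B}$. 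You also correctly sidestep the pitfall the paper flags---demanding that each first-order block lie in $G$ individually, which the divergence coupling makes impossible---by letting the Lax--Friedrichs blocks carry one-sided magnetic-trace residuals that only combine globally into the symmetric discrete divergence, exactly the point that makes the classical convex-decomposition argument inapplicable while the GQL bookkeeping succeeds.
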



\begin{remark}\label{rem:high_MMHD}
The condition \cref{PPLimiterCondition} in \cref{thm:high_MMHD} is a standard condition for  bound-preserving finite volume and discontinuous Galerkin schemes (see \cite{zhang2010,zhang2010b}). This condition can be easily enforced by a simple scaling limiter (see \cref{sec:limiter}). 
Thanks to the GQL representation \cref{eq:Gs-MMHD}, we conclude from \cref{eq:constrant1_kst}--\cref{eq:constrant3_kst} that, in order to ensure $\bar{\bf u}_{ij}^{n+1} \in G_*=G$, a DDF condition 
\begin{equation}\label{DDF1}
	{\rm div} _{ij} {\bf B} := \frac12 \left( {\rm div} _{ij}^{{-}} {\bf B} + {\rm div} _{ij}^{{+}} {\bf B} \right) = 0 
\end{equation}
is also required. Unfortunately, the high-order schemes \cref{2MHDscheme} do not preserve the DDF condition \cref{DDF1}, which depends on the numerical solution information from adjacent cells. 
Although a few globally divergence-free techniques (e.g.~\cite{Li2011,Fu2018,chandrashekar2019global})
were developed and can enforce the condition \cref{DDF1}, the local scaling limiter for \cref{PPLimiterCondition} 
will destroy the globally divergence-free property. 
Notice that 
the locally divergence-free technique (e.g.~\cite{Li2005}) is compatible with the local scaling limiter, but can only guarantee ${\rm div} _{ij}^{{-}} {\bf B}=0$. 
In \cref{sec:highBP}, we will use the GQL approach to explore how to eliminate the effect of the remaining part ${\rm div} _{ij}^{{+}} {\bf B}$ by properly modifying the scheme \cref{2MHDscheme}. 
\end{remark}

\subsection{Seek high-order provably bound-preserving schemes via GQL}\label{sec:highBP} 
We have established the relations between the bound-preserving and divergence-free properties at the numerical level. 
Interestingly, at the continuous level, bound preservation is also closely related to the divergence-free condition \cref{eq:divB}: If condition \cref{eq:divB} is slightly violated, then even the exact solution of system \cref{eq:MMHD} may not stay in $G$; see \cite{WuShu2018} 
for a discussion which is also valid for system \cref{eq:MMHD}. 
To address this issue, we consider a modified formulation of the multicomponent MHD equations 
\begin{equation}\label{eq:MMHD-modify}
	\partial_t {\bf u}	+ \partial_x {\bf f}_1 ( {\bf u} ) + \partial_y {\bf f}_2 ( {\bf u} ) + (\nabla \cdot {\bf B})  {\bf S}({\bf u}) = {\bf 0}
\end{equation}
by adding an extra source term to \cref{eq:MMHDEQ} with ${\bf S}({\bf u})=({\bf 0}_{n_c},{\bf B}, {\bm v}, {\bm v}\cdot {\bf B})^\top$. 
Such a formulation was first proposed by Godunov  \cite{Godunov1972} for the purpose of entropy symmetrization in the single-component MHD case. Notice that, for divergence-free initial conditions, the exact solutions of the modified form  \cref{eq:MMHD-modify} and the standard form \cref{eq:MMHD} are the same. However, if the divergence-free condition \cref{eq:divB} is violated, 
the extra source term in the modified form  \cref{eq:MMHD-modify} becomes beneficial and helps 
 keep the exact solutions always in $G$; see \cite{WuShu2019} 
 for an analysis which also works for system \cref{eq:MMHD-modify}. 
This finding motivates us to explore bound-preserving schemes based on suitable discretization of the 
modified form  \cref{eq:MMHD-modify}. Thus we consider  
\begin{equation}\label{2MHDscheme_BP}
	\bar{\bf u}_{ij}^{n+1} = \bar{\bf u}_{ij}^{n} - \sigma_{1}  
	\big( \hat {\bf f}_{1,i+\frac12,j} - \hat {\bf f}_{1,i-\frac12,j} \big) 
	- \sigma_{2}   
	\big( \hat {\bf f}_{2,i,j+\frac12} - \hat {\bf f}_{2,i,j-\frac12} \big) -  \widehat {\bf S}_{ij} 
\end{equation}
by adding a properly discretized source term $\widehat {\bf S}_{ij}$ into the standard finite volume or discontinuous Galerkin schemes \eqref{2MHDscheme}. 
As discussed in \cref{rem:high_MMHD}, we can adopt a locally divergence-free technique for the magnetic components of ${\bf U}_{ij}^n(x,y)$ such that ${\rm div} _{ij}^{{-}} {\bf B}=0$. 
This gives $2{\rm div} _{ij} {\bf B}={\rm div} _{ij}^{{+}} {\bf B} = {{\rm div} _{ij}^{{+}} {\bf B} - {\rm div} _{ij}^{{-}} {\bf B}}$, thereby leading to 
\begin{equation}\label{eq:LDF}
{\rm div} _{ij} {\bf B} 	
= \frac{1}{2\Delta x} \sum \limits_{q=1}^{Q} \omega_q 
\Big( {\jump{B_1}}_{i+\frac{1}{2},j}^{q}
+ {\jump{B_1}}_{i-\frac{1}{2},j} ^{q}  \Big) + \frac{1}{2\Delta y} \sum \limits_{q=1}^{Q} \omega_q \Big(  {\jump{B_2}}_{i,j+\frac{1}{2}}^{q}
+ {\jump{B_2}}_{i,j-\frac{1}{2}} ^{q} \Big), 
\end{equation}
where ${\jump{B_1}}_{i+\frac{1}{2},j}^{q}=B_{1,i+\frac{1}{2},j}^{+,q}-B_{1,i+\frac{1}{2},j}^{-,q}$ and 
${\jump{B_2}}_{i,j+\frac{1}{2}}^{q}=B_{2,i,j+\frac{1}{2}}^{q,-}-B_{2,i,j+\frac{1}{2}}^{q,-}$ are the jumps of  
the normal magnetic component 
 across the cell interface. 
Using the GQL approach with the linearity of $\varphi( \cdot; {\bm v}_*, {\bf B}_*) $ and the estimate \cref{eq:constrant2_kst} under the hypothesis of \cref{thm:high_MMHD},  
we obtain  
\begin{align}
	\varphi( \bar{\bf u}_{ij}^{n+1}; {\bm v}_*, {\bf B}_*) 
	> 2( \widehat \omega_1 - \lambda ) \varphi( {\bf \Pi} ; {\bm v}_*, {\bf B}_*) 
	-  \left[ \Delta t ( {\bm v}_* \cdot {\bf B}_* ) {\rm div}_{ij}  {\bf B} +  \widehat {\bf S}_{ij} \cdot {\bf n}_* \right]. \label{eq:constrant2_kst22}
\end{align}
Then the key is to carefully design $\widehat {\bf S}_{ij}$ to exactly offset the effect of 
${\rm div} _{ij} {\bf B}$ in \cref{eq:constrant2_kst22}, so that the resulting schemes \eqref{2MHDscheme_BP} become bound-preserving. 
Observing that for any $b \in \mathbb R$ and any ${\bf u} \in G$, 
\begin{equation}\label{eq:bvBSn}
	b( {\bm v}_* \cdot {\bf B}_* + {\bf S}({\bf u})\cdot {\bf n}_* ) 
= b ( {\bm v} - {\bm v}_* ) \cdot ({\bf B}-{\bf B}_*) 
\le |b|\rho^{1/2} \varphi ( {\bf u}; ; {\bm v}_*, {\bf B}_*),
\end{equation}
we devise 
\begin{equation}\label{MMHD-S}
	\begin{aligned}
		\widehat {\bf S}_{ij} & =  	   
		 \frac{\sigma_1}{2} 
		 \sum \limits_{q=1}^{Q} \omega_q 
		 \Big[ {\jump{B_1}}_{i+\frac{1}{2},j}^{q} {\bf S} ( {\bf u}_{i+\frac12,j}^{-,q} )
		 + {\jump{B_1}}_{i-\frac{1}{2},j} ^{q}  {\bf S} ( {\bf u}_{i-\frac12,j}^{+,q} ) \Big]
  \\
 & +  \frac{\sigma_2}{2}  \sum \limits_{q=1}^{Q} \omega_q \Big[ {\jump{B_2}}_{i,j+\frac{1}{2}}^{q} {\bf S} ( {\bf u}_{i,j+\frac12}^{q,-} )
	+ {\jump{B_2}}_{i,j-\frac{1}{2}} ^{q}  {\bf S} ( {\bf u}_{i,j-\frac12}^{q,+} ) \Big],
\end{aligned}
\end{equation}
such that the last term in \cref{eq:constrant2_kst22} satisfies 
\begin{align} \nonumber
	&  \Delta t ( {\bm v}_* \cdot {\bf B}_* ) {\rm div}_{ij}  {\bf B} +  \widehat {\bf S}_{ij} \cdot {\bf n}_* 
	\\ \nonumber
	& = 			 \frac{\sigma_1}{2} 
	\sum \limits_{q=1}^{Q} \omega_q 
	\Big[ {\jump{B_1}}_{i+\frac{1}{2},j}^{q} \Big( {\bm v}_* \cdot {\bf B}_* + {\bf S} ( {\bf u}_{i+\frac12,j}^{-,q} ) \cdot {\bf n}_* \Big)
	+ {\jump{B_1}}_{i-\frac{1}{2},j} ^{q}  \Big( {\bm v}_* \cdot {\bf B}_* + {\bf S} ( {\bf u}_{i-\frac12,j}^{+,q} ) \cdot {\bf n}_* \Big) \Big]
	\\ \nonumber
	& \quad +  \frac{\sigma_2}{2}  \sum \limits_{q=1}^{Q} \omega_q \Big[ {\jump{B_2}}_{i,j+\frac{1}{2}}^{q} \Big( {\bm v}_* \cdot {\bf B}_* + {\bf S} ( {\bf u}_{i,j+\frac12}^{q,-} ) \cdot {\bf n}_* \Big)
	+ {\jump{B_2}}_{i,j-\frac{1}{2}} ^{q}  \Big( {\bm v}_* \cdot {\bf B}_* + {\bf S} ( {\bf u}_{i,j-\frac12}^{q,+}  ) \cdot {\bf n}_* \Big) \Big]
	\\ \label{eq:keyIEQ}
	& \le \varepsilon \lambda \varphi( {\bf \Pi} ; {\bm v}_*, {\bf B}_*), 
\end{align}
where we use \cref{eq:LDF} in the equality and \cref{eq:bvBSn} in 
the inequality, and $\varepsilon = \max\{ \beta_{1}/\alpha_{1,n}, \beta_{2}/\alpha_{2,n} \}$ with $\beta_{1} = \max_{i,j,q} \big\{ \big|{\jump{B_1}}_{i+\frac{1}{2},j}^{q}\big| ({\rho_{i+\frac12,j}^{\pm,q}} )^{1/2} \big\}$ and 
$\beta_{2} = \max_{i,j,q} \big\{ \big|{\jump{B_2}}_{i,j+\frac{1}{2}}^{q} \big| ({\rho_{i,j+\frac12}^{q,\pm}} )^{1/2} \big\}$. 
Combining \cref{eq:constrant2_kst22} with \cref{eq:keyIEQ}, we obtain 
$$
\varphi( \bar{\bf u}_{ij}^{n+1}; {\bm v}_*, {\bf B}_*)  > 2( \widehat \omega_1 - \lambda - \varepsilon \lambda ) \varphi( {\bf \Pi} ; {\bm v}_*, {\bf B}_*) \ge 0, 
$$
under the CFL condition $(1+\varepsilon) \lambda \le \widehat \omega_1$. Notice that the first $n_c$ components of $\widehat {\bf S}_{ij}$ are zeros, which implies \cref{eq:constrant1_kst} in \cref{thm:high_MMHD} also holds for the modified schemes \cref{2MHDscheme_BP}. 
In summary, we obtain:
\begin{theorem}\label{thm:high_MMHD:modify}
	If for all $i$ and $j$, $\bar{\bf u}_{ij}^n \in G$ and the polynomial vector ${\bf U}_{ij}^n(x,y)$ satisfies 
	\cref{PPLimiterCondition} and ${\rm div} _{ij}^{{-}} {\bf B}=0$, 
	then, under the CFL condition $(1+\varepsilon) \lambda \le \widehat \omega_1$, the solution $\bar{\bf u}_{ij}^{n+1}$ of \cref{2MHDscheme_BP} is always preserved in $G_*$.
\end{theorem}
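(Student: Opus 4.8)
The plan is to lean entirely on the GQL representation \cref{eq:Gs-MMHD}, which reduces the membership $\bar{\bf u}_{ij}^{n+1}\in G_*=G$ to verifying three families of \emph{linear} constraints: $\bar{\bf u}_{ij}^{n+1}\cdot{\bf e}_k\ge 0$ for $0\le k<n_c$, $\bar{\bf u}_{ij}^{n+1}\cdot{\bf e}_{n_c}>0$, and $\varphi(\bar{\bf u}_{ij}^{n+1};{\bm v}_*,{\bf B}_*)>0$ for all ${\bm v}_*,{\bf B}_*\in\mathbb R^3$. First I would dispose of the mass-fraction and density constraints: since the first $n_c$ components of the source term $\widehat{\bf S}_{ij}$ in \cref{MMHD-S} are zero, the modified scheme \cref{2MHDscheme_BP} coincides with \cref{2MHDscheme} in precisely those components, so the conclusion \cref{eq:constrant1_kst} of \cref{thm:high_MMHD} carries over verbatim. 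Here I would note that the hypotheses of \cref{thm:high_MMHD} are met, because the stronger CFL condition $(1+\varepsilon)\lambda\le\widehat\omega_1$ implies $\lambda\le\widehat\omega_1$.

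The substance lies in the $\varphi$-constraint. By the linearity of $\varphi(\cdot;{\bm v}_*,{\bf B}_*)$, subtracting $\widehat{\bf S}_{ij}$ in \cref{2MHDscheme_BP} contributes exactly $-\widehat{\bf S}_{ij}\cdot{\bf n}_*$ to the functional, so starting from the estimate \cref{eq:constrant2_kst} of \cref{thm:high_MMHD} (valid because \cref{PPLimiterCondition} holds) yields \cref{eq:constrant2_kst22}. Next I would invoke the local divergence-free hypothesis ${\rm div}_{ij}^{-}{\bf B}=0$, which collapses the discrete divergence to the pure interface-jump form \cref{eq:LDF}, thereby decoupling it from the cell-average data. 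Inserting the source discretization \cref{MMHD-S} and regrouping so that each jump $\jump{B_\ell}$ at a given quadrature node pairs with the residual ${\bm v}_*\cdot{\bf B}_*+{\bf S}({\bf u})\cdot{\bf n}_*$ evaluated at the corresponding trace state gives the equality asserted in \cref{eq:keyIEQ}. Each residual is then controlled by the GQL inequality \cref{eq:bvBSn}, using that every trace state lies in $G$ by \cref{PPLimiterCondition} so that its $\varphi$ value is nonnegative; summing against the quadrature weights and applying the definitions of $\beta_1,\beta_2,\varepsilon$ produces the bound $\le\varepsilon\lambda\,\varphi({\bf \Pi};{\bm v}_*,{\bf B}_*)$. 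Combining \cref{eq:constrant2_kst22} with \cref{eq:keyIEQ}, and using ${\bf \Pi}\in G$ (hence $\varphi({\bf \Pi};{\bm v}_*,{\bf B}_*)>0$) together with $(1+\varepsilon)\lambda\le\widehat\omega_1$ to render the remaining coefficient nonnegative, gives $\varphi(\bar{\bf u}_{ij}^{n+1};{\bm v}_*,{\bf B}_*)>0$ for all ${\bm v}_*,{\bf B}_*$. The three constraint families then deliver $\bar{\bf u}_{ij}^{n+1}\in G_*=G$.

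The main obstacle is the \emph{exact} offsetting of the discrete divergence by the tailored source term. Two alignments must hold simultaneously: the jump form \cref{eq:LDF}, which is available only because ${\rm div}_{ij}^{-}{\bf B}=0$ removes the cell-average dependence, must match the termwise structure of \cref{MMHD-S} so that the spurious ${\bm v}_*\cdot{\bf B}_*$ pieces pair cleanly with the source traces; and the residual ${\bm v}_*\cdot{\bf B}_*+{\bf S}({\bf u})\cdot{\bf n}_*$ must be absorbable into the \emph{single} GQL functional $\varphi$. This absorption is exactly where the free auxiliary variables earn their keep: identity \cref{eq:bvBSn} recasts the residual as the bilinear form $({\bm v}-{\bm v}_*)\cdot({\bf B}-{\bf B}_*)$, which is then bounded by $\rho^{1/2}\varphi({\bf u};{\bm v}_*,{\bf B}_*)$ --- an estimate that would be inaccessible without the linearization GQL supplies. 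Checking the algebraic identity behind \cref{eq:bvBSn} and carrying out the clean regrouping is the delicate computational heart of the argument, while everything else reduces to bookkeeping modulo the CFL threshold.
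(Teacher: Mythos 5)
Your proposal is correct and follows essentially the same route as the paper, whose proof is exactly the chain you describe: deduce \cref{eq:constrant2_kst22} from \cref{eq:constrant2_kst} by linearity of $\varphi$, use ${\rm div}_{ij}^{-}{\bf B}=0$ to reduce the discrete divergence to the jump form \cref{eq:LDF}, pair the jumps with the source traces and absorb each residual via \cref{eq:bvBSn} to obtain \cref{eq:keyIEQ}, and conclude $\varphi(\bar{\bf u}_{ij}^{n+1};{\bm v}_*,{\bf B}_*) > 2(\widehat\omega_1-\lambda-\varepsilon\lambda)\varphi({\bf \Pi};{\bm v}_*,{\bf B}_*)\ge 0$ under $(1+\varepsilon)\lambda\le\widehat\omega_1$, with the first $n_c$ (linear) constraints inherited from \cref{thm:high_MMHD} since those components of $\widehat{\bf S}_{ij}$ vanish. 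No gaps; the treatment of the auxiliary variables, the role of \cref{PPLimiterCondition} in placing the trace states in $G$, and the CFL bookkeeping all match the paper's argument.
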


\Cref{thm:high_MMHD:modify} indicates that, if we use the scaling limiter in \cref{sec:limiter} to enforce  \cref{PPLimiterCondition} and a locally divergence-free technique to ensure ${\rm div} _{ij}^{{-}} {\bf B}=0$, then the schemes \cref{2MHDscheme_BP} with \cref{MMHD-S} are bound-preserving.  The bounds are also preserved if a high-order strong-stability-preserving 
time discretization \cite{GottliebShuTadmor2001} is used to replace the Euler forward method.

\section{Experimental results}
\label{sec:experiments}
This section gives two highly demanding numerical examples to further demonstrate 
our theoretical analysis as well as the robustness and effectiveness of 
the bound-preserving schemes designed via GQL in \cref{sec:highBP} for the 2D multicomponent MHD. 
We use the proposed bound-preserving third-order locally divergence-free discontinuous Galerkin method for spatial discretization. 
As the tests involve strong discontinuities, 
the locally divergence-free WENO limiter \cite{ZhaoTang2017} is also employed in some trouble cells adaptively detected by the indicator of \cite{Krivodonova}.
The third-order strong-stability-preserving Runge-Kutta method \cite{GottliebShuTadmor2001}  is adopted   for time discretization, with the CFL number set as $0.15$. 
 

\begin{figure}[htbp]
	\centering
	{\includegraphics[width=0.24\textwidth]{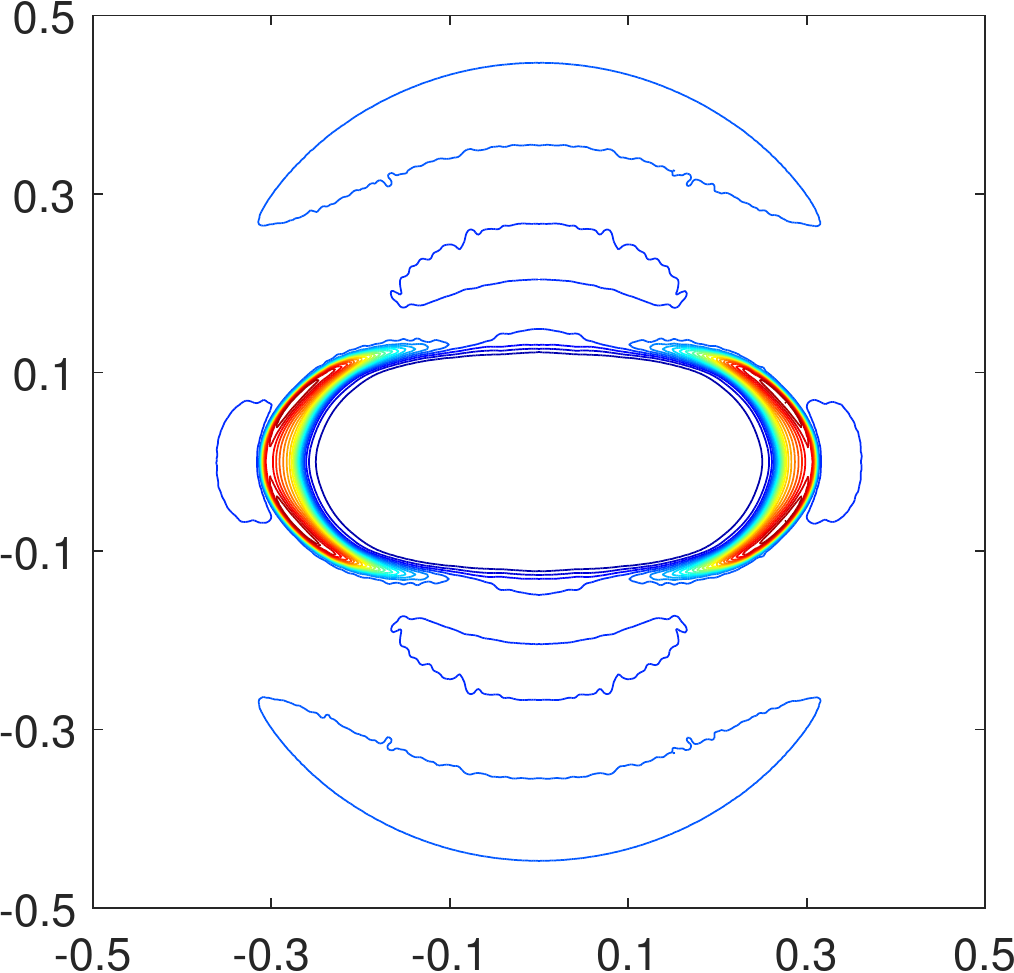}}
	{\includegraphics[width=0.24\textwidth]{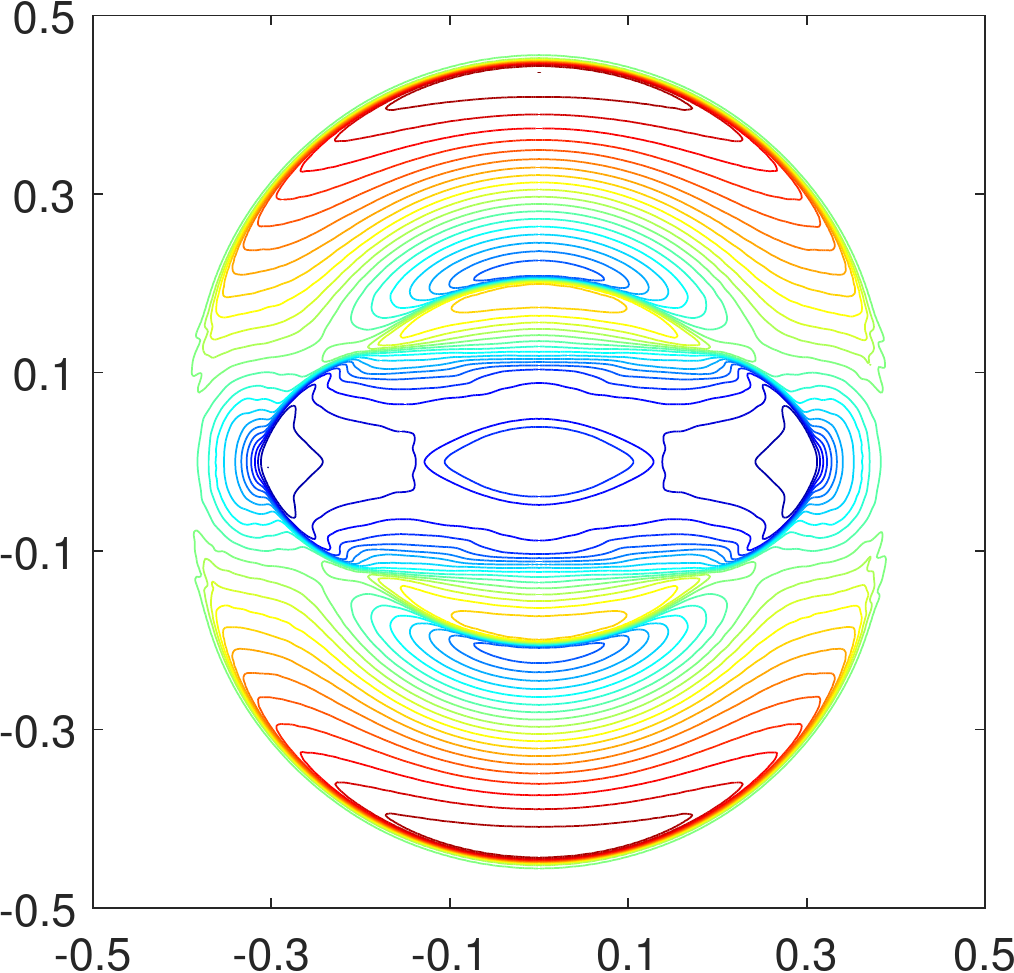}}
	{\includegraphics[width=0.24\textwidth]{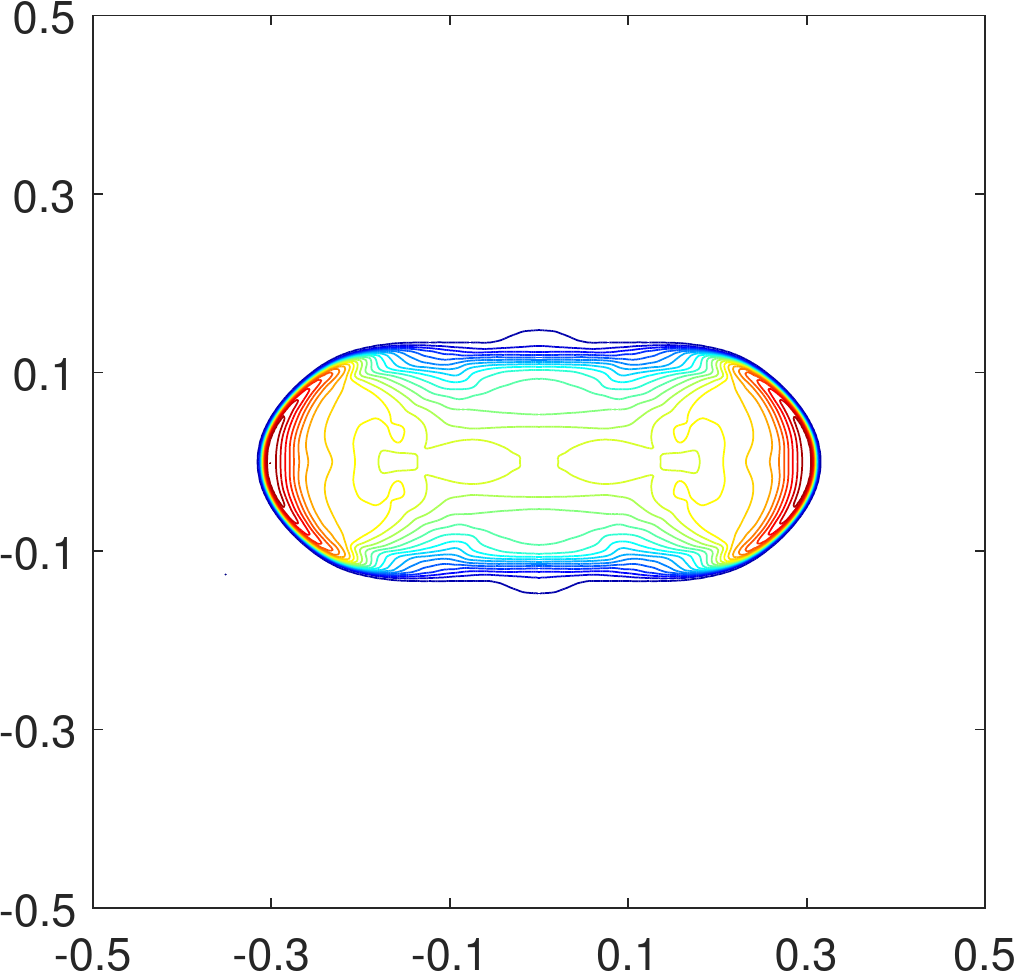}}
	{\includegraphics[width=0.24\textwidth]{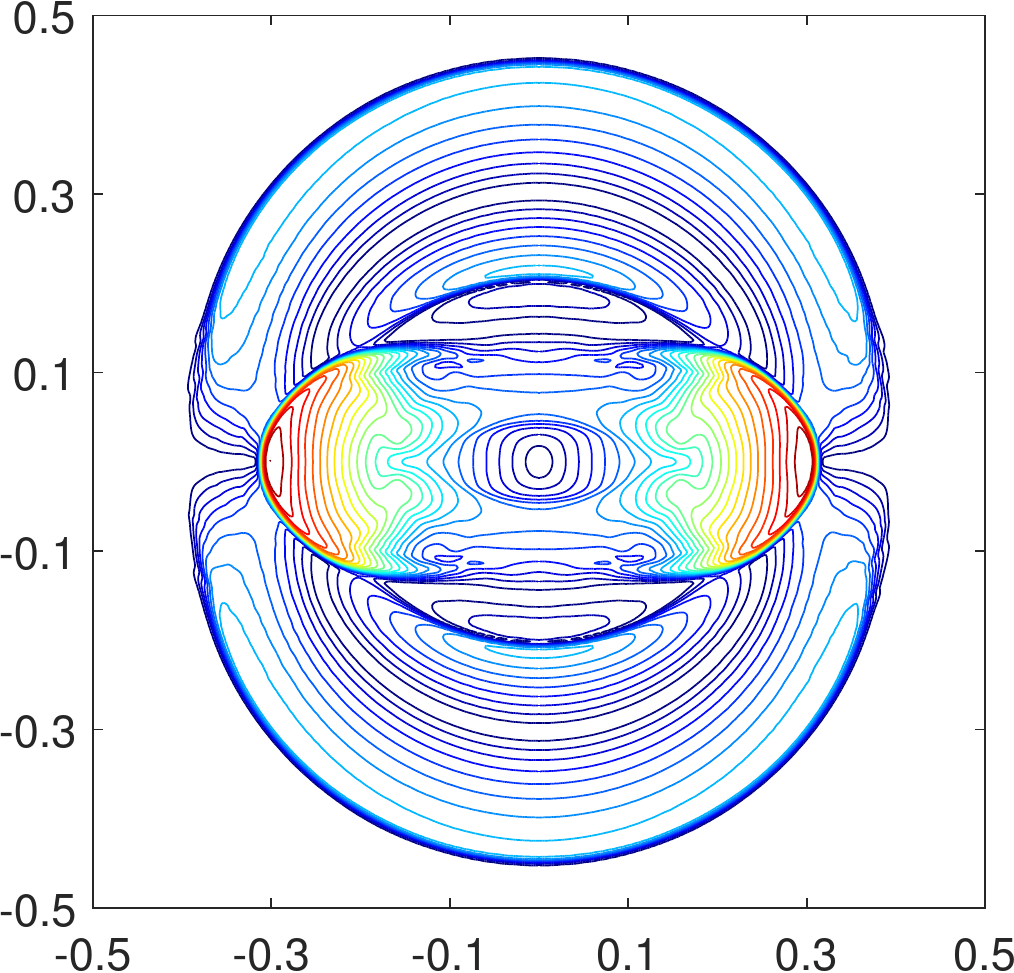}}
	\captionsetup{belowskip=-5pt}
	\caption{The contour plots of $\rho$, $p_m$, $p$, and $\| {\bm v} \|$ (from let to right) for the blast problem at $t=0.01$.}
	\label{fig:BL}
\end{figure}

\begin{example}[Blast problem]\label{ex:BL} 
	This test simulates a benchmark MHD problem in the domain $[-0.5,0.5]^2$ with outflow boundary conditions.  
	The setup is similar to that in \cite{BalsaraSpicer1999} except for a fluid mixture with 
	$n_c=2$, $C_{v_1}=2.42$, $C_{v_2}=0.72$, $\Gamma_1=5/3$, and $\Gamma_2=1.4$.  Initially, 
	the fluid is stationary, with $(\rho,p,Y_1,Y_2)=(1,1000,1,0)$ in the explosion region ($x^2+y^2 \le 0.01$) and  $(1,0.1,0,1)$ in the ambient region   
	($x^2+y^2 > 0.01$). 
	The magnetic field $\bf B$ is initialized as $(100/\sqrt{4\pi},0,0)$. 
	Due to the 
	large jump in $p$ and the strong magnetic field, negative numerical $p$ can be easily produced and often cause failure of the numerical simulations. 
	\cref{fig:BL} presents the contour plots of the density $\rho$, the magnetic pressure $p_m =\frac12 \| {\bf B} \|^2$, the thermal pressure $p$, and the velocity magnitude $\| {\bm v} \|$ computed by the proposed bound-preserving discontinuous Galerkin method with $400 \times 400$ uniform cells. We observe the flow structures are well captured, and our method is highly robust and always preserves the bound principles \cref{eq:G-MMHD} in the whole simulation. 
\end{example}

\begin{figure}[htb]
	\centering
	{\includegraphics[width=0.28\textwidth]{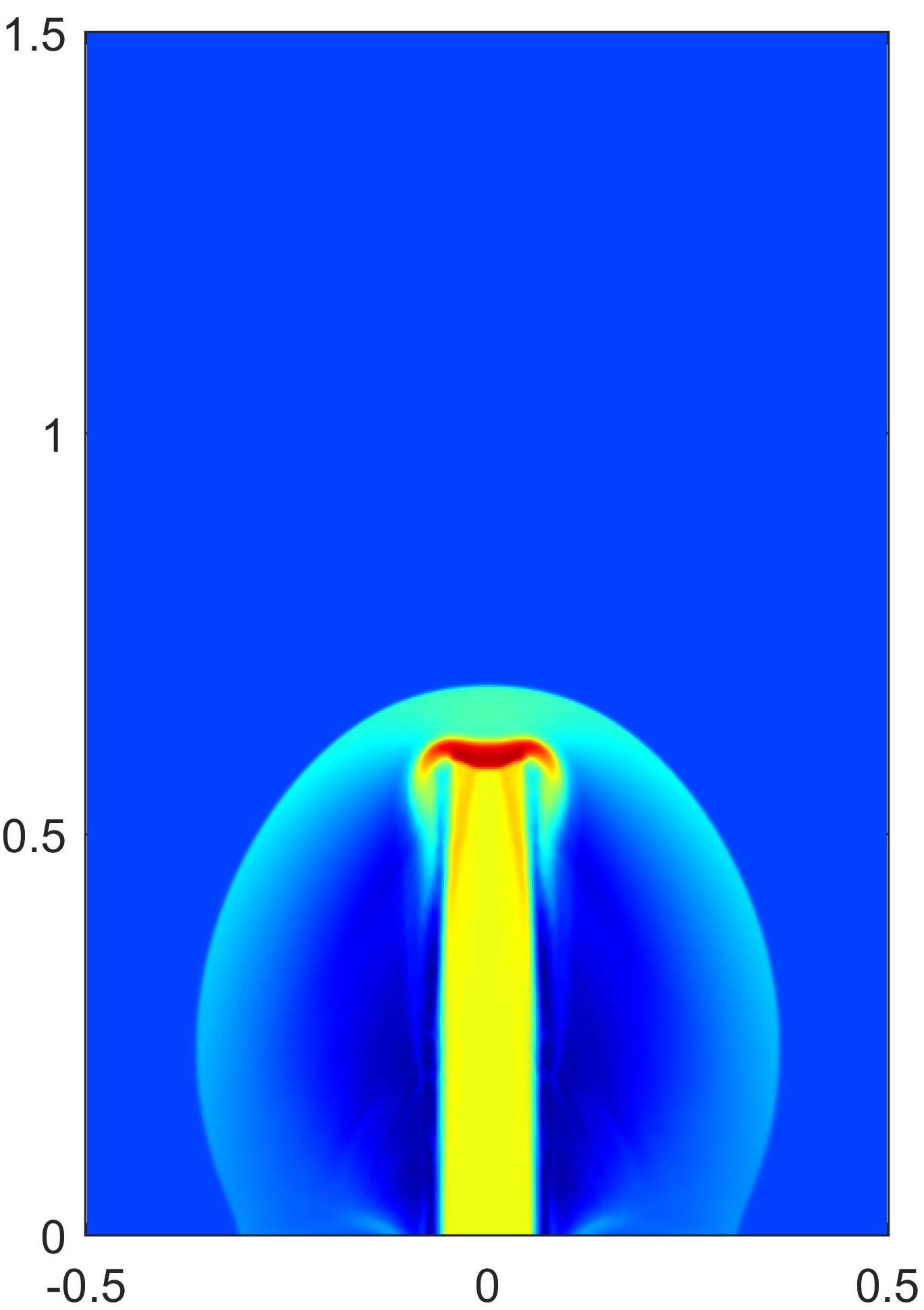}}~~~~
	{\includegraphics[width=0.28\textwidth]{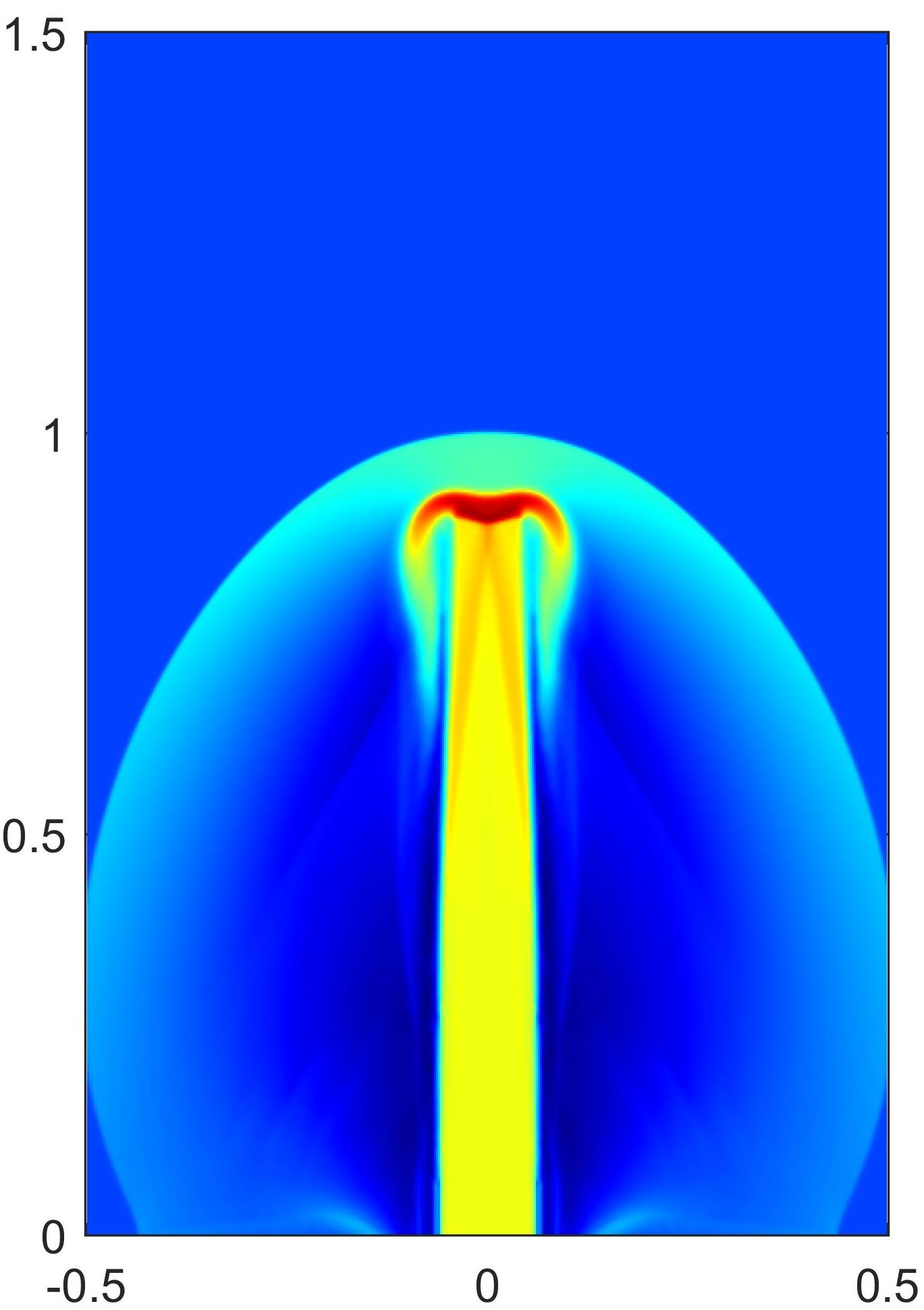}}~~~~
	{\includegraphics[width=0.28\textwidth]{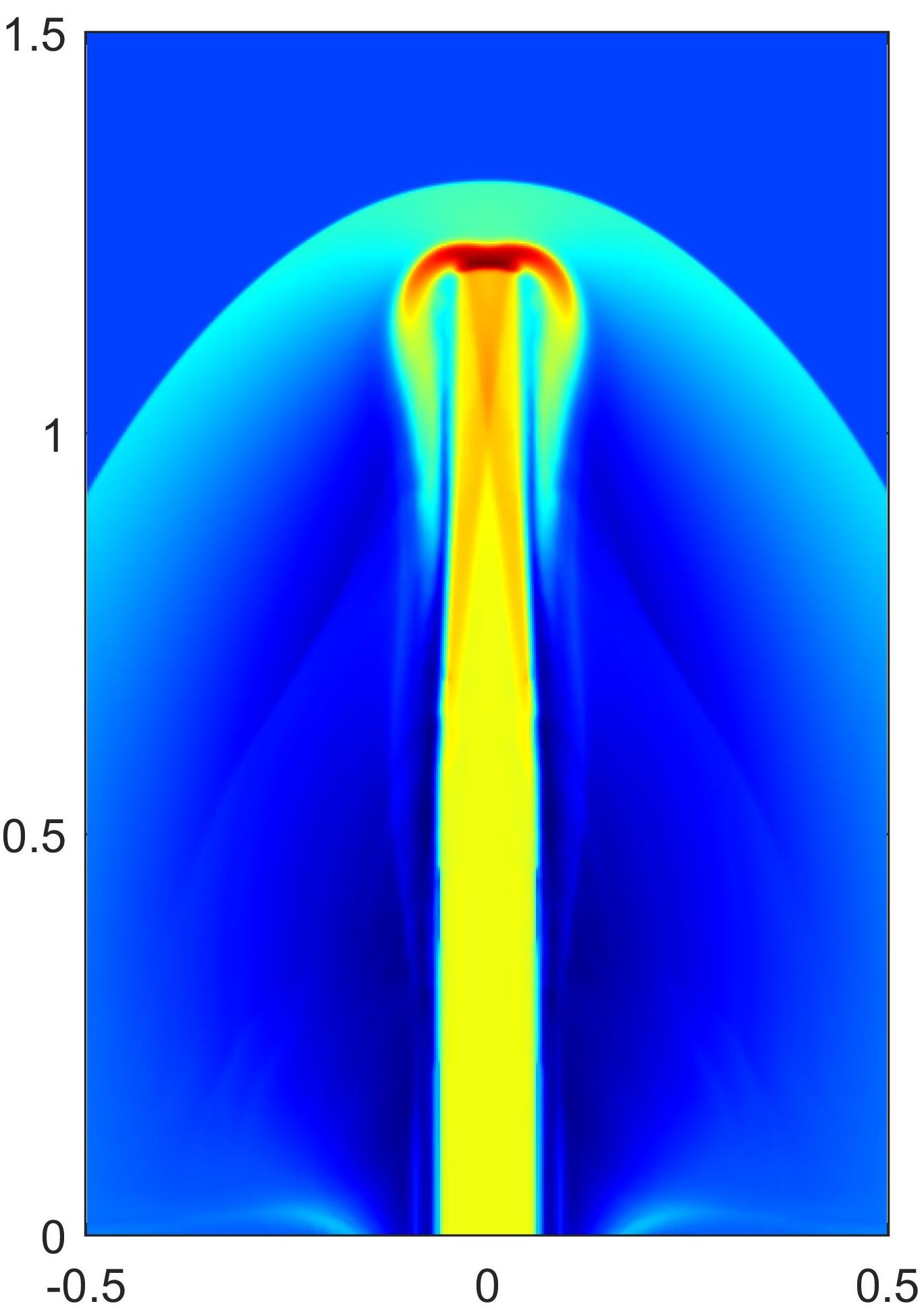}}
	\captionsetup{belowskip=-5pt}
	\caption{\small The plots of $\log(\rho)$ for the jet problem. From left to right: $t=0.001$, $0.0015$ and $0.002$.} 
	\label{fig:jet2}
\end{figure}

\begin{example}[Astrophysical jet] This test simulates a high-speed MHD jet flow  
	in the domain $[-0.5,0.5]\times[0,1.5]$ with $n_c=2$, $C_{v_1}=0.72$, $C_{v_2}=2.42$, $\Gamma_1=1.4$, and $\Gamma_2=5/3$. The domain is initially filled with static fluid 
	with 
	$(\rho,p,Y_1,Y_2)=(0.14,1,0,1)$. The inflow jet condition is fixed on boundary 
	$\{ |x|<0.05, y=0 \}$ with $(\rho,p,Y_1,Y_2)=(1.4,1,1,0)$ and ${\bm v}=(0, 800, 0)$, while the outflow conditions are specified on the other boundaries. 
There is a strong magnetic field $\bf B$ initialized as $(0,\sqrt{4000},0)$, which makes this test more challenging. 
Our simulation is based on 
the proposed bound-preserving method with $200 \times 600$ uniform cells in $[0,0.5]\times[0,1.5]$. 
The numerical results are shown \cref{fig:jet2}. The flow pattern is captured with high resolution and similar to the single-component MHD case reported in \cite{WuShu2018,WuShu2019}. In such an extreme test, our bound-preserving method exhibits good robustness. However, if the proposed scaling limiter is not used to enforce \eqref{PPLimiterCondition}, or if 
the locally divergence-free technique is not employed to ensure  ${\rm div} _{ij}^{{-}} {\bf B}=0$, or if the proposed source term \cref{MMHD-S} is dropped, the resulting 
method even with the WENO limiter is not bound-preserving and would fail quickly due to  nonphysical numerical solutions out of the bounds. This confirms our theoretical analyses and 
the importance of the proposed conditions and techniques. 
\end{example}

\section{Conclusions}
\label{sec:conclusions}
We have systematically proposed a novel and general framework, 
called geometric quasilinearization (GQL), for studying bound-preserving problems with nonlinear constraints. GQL skillfully transfers all nonlinear constraints into linear ones, via properly introducing some free auxiliary variables independent of the system variables. 
We have established the fundamental principle and general theory of GQL, and provided three simple methods for constructing GQL representations. 
The GQL approach equivalently casts the nonlinear bound-preserving problems into preserving the positivity of linear functions, thereby opening up a new effective way for 
bound-preserving study. 
Several examples have been provided to demonstrate the effectiveness and advantages of the GQL approach in addressing nonlinear bound-preserving problems that are highly challenging and could not be easily handled by direct or traditional approaches.  
Besides the examples in this paper, recently the GQL approach also achieved successes in finding (high-order) bound-preserving schemes for several complicated PDE systems in   \cite{Wu2017a,WuShu2018,WuTangM3AS,wu2021minimum,WuShu2019,WuShu2020NumMath}.  

As the proposed GQL framework 
is not restricted to the specific forms of the PDEs, it  
applies to general time-dependent PDE systems 
that possess convex invariant regions with nonlinear constraints. Moreover, 
it can be used in conjunction with 
the well-developed limiters in \cite{zhang2010,zhang2010b,Xu2014,Hu2013} to design high-order bound-preserving schemes. 
It can be expected the GQL approach will be useful for addressing more challenging bound-preserving problems for a variety of PDEs in the future.

\appendix
\section{Proof of Theorem \ref{thm:I=1}}\label{sec:proofSpeicalCase} 	
The proof is divided into two steps.

	{\tt (\romannumeral1) Prove that $G \subseteq G_*$.}  
	For any ${\bf u}_* \in \partial  G$, the hyperplane $({\bf u} - {\bf u}_*) \cdot {\bf n}_{*} =0$ supports the convex region $G$ at ${\bf u}_*$. Thus we have 
	\begin{equation}\label{key632}
		G \subseteq \{{\bf u} \in \mathbb R^N: \left ( {\bf u} - {\bf u}_*,  {\bf n}_{*} \right ) \ge 0~~\forall {\bf u}_* \in \partial  G \}.
	\end{equation}
	If $G$ is closed, then \cref{key632} means $G \subseteq G_*$. Next, we assume $G$ is open and show $G \subseteq G_*$ by contradiction. 
	Assume that 
	\begin{equation}\label{key116}
		\mbox{there exists ${\bf u}_0 \in G$ but ${\bf u}_0 \notin G_*$.}
	\end{equation}
	Then, according to \cref{key632}, there exists ${\bf u}_* \in \partial  G$ such that $( {\bf u}_0 - {\bf u}_*) \cdot {\bf n}_{*}  = 0$. 
	Since $G$ is open, there exists $\delta >0$ such that 
	$\Omega_\delta :=\{ {\bf u} \in \mathbb R^N: \| {\bf u}-{\bf u}_0 \| < \delta  \} \subset G $. We take 
	$ 
		{\bf u}_\delta :={\bf u}_0 -\frac{ \delta  }{2 \| {\bf n}_{*} \|} {\bf n}_{*} \in \Omega_\delta. 	
	$ 
	Then ${\bf u}_\delta \in G$. However, using $( {\bf u}_0 - {\bf u}_*) \cdot {\bf n}_{*}  = 0$ gives 
	\begin{equation*}
		( {\bf u}_\delta - {\bf u}_*) \cdot {\bf n}_{*} 
		= \left ( {\bf u}_0 - {\bf u}_* -\frac{ \delta  }{2 \| {\bf n}_{*} \|} {\bf n}_{*} \right) \cdot 
		{\bf n}_{*} = -\frac{\delta}{2} \| {\bf n}_{*} \| <0,
	\end{equation*}
	which contradicts \cref{key632} and ${\bf u}_\delta \in G$. 
	Thus the assumption \cref{key116} is incorrect, and we have $G \subseteq G_*$.

	{\tt (\romannumeral2) Prove that $G_* \subseteq G$.}  We first show that $G_* \subseteq {\rm cl}(G)$ by contradiction. 
	Assume that 
	\begin{equation}\label{key679}
		\mbox{there exists ${\bf u}_0 \in G_*$ but ${\bf u}_0 \notin {\rm cl}(G)$.} 
	\end{equation}
	According to the theory of convex optimization \cite{boyd2004convex}, 
	the minimum of the convex function $\zeta ({\bf u}):=\| {\bf u} - {\bf u}_0 \|^2$ over the closed convex region ${\rm cl}(G)$ is attained at certain boundary point ${\bf u}_* \in \partial G$. 
	Let $\widehat {\bf u}$ be an arbitrary interior point of $G$. 
	Thanks to the convexity ${\rm cl}(G)$, one has ${\bf u}_\lambda :=\lambda \widehat {\bf u} + (1-\lambda) {\bf u}_* \in {\rm cl}(G)$ for any $\lambda \in [0,1]$. 
	We then know that the quadratic function 
	\begin{equation*}
		\widehat \zeta (\lambda) := \zeta ({\bf u}_\lambda) 
		= \lambda^2 \left \| \widehat {\bf u} - {\bf u}_* \right\|^2  + 2 \lambda \left (  \widehat {\bf u} - {\bf u}_* \right) \cdot \left( {\bf u}_* - {\bf u}_0 \right )  + \| {\bf u}_* - {\bf u}_0 \|^2
	\end{equation*}
	attains its minimum over $[0,1]$ at $\lambda =0$. This implies  $\left (  \widehat {\bf u} - {\bf u}_* \right) \cdot \left( {\bf u}_* - {\bf u}_0 \right ) \ge 0$, for an arbitrary interior point $\widehat {\bf u}$ of $G$. 
	Thus ${\rm int} (G) \subseteq \{ {\bf u}: \left (  {\bf u} - {\bf u}_* \right) \cdot \left( {\bf u}_* - {\bf u}_0 \right ) \ge 0 \} =: H_*^+ $, where $H_*^+$ is a closed halfspace. 
	It follows that 
	$H_*^+$ is a supporting halfspace to $G$, and ${\bf u}_* - {\bf u}_0 $ is an inward-pointing normal vector of $G$ at ${\bf u}_*$.  
	Because $\partial  G$ is smooth, there exists $\mu > 0$ such that ${\bf n}_{*} = \mu ( {\bf u}_* - {\bf u}_0 )$, which implies 
		$(   {\bf u}_0 - {\bf u}_* ) \cdot {\bf n}_{*} 
		= - \mu \left \| {\bf u}_0 - {\bf u}_* \right \|^2 < 0.$ 
	This contradicts the assumption ${\bf u}_0 \in G_*$. Thus the assumption \cref{key679} is incorrect, and we have $G_* \subseteq {\rm cl}(G)$. 
	If $G$ is closed, then we obtain $G_* \subseteq G$. 
	If $G$ is open, then $\partial G \cap G_* = \emptyset$, which along with 
	$G_* \subseteq {\rm cl}(G)$
	yields  $G_* \subseteq G$. 
	
	In summary, we have $G=G_*$, and the proof is completed. 

\section{A simple scaling limiter to enforce \cref{PPLimiterCondition}}\label{sec:limiter} 
The condition \cref{PPLimiterCondition} is not always automatically satisfied by the polynomial vector ${\bf U}_{ij}^n(x,y)$ of the high-order schemes. If this happens, the following limiter is used to modify ${\bf U}_{ij}^n(x,y)$ into $\widetilde {\bf U}_{ij}^n(x,y)$ such that $\widetilde {\bf U}_{ij}^n(x,y)$ satisfies \cref{PPLimiterCondition}. Define $\mathbb Q_{ij} = \{( \widehat x_i^{(\beta)}, y_j^{(q)} ), (  x_i^{(q)}, \widehat y_j^{(\beta)} )~ \forall \beta,q\}$ as the set of all the points involved in \cref{PPLimiterCondition}. Since the limiter is performed separately for each cell, the subscripts $ij$ and superscript $n$ of all quantities are omitted below for convenience. 
First, modify the density as 
$$
\widehat \rho (x,y) = \overline \rho + \theta_1 ( \rho (x,y) - \overline \rho  ), \qquad \theta_1 := ( \overline \rho -\epsilon_1 )/\big( \overline \rho - \min_{ (x,y) \in \mathbb Q_{ij} } \rho (x,y) \big),
$$
where 
$\epsilon_1$ is a small positive number and may be taken as $\min \{10^{-13},\overline \rho\}$. Define $\mathbb S_{k}=\{ (x,y) \in \mathbb Q_{ij}: { \rho Y_k } (x,y) \le 0 \}$.  Then, modify the mass fractions \cite{du2019high} as
$$
\widehat{ \rho Y_k } (x,y) = { \rho Y_k } (x,y) + \theta_2 \left( 
\frac{ \overline{\rho Y_k} }{ \overline \rho } \widehat \rho (x,y) - { \rho Y_k } (x,y) 
\right),\qquad 1\le k \le n_c-1, 
$$
where $\theta_2 = \max_{1\le k \le n_c} \max_{(x,y)\in \mathbb S_{k}} \{ 
\frac{ - { \rho Y_k } (x,y)  }{ \overline{\rho Y_k} \widehat \rho (x,y)/ \overline \rho -  { \rho Y_k } (x,y)  }
\}$ with $\rho Y_{n_c} = \widehat \rho - \sum_{k=1}^{n_c-1} \rho Y_k$. 
Denote $\widehat {\bf U}=( \widehat{ \rho {\bf Y} }, \widehat {\rho}, {\bm m}, {\bf B}, E )^\top$. Finally, modify $\widehat {\bf U}$ to enforce the positivity of 
 $g({\bf U})=E - \frac12 ({ \|{\bm m}\|^2 }/\rho +{ \|{\bf B}\|^2 })$ by  
\begin{equation*}
\widetilde {\bf U}(x,y) = \overline {\bf U} + \theta_3 ( \widehat {\bf U}(x,y) - \overline {\bf U}  ), \qquad \theta_3 :=  ( g( \overline {\bf U} ) - \epsilon_2 )/\big(  g( \overline {\bf U}  ) - \min_{(x,y)\in \mathbb Q_{ij}} g( \widehat {\bf U}(x,y) ) \big),
\end{equation*}
where $\epsilon_2$ is a small positive number and may be taken as $\min \{10^{-13}, g( \overline {\bf U}  )\}$. Note that the pressure function $p({\bf U})$ in \cref{MMHDdef:p} is generally not concave so we use the concave function $g({\bf U})$ instead of $p({\bf U})$. It can be verified that the limited solution $\widetilde {\bf U}(x,y) \in G$ for all 
$(x,y)\in \mathbb Q_{ij}$ and its cell average equals $\overline {\bf U}$. Such type of limiters do not lose the high-order accuracy, as demonstrated in  \cite{zhang2010,zhang2010b,ZHANG2017301}.  
 

\renewcommand\baselinestretch{0.9638}

\bibliographystyle{siamplain}
\bibliography{references_short}

\end{document}